\theoremstyle{plain}
\newtheorem{theorem}{Theorem}[section]
\newtheorem{corollary}[theorem]{Corollary}
\newtheorem{proposition}[theorem]{Proposition}
\newtheorem{claim}[theorem]{Claim}
\theoremstyle{definition} 
\newtheorem{definition}[theorem]{Definition}
\newtheorem{example}[theorem]{Example}
\newtheorem{remark}[theorem]{Remark}
\theoremstyle{remark}
\newtheorem{problem}[theorem]{Problem}
\newtheorem{equa}[theorem]{Equality}
\newtheorem{exercise}[theorem]{Exersice}
\renewcommand{\phi}{\varphi}
\newcommand{\initial}\lessdot
\def\?{?\vadjust

{\vbox to 0pt{\vskip-7pt\hbox to 1.1\hsize{\hfill\huge ?!}}}}
\newcommand{\be}{\begin{enumerate}}
\newcommand{\ee}{\end{enumerate}}
\renewcommand{\epsilon}{\varepsilon}
 \def\nfork{\setbox0\hbox{$\bigcup$}%
 \setbox1=\hbox to \wd0{\hfil\vrule width 0.7pt depth 2pt height 7.5pt\hfil}%
 \wd1=0cm\relax\box1\box0}
\begin{document}

\title{Hereditary Konig Egervary Collections}

\author{Adi Jarden}
\email[Adi Jarden]{jardena@ariel.ac.il}
\address{Department of Mathematics.\\ Ariel University \\ Ariel, Israel}

\maketitle

\tableofcontents

\today

\begin{abstract}
Let $G$ be a simple graph with vertex set $V(G)$. A subset $S$ of $V(G)$ is independent if no two vertices from $S$ are adjacent. The graph $G$ is known to be a Konig-Egervary (KE in short) graph if $\alpha(G) + \mu(G)= |V(G)|$, where $\alpha(G)$ denotes the size of a maximum independent set and $\mu(G)$ is the cardinality of a maximum matching. Let $\Omega(G)$ denote the family of all maximum independent sets.

A collection $F$ of sets is an hke collection if $|\bigcup \Gamma|+|\bigcap \Gamma|=2\alpha$ holds for every subcollection $\Gamma$ of $F$.  
We characterize an hke collection and  invoke new characterizations of a KE graph. 

We prove the existence and uniqueness of a graph $G$ such that $\Omega(G)$ is a maximal hke collection. It is a bipartite graph. As a result, we solve a problem of Jarden, Levit and Mandrescu \cite{jlm}, proving that $F$ is an hke collection if and only if it is a subset of $\Omega(G)$ for some graph $G$ and $|\bigcup F|+|\bigcap F|=2\alpha(F)$. 

Finally, we show that the maximal cardinality of an hke collection $F$ with $\alpha(F)=\alpha$ and $|\bigcup F|=n$ is $2^{n-\alpha}$.
\end{abstract}

\section{Introduction}
In this paper we study hereditary KE collections: collections of sets, $F$, of a fixed cardinality $\alpha$, such that the equality
$$|\bigcup \Gamma|+|\bigcap \Gamma|=2\alpha$$
holds for every non-empty subset $\Gamma$ of $F$. 

The following theorem is a restatement of \cite[Theorem 2.6]{dam}:
\begin{theorem}\label{the main theorem of dam}
$G$ is a KE graph if and only if for some hke subcollection $\Gamma$ of $\Omega(G)$ there is a matching $M:V(G)-\bigcup \Gamma \to \bigcap \Gamma$.
\end{theorem}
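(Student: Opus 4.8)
The plan is to prove the two implications separately; the forward one is nearly immediate, and the reverse one carries all the weight. \emph{Forward implication:} assume $G$ is a KE graph. By the classical Deming--Sterboul theorem, $G$ has a maximum independent set $S$ together with a matching sending every vertex of $V(G)\setminus S$ to a vertex of $S$. Take $\Gamma=\{S\}$: then $\bigcup\Gamma=\bigcap\Gamma=S$, the one-element family $\{S\}$ is vacuously hke since $|S|+|S|=2\alpha$, and the matching just produced is precisely a matching $M\colon V(G)\setminus\bigcup\Gamma\to\bigcap\Gamma$. So nothing beyond Deming--Sterboul is needed here.

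\emph{Reverse implication:} assume $\Gamma\subseteq\Omega(G)$ is hke and $M\colon V(G)\setminus\bigcup\Gamma\to\bigcap\Gamma$ is a matching; write $\alpha=\alpha(G)$, $n=|V(G)|$ and $W=\bigcup\Gamma\setminus\bigcap\Gamma$. The crux is the following \emph{Key Lemma}: if $\Gamma\subseteq\Omega(G)$ is hke, then $G[W]$ has a perfect matching $M'$. Granting it, $M$ and $M'$ are vertex-disjoint (the first lives on $(V(G)\setminus\bigcup\Gamma)\cup\bigcap\Gamma$, the second on $\bigcup\Gamma\setminus\bigcap\Gamma$), so $M\cup M'$ is a matching; applying the hke equality to $\Gamma$ itself gives $|\bigcap\Gamma|=2\alpha-|\bigcup\Gamma|$, hence $|W|=2|\bigcup\Gamma|-2\alpha$ and $|M\cup M'|=(n-|\bigcup\Gamma|)+|W|/2=(n-|\bigcup\Gamma|)+(|\bigcup\Gamma|-\alpha)=n-\alpha$. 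Since $V(G)\setminus S$ is a vertex cover of size $n-\alpha$ for every maximum independent set $S$, and a matching has at most as many edges as a vertex cover has vertices, $\mu(G)\le n-\alpha$; together with $M\cup M'$ this gives $\mu(G)=n-\alpha$, i.e.\ $\alpha(G)+\mu(G)=|V(G)|$, so $G$ is a KE graph.

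It remains to prove the Key Lemma, and this is the one genuine obstacle. I would induct on $|\Gamma|$, with \emph{heredity} used essentially. The case $|\Gamma|\le 1$ is trivial, since then $W=\emptyset$. For $|\Gamma|=2$, $W$ is the symmetric difference $S_1\triangle S_2$, and $G[W]$ has a perfect matching by the classical fact about two maximum independent sets: the sides $S_1\setminus S_2$ and $S_2\setminus S_1$ have equal size, and a Hall-violating set $X\subseteq S_1\setminus S_2$ --- being a subset of $S_1$, hence independent --- would make $(S_2\setminus N(X))\cup X$ an independent set of size larger than $\alpha$. For $|\Gamma|\ge 3$, fix $S_k\in\Gamma$ and put $\Gamma'=\Gamma\setminus\{S_k\}$; by heredity $\Gamma'$ is hke, so by induction $G[\bigcup\Gamma'\setminus\bigcap\Gamma']$ has a perfect matching. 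One then has $W=(\bigcup\Gamma'\setminus\bigcap\Gamma')\sqcup(S_k\setminus\bigcup\Gamma')\sqcup(\bigcap\Gamma'\setminus S_k)$, and subtracting the hke equalities for $\Gamma$ and for $\Gamma'$ forces $|S_k\setminus\bigcup\Gamma'|=|\bigcap\Gamma'\setminus S_k|$, so it remains to match these two equal-sized new blocks, re-routing the old matching along alternating paths if necessary. This step is the delicate one, and I expect it to be the main technical hurdle: $G[W]$ is in general not bipartite, so Hall's theorem cannot be applied to $W$ globally. The way around this is to argue region by region --- each ``membership pattern'' region of $W$ lies inside a single member of $\Gamma$, so it is independent and hence controls its neighbourhood inside every other member --- and it is precisely the heredity of $\Gamma$ that forces the region sizes to balance so that the local Hall estimates can be assembled into the required perfect matching.
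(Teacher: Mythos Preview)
The paper does not supply a proof of this theorem: it is introduced as ``a restatement of \cite[Theorem 2.6]{dam}'' and used thereafter as background, so there is no in-paper argument to compare your attempt against.

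On the merits: your forward direction is correct and as economical as possible. The skeleton of your reverse direction is also right --- a perfect matching on $W=\bigcup\Gamma\setminus\bigcap\Gamma$, combined with $M$, yields a matching of size $n-\alpha$ and hence forces $G$ to be KE --- and your Key Lemma is exactly the statement that carries the weight. But you have not proved the Key Lemma. The cases $|\Gamma|\le 2$ are fine; the inductive step is not. Heredity does give the numerical equality $|S_k\setminus\bigcup\Gamma'|=|\bigcap\Gamma'\setminus S_k|$, but cardinality equality is not a matching: you still have to produce the edges, and that requires the hypothesis that each $S_i$ is a \emph{maximum} independent set, which you invoke only in the base case. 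The phrases ``re-routing the old matching along alternating paths if necessary'' and ``the local Hall estimates can be assembled into the required perfect matching'' describe an intention, not an argument --- you have not exhibited a single augmenting path from $S_k\setminus\bigcup\Gamma'$ to $\bigcap\Gamma'\setminus S_k$, nor said why one must exist, nor identified a bipartition of the non-bipartite graph $G[W]$ on which a Hall condition could even be formulated. As written, the inductive step is a genuine gap; since you yourself flag it as ``the main technical hurdle,'' what you have is a correct reduction to the Key Lemma together with a sketch that stops at the first hard step.
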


In most of the sections of this paper, the main issue is the hke collections rather than graphs. In Section 2, we present characterizations for an hke collection and invoke a new characterization for a KE graph. Definition \ref{definition the typical collection} presents the typical example of a maximal hke collection. Theorem \ref{maximal iff has 2^alpha elements} characterize a maximal hke collection by its cardinality. By Corollary \ref{the typical example is the unique one}, the typical example for $\alpha$ (that is defined in Definition \ref{definition the typical collection}) is the unique maximal hke collection, where $\alpha$ is given. By Theorem \ref{bipartite}, there is a unique KE graph (actually, a bipartite graph), $G$, such that $\Omega(G)$ is a maximal hke collection. By Theorem \ref{old KE iff hke}, a collection of sets $F$ is an hke collection if and only if it is included in $\Omega(G)$ for some graph $G$ and $|\bigcup F|+|\bigcap F|=2\alpha(F)$. By Theorem \ref{corollary a(alpha,n)}, the maximal cardinality of an hke collection, $F$, with $\alpha(F)=\alpha$ and $|\bigcup F|=n$ is $2^{n-\alpha}$.

\section{Konig Egervary Collections}
 
In this section, we define a relevant collection, a KE collection and an hke collection. We charecterize hke collections and invoke new characterizations for a KE graph.

\begin{definition}
A \emph{relevant collection} is a finite collection of finite sets such that the number of elements in each set in $F$ is a constant positive integer, denoted $\alpha(F)$. When $F$ is clear from the context, we omit it, writting $\alpha$.
\end{definition}

The following definition contradicts the definition of a Konig Egervary collection in \cite{dam} (see Definition \ref{the old definition of a KE collection}).
\begin{definition}
Let $F$ be a relevant collection. $F$ is said to be a \emph{Konig Egervary collection} (KE collection in short), if the following equality
 holds: $$|\bigcup F|+|\bigcap F|=2\alpha.$$
\end{definition}

\begin{definition}\label{definition of a KE collection}
An \emph{hereditary Konig Egervary collection} (hke collection in short) is a collection of sets, $F$, such that for some positive integer, $\alpha$, the equality
$$|\bigcup \Gamma|+|\bigcap \Gamma|=2\alpha$$ holds for every non-empty sub-collection, $\Gamma$, of $F$. We call this $\alpha$, $\alpha(F)$. We may omit $F$, where it is clear from the context.
\end{definition}

\begin{proposition}\label{1}
Let $F$ be an hke collection. Clearly, for each $A \in F$, $|A|=\alpha(F)$. So $F$ is a relevant collection.
\end{proposition}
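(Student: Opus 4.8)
The plan is to derive $|A| = \alpha(F)$ for each $A \in F$ directly from the defining equality of an hke collection by specializing to singleton subcollections. First I would fix an arbitrary $A \in F$ and consider the non-empty subcollection $\Gamma = \{A\}$. By Definition \ref{definition of a KE collection}, applied to this $\Gamma$, the equality
$$|\bigcup \Gamma| + |\bigcap \Gamma| = 2\alpha(F)$$
must hold. But $\bigcup \{A\} = A$ and $\bigcap \{A\} = A$, so the left-hand side is exactly $|A| + |A| = 2|A|$. Hence $2|A| = 2\alpha(F)$, and dividing by $2$ gives $|A| = \alpha(F)$.

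Since $\alpha(F)$ is a fixed positive integer independent of the choice of $A$, this shows that every member of $F$ has the same cardinality, namely the positive integer $\alpha(F)$. That is precisely the condition in the definition of a relevant collection, so $F$ is a relevant collection and the notation $\alpha(F)$ is consistent with the one introduced for relevant collections.

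There is essentially no obstacle here: the only thing to be careful about is that the hke equality is quantified over \emph{non-empty} subcollections, which is exactly what licenses plugging in a singleton. The argument is a one-line specialization, and the proposition is really just recording that the notation $\alpha(F)$ is well-defined and compatible across the two definitions.
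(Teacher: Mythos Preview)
Your proof is correct and takes exactly the same approach as the paper: the paper's proof is the single line ``By Definition \ref{definition of a KE collection}, where we substitute $\Gamma=\{A\}$,'' and you have simply spelled out that substitution in detail.
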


\begin{proof}
By Definition \ref{definition of a KE collection}, where we substitute $\Gamma=\{A\}$.
\end{proof}

\begin{proposition}\label{2}
Let $F$ be a relevant collection. If $|F| \leq 2$ then it is an hke collection.
\end{proposition}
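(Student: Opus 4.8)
The plan is to enumerate all non-empty subcollections $\Gamma$ of $F$ and verify the defining equality of Definition \ref{definition of a KE collection} directly in each case, taking as the witnessing integer $\alpha = \alpha(F)$, which exists because $F$ is relevant. Since $|F| \le 2$, every non-empty $\Gamma \subseteq F$ is either a singleton $\{A\}$ with $A \in F$, or (only possible when $|F| = 2$) all of $F$; so there are just these two shapes of $\Gamma$ to check.

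First, for $\Gamma = \{A\}$ with $A \in F$ we have $\bigcup \Gamma = \bigcap \Gamma = A$, and since $F$ is relevant $|A| = \alpha$, hence $|\bigcup \Gamma| + |\bigcap \Gamma| = \alpha + \alpha = 2\alpha$. Second, if $F = \{A, B\}$ and we take $\Gamma = F$, then $\bigcup \Gamma = A \cup B$ and $\bigcap \Gamma = A \cap B$, and the inclusion–exclusion identity $|A \cup B| + |A \cap B| = |A| + |B|$ yields $|\bigcup \Gamma| + |\bigcap \Gamma| = |A| + |B| = \alpha + \alpha = 2\alpha$. (If $A = B$ this reduces to the singleton case, and the degenerate case $F = \emptyset$ is vacuous.) Since the equality holds for every non-empty subcollection, $F$ is an hke collection with $\alpha(F) = \alpha$.

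There is no genuine obstacle here: the only tool needed is the two-set inclusion–exclusion formula, and the role of the proposition is to record the base case. The point worth flagging is that this argument does \emph{not} extend to $|F| \ge 3$ — for larger $F$ the equality $|\bigcup F| + |\bigcap F| = 2\alpha$ for the full collection does not by itself force the equality for all intermediate subcollections, which is precisely why the notion of an hke collection is more restrictive than that of a KE collection and why the later sections require real work.
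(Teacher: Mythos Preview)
Your proof is correct and follows essentially the same approach as the paper's own proof: both reduce to the two-set inclusion--exclusion identity $|A \cup B| + |A \cap B| = |A| + |B|$, with the singleton case being immediate. Your write-up is slightly more explicit in separating out the singleton subcollections, but the substance is identical.
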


\begin{proof}
It is clear when $|F|=1$. So Assume $|F|=2$,
 $F=\{A,B\}$. Take a non-empty sub-collection $\Gamma$ of $F$. Without loss of generality, $\Gamma=F$. So $$|\bigcup \Gamma|+|\bigcap \Gamma|=|A \cup B|+|A \cap B|=|A|+|B|=2\alpha(F).$$
\end{proof}

\begin{proposition}\label{the first on 2016}
Let $G$ be a KE graph. Then $\Omega(G)$ is an hke collection.
\end{proposition}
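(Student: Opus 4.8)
The plan is to fix a single maximum matching of $G$ and show that it lies rigidly across \emph{every} maximum independent set; once that rigidity is established, the hke equality for an arbitrary subcollection reduces to counting the edges of that matching. Write $\alpha=\alpha(G)$ and $\mu=\mu(G)$, so that $\alpha+\mu=|V(G)|$ and, since a matching has $2\mu$ distinct endpoints, $\mu\le\alpha$. Every member of $\Omega(G)$ has exactly $\alpha$ elements, so $\Omega(G)$ is already a relevant collection with $\alpha(\Omega(G))=\alpha$; the only thing to check is the equality $|\bigcup\Gamma|+|\bigcap\Gamma|=2\alpha$ for an arbitrary non-empty $\Gamma\subseteq\Omega(G)$. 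Fix a maximum matching $M$ of $G$: it has $\mu$ edges and leaves a set $U$ of exactly $|V(G)|-2\mu=\alpha-\mu$ vertices unmatched.

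The first and, I expect, the crucial step is the following rigidity claim: for every $S\in\Omega(G)$, each edge of $M$ has exactly one endpoint in $S$, and $U\subseteq S$. Since $S$ is independent, no edge of $M$ lies inside $S$, so every edge of $M$ has at least one endpoint in $V(G)\setminus S$. Now $|V(G)\setminus S|=|V(G)|-\alpha=\mu=|M|$, and if $k$ edges of $M$ had \emph{both} endpoints in $V(G)\setminus S$ then $M$ would cover $\mu+k$ distinct vertices of $V(G)\setminus S$, which is impossible unless $k=0$; hence each edge of $M$ has exactly one endpoint in $V(G)\setminus S$ and exactly one in $S$. Consequently $S$ contains precisely $\mu$ vertices that are endpoints of edges of $M$, so its remaining $\alpha-\mu$ vertices are $M$-unmatched; as $|U|=\alpha-\mu$, this forces $U\subseteq S$. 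This double count is really the only delicate point; everything afterwards is bookkeeping.

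Granting the claim, each $S\in\Omega(G)$ is recovered from its "choice function" $e\mapsto\sigma_S(e)$, where $\sigma_S(e)$ is the unique endpoint of $e\in M$ lying in $S$, via $S=U\cup\{\sigma_S(e):e\in M\}$. Given a non-empty $\Gamma\subseteq\Omega(G)$, call an edge $e\in M$ \emph{$\Gamma$-fixed} if all $S\in\Gamma$ agree on $\sigma_S(e)$, and \emph{$\Gamma$-split} otherwise; let $f$ and $s$ be the respective counts, so $f+s=\mu$. A $\Gamma$-fixed edge contributes its single common chosen endpoint to both $\bigcup\Gamma$ and $\bigcap\Gamma$; a $\Gamma$-split edge contributes both of its endpoints to $\bigcup\Gamma$ and neither to $\bigcap\Gamma$; and $U$ lies in every member of $\Gamma$. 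Therefore $|\bigcap\Gamma|=(\alpha-\mu)+f$ and $|\bigcup\Gamma|=(\alpha-\mu)+f+2s$, whence
$$|\bigcup\Gamma|+|\bigcap\Gamma|=2(\alpha-\mu)+2f+2s=2\alpha.$$
Since $\Gamma$ was an arbitrary non-empty subcollection, $\Omega(G)$ is an hke collection with $\alpha(\Omega(G))=\alpha(G)$.
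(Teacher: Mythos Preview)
Your argument is correct. The counting in the rigidity claim is sound: once every edge of $M$ has at least one endpoint in $V(G)\setminus S$ and $|V(G)\setminus S|=\mu$, the pigeonhole gives exactly one endpoint per edge and forces $U\subseteq S$; the fixed/split bookkeeping then yields $|\bigcup\Gamma|+|\bigcap\Gamma|=2\alpha$ cleanly.

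The paper does not argue this way: its proof is a two-line appeal to results of Jarden--Levit--Mandrescu (\cite[Theorem~3.6 and Corollary~2.8]{jlm}), which establish the needed equalities for subfamilies of $\Omega(G)$ in a KE graph by different means. Your route is genuinely different and more self-contained: you exploit directly the structural fact that in a KE graph a maximum matching is a perfect matching of $V(G)\setminus S$ for \emph{every} maximum independent set $S$, and then read off the hke identity from the induced ``choice functions'' on $M$. This has the advantage of being elementary and of making transparent why the KE equality $\alpha+\mu=|V(G)|$ is exactly what is needed (it is what pins $|V(G)\setminus S|$ to $\mu$ and hence forces the one-endpoint-per-edge rigidity). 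The paper's approach, by contrast, is shorter on the page but pushes the work into the cited lemmas.
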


\begin{proof}
By \cite[Theorem 3.6]{jlm} and \cite[Corollary 2.8]{jlm}.
\end{proof}


The following definition presents the typical example of a maximal hke collection for a fixed $\alpha$: by Proposition \ref{the typical example of a maximal KE collection}, it is an hke collection, by Theorem \ref{maximal iff has 2^alpha elements}, it is a maximal hke collection and by Theorem \ref{uniqueness of maximal}, it is the unique example up to isomorphism.
\begin{definition}\label{definition the typical collection}
Let $\alpha$ be a positive integer. \emph{The typical collection for $\alpha$} is the collection of subsets $S$ of $[2\alpha]$ such that $i \in S \leftrightarrow i+\alpha \notin S$ holds for every $i \in [\alpha]$.
\end{definition}
 
\begin{proposition}\label{the typical example of a maximal KE collection}
Let $\alpha$ be a positive integer. Let $F$ be the typical collection for $\alpha$. Then $F$ is an hke collection and $|F|=2^\alpha$. 
\end{proposition}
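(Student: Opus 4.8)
The plan is to exploit the pairing structure built into the typical collection. Write $[2\alpha]$ as the disjoint union of the $\alpha$ pairs $P_i := \{i,\, i+\alpha\}$ for $i \in [\alpha]$; by definition a subset $S$ of $[2\alpha]$ belongs to $F$ precisely when $|S \cap P_i| = 1$ for every $i \in [\alpha]$.

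First I would dispatch the cardinality claims. Each $S \in F$ contributes exactly one element to each of the $\alpha$ pairs, so $|S| = \alpha$; thus $F$ is a relevant collection and $\alpha$ is the only possible value for $\alpha(F)$. For the count $|F| = 2^\alpha$, the assignment $S \mapsto \{\, i \in [\alpha] : i \in S \,\}$ is a bijection from $F$ onto the power set of $[\alpha]$: its inverse sends $T \subseteq [\alpha]$ to $T \cup \{\, i + \alpha : i \in [\alpha] \setminus T \,\}$, which plainly lies in $F$.

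The heart of the matter is the hke equality. Fix a non-empty subcollection $\Gamma \subseteq F$. Since the pairs $P_i$ partition $[2\alpha]$ and $\bigcup \Gamma, \bigcap \Gamma \subseteq [2\alpha]$, we have $|\bigcup \Gamma| + |\bigcap \Gamma| = \sum_{i \in [\alpha]} \bigl( |(\bigcup \Gamma) \cap P_i| + |(\bigcap \Gamma) \cap P_i| \bigr)$, so it suffices to prove each summand equals $2$. Fix $i$ and partition $\Gamma$ into the blocks $\{S \in \Gamma : i \in S\}$ and $\{S \in \Gamma : i+\alpha \in S\}$; these are disjoint and cover $\Gamma$ because every $S \in \Gamma$ contains exactly one of $i$ and $i+\alpha$, and here I use $\Gamma \neq \emptyset$. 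If one block is all of $\Gamma$ --- say every $S \in \Gamma$ contains $i$ --- then $i \in \bigcap \Gamma \subseteq \bigcup \Gamma$ while $i + \alpha$ belongs to no member of $\Gamma$, so the summand is $1 + 1 = 2$; the other one-block subcase is symmetric. If both blocks are non-empty, then each of $i$ and $i+\alpha$ lies in some member of $\Gamma$ but not in all of them, so $P_i \subseteq \bigcup \Gamma$ and $P_i \cap \bigcap \Gamma = \emptyset$, giving the summand $2 + 0 = 2$.

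Summing over $i \in [\alpha]$ yields $|\bigcup \Gamma| + |\bigcap \Gamma| = 2\alpha$ for every non-empty $\Gamma \subseteq F$, which is exactly the defining condition of an hke collection (with the constant $\alpha$), and we have already seen $|F| = 2^\alpha$. I do not expect a genuine obstacle: the one place requiring care is the exhaustiveness of the case split, and that is precisely where non-emptiness of $\Gamma$ and the defining property ``exactly one of $i$ and $i+\alpha$ lies in $S$'' are invoked.
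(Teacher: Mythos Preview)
Your proof is correct and follows essentially the same idea as the paper: both exploit the pairing $\{i,i+\alpha\}$ to show that each pair contributes exactly $2$ to $|\bigcup\Gamma|+|\bigcap\Gamma|$. The only cosmetic difference is that the paper packages this via the equivalences $i\in\bigcup\Gamma\Leftrightarrow i+\alpha\notin\bigcap\Gamma$ and $i\in\bigcap\Gamma\Leftrightarrow i+\alpha\notin\bigcup\Gamma$ (splitting $[2\alpha]$ into $[\alpha]$ and $[2\alpha]\setminus[\alpha]$), whereas you split into the pairs $P_i$ and argue by a short case analysis; the underlying computation is the same.
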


\begin{proof}
Clearly, $|F|=2^\alpha$. We prove that $F$ is an hke collection. Let $\Gamma$ be a non-empty subset of $F$. For each $i \in [\alpha]$ we have $i \in \bigcup \Gamma$ if and only if $i+\alpha \notin \bigcap \Gamma$ and $i \in \bigcap \Gamma$ if and only if $i+\alpha \notin \bigcup \Gamma$. Hence $$|\bigcup \Gamma \cap [\alpha]|+|\bigcap \Gamma \cap ([2\alpha]-[\alpha])|=\alpha,$$ and $$|\bigcap \Gamma \cap [\alpha]|+|\bigcup \Gamma \cap ([2\alpha]-[\alpha])|=\alpha.$$ Therefore $$|\bigcup \Gamma|+|\bigcap \Gamma|=2\alpha.$$ So $F$ is an hke collection.
\end{proof}

In Theorem \ref{equivalent formulations of a KE collection} we present equivalent conditions for $F$ being an hke collection.

Consider the following equality:
\begin{equa}\label{equality 1}
$$|\bigcap \Gamma_1-\bigcup \Gamma_2|=|\bigcap \Gamma_2-\bigcup \Gamma_1|.$$
\end{equa}

\begin{proposition}\label{the theorem before 23.3.16} 
Let $F$ be a relevant collection. 

 The following are equivalent:
\begin{enumerate}
\item $F$ is an hke collection. \item Equality \ref{equality 1} holds for every two non-empty disjoint sub-collections, $\Gamma_1,\Gamma_2$ of $F$, \item Equality \ref{equality 1} holds for every two non-empty disjoint sub-collections, $\Gamma_1,\Gamma_2$ of $F$ with $\Gamma_1 \cup \Gamma_2=F$.
\end{enumerate}
\end{proposition}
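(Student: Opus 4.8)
The plan is to prove the chain of implications $(1)\Rightarrow(2)\Rightarrow(3)\Rightarrow(1)$, with the bulk of the work living in $(1)\Rightarrow(2)$ and $(3)\Rightarrow(1)$; the implication $(2)\Rightarrow(3)$ is immediate since condition (3) merely restricts condition (2) to the special disjoint pairs that partition $F$. Throughout I would fix a non-empty collection $\Gamma$ and repeatedly use the inclusion-exclusion-flavored identity
$$|\bigcup \Gamma| + |\bigcap \Gamma| = 2\alpha \iff |\bigcup \Gamma| - \alpha = \alpha - |\bigcap \Gamma| \iff |\bigcup\Gamma - A| = |A - \bigcap\Gamma|$$
for any $A \in \Gamma$ (using $|A| = \alpha$, which holds by Proposition \ref{1} once we know $F$ is hke, or which we may assume in the relevant direction). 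More generally, for disjoint non-empty $\Gamma_1, \Gamma_2$ one has the set-theoretic computation $\bigcap\Gamma_1 - \bigcup\Gamma_2$ versus $\bigcap\Gamma_2 - \bigcup\Gamma_1$, and the goal is to relate the equality of their cardinalities to the hke equality applied to $\Gamma_1 \cup \Gamma_2$.

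For $(1)\Rightarrow(2)$: assume $F$ is hke and let $\Gamma_1, \Gamma_2$ be disjoint non-empty subcollections. Set $\Gamma = \Gamma_1 \cup \Gamma_2$. Pick $A \in \Gamma_1$ and $B \in \Gamma_2$. I would write $\bigcup\Gamma = \bigcup\Gamma_1 \cup \bigcup\Gamma_2$ and $\bigcap\Gamma = \bigcap\Gamma_1 \cap \bigcap\Gamma_2$, and then carefully decompose $|\bigcup\Gamma| + |\bigcap\Gamma|$ by partitioning $\bigcup\Gamma$ according to membership in $\bigcup\Gamma_1$ and $\bigcup\Gamma_2$. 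The key algebraic move: apply the hke equality to $\Gamma_1$, to $\Gamma_2$, and to $\Gamma_1 \cup \Gamma_2$, giving three equations $|\bigcup\Gamma_i| + |\bigcap\Gamma_i| = 2\alpha$ and $|\bigcup\Gamma| + |\bigcap\Gamma| = 2\alpha$. Subtracting and simplifying using $|\bigcup\Gamma| = |\bigcup\Gamma_1| + |\bigcup\Gamma_2| - |\bigcup\Gamma_1 \cap \bigcup\Gamma_2|$ and $|\bigcap\Gamma| = |\bigcap\Gamma_1| + |\bigcap\Gamma_2| - |\bigcap\Gamma_1 \cup \bigcap\Gamma_2|$ should collapse everything to $|\bigcap\Gamma_1 \cup \bigcap\Gamma_2| = |\bigcup\Gamma_1 \cap \bigcup\Gamma_2|$, and then a further set manipulation (noting $\bigcap\Gamma_1 \subseteq \bigcup\Gamma_1$ etc.) should convert this to Equality \ref{equality 1}. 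I would double-check by tracking which elements lie in $\bigcap\Gamma_1 - \bigcup\Gamma_2$, which in $\bigcap\Gamma_2 - \bigcup\Gamma_1$, and which in the "overlap" $\bigcup\Gamma_1 \cap \bigcup\Gamma_2$.

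For $(3)\Rightarrow(1)$: this is the direction that needs an inductive or recursive argument, and I expect it to be the main obstacle. Given an arbitrary non-empty $\Gamma \subseteq F$, I want to show $|\bigcup\Gamma| + |\bigcap\Gamma| = 2\alpha$. The natural approach is induction on $|F| - |\Gamma|$ (or on $|F \setminus \Gamma|$). When $\Gamma = F$, split $F$ into $\Gamma_1 \cup \Gamma_2$ with, say, $\Gamma_1 = \{A\}$ a singleton and $\Gamma_2 = F \setminus \{A\}$; condition (3) gives Equality \ref{equality 1} for this partition, and since $\bigcap\Gamma_1 = A$ has size $\alpha$, this should directly yield the hke equality for $F$ by reversing the computation above. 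For proper subcollections $\Gamma \subsetneq F$, pick $B \in F \setminus \Gamma$ and apply condition (3) to the partition $(\Gamma, F\setminus\Gamma)$ — but this controls $F$, not $\Gamma$ — so instead the cleaner route is: having established the hke equality for $F$ itself and for all collections of size $> |\Gamma|$, consider $\Gamma' = \Gamma \cup \{B\}$; by the induction hypothesis $\Gamma'$ satisfies the hke equality, and then relating $\Gamma$ to $\Gamma'$ via one more set-theoretic comparison (how does adding a single set $B$ change $\bigcup$ and $\bigcap$?) should let me descend. The delicate point is making sure the single-step descent equation is actually an equality and not just an inequality; I anticipate needing the already-proven equivalence with condition (2) — or rather a two-set instance of it applied to $\Gamma$ and $\{B\}$ — which we get for free once the induction has reached the relevant sizes. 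I would organize the induction so that the base case (all of $F$) uses (3) directly and the inductive step uses an auxiliary lemma comparing $\Gamma$ with $\Gamma \cup \{B\}$, closing the loop.
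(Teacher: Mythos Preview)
Your plan for $(1)\Rightarrow(2)$ has a genuine gap. You propose to use only the three hke equalities for $\Gamma_1$, $\Gamma_2$, and $\Gamma_1\cup\Gamma_2$, then subtract using inclusion--exclusion. But that subtraction does \emph{not} collapse to $|\bigcap\Gamma_1\cup\bigcap\Gamma_2|=|\bigcup\Gamma_1\cap\bigcup\Gamma_2|$; it gives only their \emph{sum} equal to $2\alpha$. And even that sum is not enough to separate $|\bigcap\Gamma_1-\bigcup\Gamma_2|$ from $|\bigcap\Gamma_2-\bigcup\Gamma_1|$. Concretely, with $\alpha=4$ take $A=\{1,2,3,4\}$, $B=\{1,2,3,5\}$, $C=\{4,5,6,7\}$, $D=\{4,5,6,8\}$, $\Gamma_1=\{A,B\}$, $\Gamma_2=\{C,D\}$: all three of your hke equalities hold ($5+3=8$, $5+3=8$, $8+0=8$), yet $|\bigcap\Gamma_1-\bigcup\Gamma_2|=3$ while $|\bigcap\Gamma_2-\bigcup\Gamma_1|=1$. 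So three equalities are provably insufficient; the full hke hypothesis must be invoked on intermediate subcollections. The paper does exactly that, by induction on $|\Gamma_1|$: the base case $\Gamma_1=\{A^*\}$ uses the hke equality for $\Gamma_2$ and for $\Gamma_2\cup\{A^*\}$, and the inductive step peels off one set $A^*\in\Gamma_1$ and applies the inductive hypothesis to two smaller instances.

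Your plan for $(3)\Rightarrow(1)$ is also incomplete. Descending from $\Gamma'=\Gamma\cup\{B\}$ to $\Gamma$ requires precisely $|B-\bigcup\Gamma|=|\bigcap\Gamma-B|$, which rearranges to $|\bigcup\Gamma|+|\bigcap\Gamma|=|\bigcup\Gamma'|+|\bigcap\Gamma'|$; the right side is $2\alpha$ by the induction hypothesis, but the left side is exactly what you are trying to prove, so the step is circular. Invoking ``the already-proven equivalence with condition (2)'' does not help: you have no hke collection available to which $(1)\Rightarrow(2)$ applies at that stage. The paper avoids this by proving $(3)\Rightarrow(2)$ and $(2)\Rightarrow(1)$ separately, each via a disjoint-decomposition identity: for $(3)\Rightarrow(2)$ one writes $\bigcap\Gamma_1-\bigcup\Gamma_2$ as the disjoint union over all partition-completions $(\Gamma_1^+,\Gamma_2^+)$ of $F$ of the sets $\bigcap\Gamma_1^+-\bigcup\Gamma_2^+$, and then (3) matches the summands pairwise.
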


Before proving Proposition \ref{the theorem before 23.3.16}, we present an exercise:
\begin{exercise}\label{exercise 1}
Assume that $\{A,B,C,D\}$ is an hke collection (so in particular $\{A,B,C\}$ is an hke collection).
Prove:
\begin{enumerate}
\item $|A-B-C|=|B \cap C-A|$. A clue: $A-B-C=(A \cup B \cup C)-(B \cup C)$ and $B \cap C-A=(B \cap C)-(A \cap B \cap C)$. \item $|A \cap B-C-D|=|C \cap D-A-B|$. A clue: $A \cap B -C-D=(A-C-D)-(A-B-C-D)$. Apply Clause (1).
\end{enumerate}
\end{exercise}

We now prove Proposition \ref{the theorem before 23.3.16}.
\begin{proof}
$(1) \Rightarrow (2):$
We prove it by induction on $r=:|\Gamma_1|$. 

\emph{Case a:} $r=1$, so $\Gamma_1=\{A^*\}$ for some set $A^*$. In this case, we apply the idea of Exercise \ref{exercise 1}(1).

We should prove that $$|A^*-\bigcup \Gamma_2|=|\bigcap \Gamma_2-A^*|,$$ 

namely, $$|\bigcup \Gamma_2 \cup A^*|-|\bigcup \Gamma_2|=|\bigcap \Gamma_2|-|\bigcap \Gamma_2 \cap A^*|,$$
or equivalently,
$$|\bigcup \Gamma_2 \cup A^*|+|\bigcap \Gamma_2 \cap A^*|=|\bigcap \Gamma_2|+|\bigcup \Gamma_2|.$$
 But by Clause $(1)$, each side of this equality equals $2\alpha$.

\emph{Case a:} $r>1$. In this case, we apply the idea of Exercise \ref{exercise 1}(2). We fix $A^* \in \Gamma_1$. First we write three trivial equalities, for convenience: $$\bigcap (\Gamma_1-\{A^*\})=\{x:x \in A \text{ for every } A \in \Gamma_1 \text{ with } A \neq A^*\},$$ $$\bigcup (\Gamma_1-\{A^*\})=\{x:x \in A \text{ for some } A \in \Gamma_1 \text{ with } A \neq A^*\}$$ and $$\bigcap (\Gamma_1 \cup \{A^*\})=A^* \cap \bigcap \Gamma_1.$$ 

We now begin the computation.

$$|\bigcap \Gamma_1-\bigcup \Gamma_2|=|\bigcap (\Gamma_1-\{A^*\})-\bigcup \Gamma_2|-|\bigcap (\Gamma_1-\{A^*\})-\bigcup (\Gamma_2 \cup \{A^*\})|.$$

The right side of this equality is a subtraction of two summands. We apply the induction hypothesis on each summand: 
	
	$$|\bigcap (\Gamma_1-\{A^*\})-\bigcup \Gamma_2|=\bigcap \Gamma_2-\bigcup (\Gamma_1-\{A^*\})|$$ and
	
	$$|\bigcap (\Gamma_1-\{A^*\})-\bigcup (\Gamma_2 \cup \{A^*\})|=\bigcap (\Gamma_2 \cup \{A^*\})-\bigcup (\Gamma_1-\{A^*\})|.$$
	
By the three last equalities we get: $$|\bigcap \Gamma_1-\bigcup \Gamma_2|=|\bigcap \Gamma_2-\bigcup (\Gamma_1-\{A^*\})|-|\bigcap (\Gamma_2 \cup \{A^*\})-\bigcup (\Gamma_1-\{A^*\})|.$$

So $$|\bigcap \Gamma_1-\bigcup \Gamma_2|=|\bigcap \Gamma_2-\bigcup \Gamma_1|.$$

Equality \ref{equality 1} is proved, so Clause $(2)$ is proved.

$(2) \Rightarrow (1):$
Let $\Gamma$ be a non-empty subset of $F$.
Fix $D \in \Gamma$. Since $F$ is a relevant collection, $|D|=\alpha$ (this is the unique place where we use the assumption that $F$ is a relevant collection, but we eliminate this assumption later). Therefore it is enough to prove that
$$|\bigcup \Gamma|+|\bigcap \Gamma|=2|D|,$$
or equivalently, $$|\bigcup \Gamma-D|=|D-\bigcap \Gamma|.$$
Let $H$ be the set of ordered pairs $\langle \Gamma_1,\Gamma_2 \rangle$ of non-empty disjoint subsets of $\Gamma$ such that $\Gamma_1 \cup \Gamma_2=\Gamma$ and $D \in \Gamma_2\}$.

By Clause (2), $$\sum_{\langle \Gamma_1,\Gamma_2 \rangle \in H} |\bigcap \Gamma_1-\bigcup \Gamma_2|=\sum_{\langle \Gamma_1,\Gamma_2 \rangle \in H} |\bigcap \Gamma_2-\bigcup \Gamma_1|.$$

So it is enough to prove the following two equalities:
$$|\bigcup \Gamma-D|=\sum_{\langle \Gamma_1,\Gamma_2 \rangle \in H} |\bigcap \Gamma_1-\bigcup \Gamma_2|$$
and
$$|D-\bigcap \Gamma|=\sum_{\langle \Gamma_1,\Gamma_2 \rangle \in H} |\bigcap \Gamma_2-\bigcup \Gamma_1|.$$

Since their proofs are dual, we prove the first equality only.

$$\bigcup \Gamma-D=\bigcup_{\langle \Gamma_1,\Gamma_2 \rangle \in H} (\bigcap \Gamma_1-\bigcup \Gamma_2),$$

(on the one hand, if $x \in \bigcup \Gamma-D$ then for $\Gamma_1=\{A \in \Gamma:x \in A\}$ and $\Gamma_2=\{A \in \Gamma:x \notin A\}$ we have $x \in \bigcap \Gamma_1-\bigcup \Gamma_2$ and $\langle \Gamma_1,\Gamma_2 \rangle \in H$. On the other hand, assume that $x \in \bigcap \Gamma_1-\bigcup \Gamma_2$ for some $\langle \Gamma_1,\Gamma_2 \rangle \in H$. Then $x \in \bigcup \Gamma$ (because $x \in \bigcap \Gamma_1$ and $\emptyset \neq \Gamma_1 \subseteq \Gamma$) and $x \notin D$ (because $x \notin \bigcup \Gamma_2$ and $D \subseteq \bigcup \Gamma_2$). So $x 
\in \bigcup \Gamma-D$). Therefore

$$|\bigcup \Gamma-D|=\sum_{\langle \Gamma_1,\Gamma_2 \rangle \in H} |\bigcap \Gamma_1-\bigcup \Gamma_2|,$$
because this is a sum of cardinalities of disjoint sets (if $\langle \Gamma_1,\Gamma_2 \rangle$ and $\langle \Gamma_3,\Gamma_4 \rangle$ are two different pairs in $H$ then there is no element $x \in (\bigcap \Gamma_1-\bigcup \Gamma_2) \cap (\bigcap \Gamma_3-\bigcup \Gamma_4)$. Otherwise, take $A \in \Gamma_1-\Gamma_3$ (or vice versa). So $A \in \Gamma_4$. Hence, $x \in \bigcap \Gamma_1 \subseteq A$ and $x \notin \bigcup \Gamma_4 \supseteq A$, a contradiction).

The implication $(2) \Rightarrow (1)$ is proved.

Since Clause $(3)$ is a private case of Clause $(2)$, it remains to prove $(3) \Rightarrow (2)$. Let $\Gamma_1,\Gamma_2$ be two non-empty disjoint subsets of $F$. We should prove Equality \ref{equality 1} for these $\Gamma_1$ and $\Gamma_2$, without assuming $\Gamma_1 \cup \Gamma_2=F$. Let $H$ be the set of disjoint pairs $\langle \Gamma_1^+,\Gamma_2^+ \rangle$ of $F$ such that $\Gamma_1 \subseteq \Gamma_1^+$, $\Gamma_2 \subseteq \Gamma_2^+$ and $\Gamma_1^+ \cup \Gamma_2^+=F$.

By Clause $(3)$, 
$$\sum_{\langle \Gamma_1^+,\Gamma_2^+ \rangle \in H} |\bigcap \Gamma_1^+-\bigcup \Gamma_2^+|=\sum_{\langle \Gamma_1^+,\Gamma_2^+ \rangle \in H} |\bigcap \Gamma_2^+-\bigcup \Gamma_1^+|.$$

So it remains to prove the following two equalities:
$$|\bigcap \Gamma_1-\bigcup \Gamma_2|=\sum_{\langle \Gamma_1^+,\Gamma_2^+ \rangle \in H} |\bigcap \Gamma_1^+-\bigcup \Gamma_2^+|$$ 
and
$$|\bigcap \Gamma_2-\bigcup \Gamma_1|=\sum_{\langle \Gamma_1^+,\Gamma_2^+ \rangle \in H} |\bigcap \Gamma_2^+-\bigcup \Gamma_1^+|,$$

Since their proofs are dual, we prove the first equality only.

$$\bigcap \Gamma_1-\bigcup \Gamma_2=\bigcup_{\langle \Gamma_1^+,\Gamma_2^+ \rangle \in H} (\bigcap \Gamma_1^+-\bigcup \Gamma_2^+)$$

(on the one hand, if $x \in \bigcap \Gamma_1-\bigcup \Gamma_2$ then for $\Gamma_1=\{A \in \Gamma:x \in A\}$ and $\Gamma_2=\{A \in \Gamma:x \notin A\}$, we have $x \in \bigcap \Gamma_1^+-\bigcup \Gamma_2^+$ and the pair $\langle \Gamma_1^+,\Gamma_2^+ \rangle$ belongs to $H$. On the other hand, if $x \in \bigcap \Gamma_1^+-\bigcup \Gamma_2^+$ for some $\langle \Gamma_1^+,\Gamma_2^+ \rangle \in H$ then $x \in \bigcap \Gamma_1^+ \subseteq \bigcap \Gamma_1$ and $x \notin \bigcup \Gamma_2^+ \supseteq \bigcup \Gamma_2$. Hence, $x \in \bigcap \Gamma_1-\bigcup \Gamma_2$). Therefore

$$|\bigcap \Gamma_1-\bigcup \Gamma_2|=\sum_{\langle \Gamma_1^+,\Gamma_2^+ \rangle \in H} |\bigcap \Gamma_1^+-\bigcup \Gamma_2^+|,$$
because it is a sum of disjoint sets.
\end{proof}

The following proposition eliminates the assumption that $F$ is a relevant collection.
\begin{proposition}\label{elimination of relevant}
Clause (3) of Proposition \ref{the theorem before 23.3.16} implies that $F$ is a relevant collection. 
\end{proposition}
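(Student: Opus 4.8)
The plan is to recast Clause (3) as a statement about the ``atoms'' of the partition of $\bigcup F$ determined by membership in the sets of $F$, and then to extract the constancy of $|A|$, $A\in F$, from a complementation symmetry.

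For a subset $G$ of $F$ let $E_G=\{x:\ \{C\in F:\ x\in C\}=G\}$ be the set of points lying in exactly the members of $G$. These sets are pairwise disjoint, and each $A\in F$ is their disjoint union over the $G$ containing $A$; since $A$ is finite this gives $|A|=\sum_{G\subseteq F,\,A\in G}|E_G|$, a finite sum of non-negative integers. The key observation is that Clause (3) is exactly a statement about these atoms: if $F=\Gamma_1\cup\Gamma_2$ with $\Gamma_1,\Gamma_2$ disjoint and non-empty, then a point lies in $\bigcap\Gamma_1-\bigcup\Gamma_2$ iff it lies in every member of $\Gamma_1$ and, because $F\setminus\Gamma_1=\Gamma_2$, in no other member of $F$; that is, $\bigcap\Gamma_1-\bigcup\Gamma_2=E_{\Gamma_1}$ and, symmetrically, $\bigcap\Gamma_2-\bigcup\Gamma_1=E_{\Gamma_2}$. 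So Clause (3) says precisely that $|E_G|=|E_{F\setminus G}|$ for every non-empty proper subset $G$ of $F$. (This is where the hypothesis $\Gamma_1\cup\Gamma_2=F$ of Clause (3) is used in an essential way; it is what makes the ``smaller'' Clause (3) the convenient one here.)

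Next I would fix two distinct members $A,B\in F$ and split the subsets of $F$ containing $A$ according to whether they also contain $B$:
$$|A|=\sum_{\{A,B\}\subseteq G}|E_G|+\sum_{A\in G,\,B\notin G}|E_G|,$$
and symmetrically $|B|=\sum_{\{A,B\}\subseteq G}|E_G|+\sum_{B\in G,\,A\notin G}|E_G|$. The first sums agree. In the second sum for $A$, every index $G$ satisfies $A\in G$ (so $G\neq\emptyset$) and $B\notin G$ (so $G\neq F$), hence the reformulated Clause (3) applies and $|E_G|=|E_{F\setminus G}|$; moreover $G\mapsto F\setminus G$ is a bijection between $\{G:\ A\in G,\ B\notin G\}$ and $\{G':\ B\in G',\ A\notin G'\}$. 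Therefore the second sums also agree, and $|A|=|B|$. Thus all members of $F$ have a common cardinality $\alpha$, a non-negative integer because the members are finite; $\alpha$ is positive as soon as $F$ has a non-empty member, which holds under the standing conventions. Hence $F$ is a relevant collection.

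I expect no serious obstacle: the argument is a short piece of finite combinatorics. The only points requiring care are the identification $\bigcap\Gamma_1-\bigcup\Gamma_2=E_{\Gamma_1}$ (which is exactly where $\Gamma_1\cup\Gamma_2=F$ enters) and the bookkeeping of the complementation bijection — in particular checking that the subsets $G$ appearing in the asymmetric sums, together with their complements $F\setminus G$, are automatically non-empty and proper, so that Clause (3) is legitimately invoked for them.
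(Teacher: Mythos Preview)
Your proof is correct and uses essentially the same idea as the paper's: both recognize that for a partition $F=\Gamma_1\cup\Gamma_2$ one has $\bigcap\Gamma_1-\bigcup\Gamma_2=E_{\Gamma_1}$, so Clause~(3) is exactly the atom symmetry $|E_G|=|E_{F\setminus G}|$, and both finish by summing atoms. The only cosmetic difference is that the paper computes the common cardinality explicitly as $\alpha=\tfrac{1}{2}(|\bigcup F|+|\bigcap F|)$ (writing $|D|=x+|\bigcap F|$ and $|\bigcup F|=2x+|\bigcap F|$ with $x=\sum_{D\in G\subsetneq F}|E_G|$), whereas you compare $|A|$ and $|B|$ pairwise via the complementation bijection on the index sets---slightly slicker, at the cost of not naming $\alpha$.
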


\begin{proof}
Define $$\alpha=\frac{|\bigcup F|+|\bigcap F|}{2}.$$ Let $D \in F$. We prove that $|D|=\alpha$. Let $P$ denote the family of partitions $\{\Gamma_1,\Gamma_2\}$ of $F$ into two non-empty subcollections.  

If $x \in \bigcup F$ then $x \in \bigcap \Gamma_1-\bigcup \Gamma_2$ for some partition $\{\Gamma_1,\Gamma_2\} \in P$ or $x \in \bigcap F$.

Let $$P_1=\{\{\Gamma_1,\Gamma_2\} \in P:D \in \Gamma_1\}$$ and $$P_2=\{\{\Gamma_1,\Gamma_2\} \in P:D \notin \Gamma_1\}.$$ 

Define $$x=\sum_{\{\Gamma_1,\Gamma_2\} \in P_1}|\bigcap \Gamma_1-\bigcup \Gamma_2|$$
and
$$y=\sum_{\{\Gamma_1,\Gamma_2\} \in P_2}|\bigcap \Gamma_1-\bigcup \Gamma_2|.$$

By Clause $(3)$ of Proposition \ref{the theorem before 23.3.16}, we have $x=y$.

It is easy to check the following three equalities:
\begin{enumerate}
\item $|\bigcup F|=x+y+|\bigcap F|=2x+|\bigcap F|$, \item
$|D|=x+|\bigcap F|$ and \item
$|\bigcup F|+|\bigcap F|=2\alpha$.
\end{enumerate}

Hence, $|D|=\alpha$. Since $D$ was an arbitrary set in $F$, $F$ is a relevant collection. 
\end{proof}

\begin{theorem}\label{equivalent formulations of a KE collection}\label{equivalent definitions of a KE collection}\label{equivalent definitions of an hke collection}
Let $F$ be a collection of sets. 

The following are equivalent:
\begin{enumerate}
\item $F$ is an hke collection. \item Equality \ref{equality 1} holds for every two non-empty disjoint sub-collections, $\Gamma_1,\Gamma_2$ of $F$, \item Equality \ref{equality 1} holds for every two non-empty disjoint sub-collections, $\Gamma_1,\Gamma_2$ of $F$ with $\Gamma_1 \cup \Gamma_2=F$.
\end{enumerate}
\end{theorem}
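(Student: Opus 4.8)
The plan is to derive this from Proposition~\ref{the theorem before 23.3.16}, which already proves the equivalence of the three clauses under the additional hypothesis that $F$ is a relevant collection. So the only thing left to add is that, in each of the two directions where that hypothesis would be needed, it is automatically available.

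First I would handle $(1)\Rightarrow(2)$: if $F$ is an hke collection, then Proposition~\ref{1} shows that every member of $F$ has exactly $\alpha(F)$ elements, so $F$ is a relevant collection, and Proposition~\ref{the theorem before 23.3.16} then delivers Clause~(2). The implication $(2)\Rightarrow(3)$ is trivial, since the pairs appearing in Clause~(3) are exactly the pairs of Clause~(2) that satisfy the extra condition $\Gamma_1\cup\Gamma_2=F$, so Clause~(3) is a special case of Clause~(2). For $(3)\Rightarrow(1)$ we cannot invoke Proposition~\ref{the theorem before 23.3.16} immediately, since at this point $F$ is not yet known to be relevant; but this is precisely the content of Proposition~\ref{elimination of relevant}, which shows that Clause~(3) already forces $F$ to be a relevant collection. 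Once that is established, Proposition~\ref{the theorem before 23.3.16} applies and yields Clause~(1), closing the cycle $(1)\Rightarrow(2)\Rightarrow(3)\Rightarrow(1)$.

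I do not expect a genuine obstacle here: the combinatorial substance — the induction on $|\Gamma_1|$ and the double-counting over partitions of $F$ — has already been carried out in Propositions~\ref{the theorem before 23.3.16} and~\ref{elimination of relevant}. The one point that must be checked carefully is that the proof of Proposition~\ref{elimination of relevant} genuinely avoids the relevant-collection hypothesis (indeed it defines $\alpha$ as $\tfrac12(|\bigcup F|+|\bigcap F|)$ rather than presupposing its existence), so that the step ``$(3)$ implies $F$ is relevant'' is not circular and the reduction to Proposition~\ref{the theorem before 23.3.16} is legitimate.
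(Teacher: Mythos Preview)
Your proposal is correct and matches the paper's own proof essentially verbatim: the paper also reduces everything to Proposition~\ref{the theorem before 23.3.16} and then observes that Clause~(1) implies relevance by Proposition~\ref{1}, Clause~(3) implies relevance by Proposition~\ref{elimination of relevant}, and Clause~(2) implies Clause~(3). The only cosmetic difference is that the paper phrases it as ``each clause implies $F$ is relevant'' rather than chasing the cycle $(1)\Rightarrow(2)\Rightarrow(3)\Rightarrow(1)$, but the content is identical.
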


\begin{proof}
By Proposition \ref{the theorem before 23.3.16}, it is enough to prove that each clause implies that $F$ is a relevant collection. By Proposition \ref{1}, Clause $(1)$ implies that $F$ is a relevant collection. By Proposition \ref{elimination of relevant} Clause $(3)$ implies that $F$ is a relevant collection. But Clause $(2)$ implies Clause $(3)$.
\end{proof}

\begin{corollary}
Let $G$ be a graph. The following are equivalent:
\begin{enumerate}
\item $G$ is a KE graph. \item For some non-empty hke collection $F \subseteq \Omega(G)$, there is a matching $M:V[G]-\bigcup F \to \bigcap F$ and Equality \ref{equality 1} holds for every two non-empty disjoint sub-collections, $\Gamma_1,\Gamma_2$ of $F$. \item For some non-empty hke collection $F \subseteq \Omega(G)$, there is a matching $M:V[G]-\bigcup F \to \bigcap F$ and Equality \ref{equality 1} holds for every two non-empty disjoint sub-collections, $\Gamma_1,\Gamma_2$ of $F$ with $\Gamma_1 \cup \Gamma_2=F$.
\end{enumerate}

\end{corollary}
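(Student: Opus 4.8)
The plan is to read the equivalence off from the two results already established, namely Theorem~\ref{the main theorem of dam} (the restatement of the theorem of \cite{dam}) and Theorem~\ref{equivalent formulations of a KE collection}. The first step is to observe that in clauses (2) and (3) the collection $F$ is \emph{assumed} to be a non-empty hke collection. Hence, applying the implications $(1)\Rightarrow(2)$ and $(1)\Rightarrow(3)$ of Theorem~\ref{equivalent formulations of a KE collection} to $F$ itself, the conjunct ``Equality~\ref{equality 1} holds for every two non-empty disjoint sub-collections $\Gamma_1,\Gamma_2$ of $F$'' (respectively, for those with $\Gamma_1\cup\Gamma_2=F$) is automatically true. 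In other words, once this redundant conjunct is stripped, both clause (2) and clause (3) assert exactly the same thing: there is a non-empty hke subcollection $F$ of $\Omega(G)$ for which a matching $M\colon V(G)-\bigcup F\to\bigcap F$ exists.

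The second step is to quote Theorem~\ref{the main theorem of dam}, which says precisely that $G$ is a KE graph if and only if such an $F$ exists (with $F$ playing the role of $\Gamma$ there). This yields $(1)\Leftrightarrow(2)$ and $(1)\Leftrightarrow(3)$ simultaneously. If a cyclic presentation is preferred: $(1)\Rightarrow(2)$ holds because Theorem~\ref{the main theorem of dam} furnishes a non-empty hke $F\subseteq\Omega(G)$ with the required matching while Theorem~\ref{equivalent formulations of a KE collection} supplies Equality~\ref{equality 1} for free; $(2)\Rightarrow(3)$ is immediate, since the matching condition is identical and the disjoint pairs with $\Gamma_1\cup\Gamma_2=F$ form a sub-family of all non-empty disjoint pairs; and $(3)\Rightarrow(1)$ follows by forgetting the Equality~\ref{equality 1} conjunct and invoking Theorem~\ref{the main theorem of dam}.

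The only point that needs a moment of care is the non-emptiness bookkeeping. Theorem~\ref{the main theorem of dam} is phrased for an hke subcollection $\Gamma$ of $\Omega(G)$; to make $\alpha(\Gamma)$, $\bigcup\Gamma$ and $\bigcap\Gamma$ meaningful I would require $\Gamma$ to be non-empty, which is harmless since $\Omega(G)$ is itself non-empty, so a non-empty hke subcollection always exists (e.g.\ a singleton). With $F=\Gamma$ non-empty, the hypotheses of Theorem~\ref{equivalent formulations of a KE collection} apply verbatim. I do not anticipate any genuine obstacle: the mathematical content is carried entirely by the two quoted theorems, and the corollary is essentially a repackaging that exhibits the promised ``new characterization'' of KE graphs, combining the matching condition of \cite{dam} with the Equality~\ref{equality 1} condition.
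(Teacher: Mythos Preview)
Your proposal is correct and follows the same approach as the paper: the paper's proof is simply ``By Theorem~\ref{equivalent formulations of a KE collection} and Theorem~\ref{the main theorem of dam}.'' Your additional observation that the Equality~\ref{equality 1} conjunct in (2) and (3) is redundant (since $F$ is already stipulated to be hke, so Theorem~\ref{equivalent formulations of a KE collection} applies automatically) is a nice clarification of why the combination works, and your care about non-emptiness is appropriate though not strictly needed here.
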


\begin{proof}
By Theorem \ref{equivalent formulations of a KE collection} and Theorem \ref{the main theorem of dam}.
\end{proof}

\section{Maximal hke Collections}
In this section, we study maximal hke collections. A priori it is not clear whether there is a maximal hke collection, but we prove it. Moreover, we prove the uniqueness of a maximal hke collection for a given $\alpha$ and characterize it.

The most simple characterization of a maximal hke collection is its cardinality: $|F|=2^\alpha$ (see Theorem \ref{maximal iff has 2^alpha elements}). This theorem is proved by the following argument: Let $F$ be a maximal hke collection. We fix a set $A \in F$ and present a bijection $f_A:F \to P(A)$.
\begin{definition}\label{definition of the function}
Let $F$ be an hke collection. For each $A \in F$, we define a function $f_A:F \to P(A)$ by $f_A(D)=A \cap D$.
\end{definition}

In the next propositions we present several properties of a maximal hke collection:
\begin{enumerate}
\item by Proposition \ref{the function is an injection}, $f_A$ is an injection (actually, this property holds for every hke collection), \item by Proposition \ref{maximal implies bigcap=emptyset}, $\bigcap F=\emptyset$, \item by Proposition \ref{the function is a surjection} $f_A$ is a surjection and 
\item By Theorem \ref{maximal iff has 2^alpha elements}, $|F|=2^\alpha$.
\end{enumerate}

\begin{proposition}\label{the function is an injection}
Let $F$ be an hke collection and let $A \in F$. Then the function $f_A$ is an injection of $F$ into $P(A)$. So $|F| \leq 2^\alpha$.
\end{proposition}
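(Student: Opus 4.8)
The plan is to show that $f_A\colon F\to P(A)$, $D\mapsto A\cap D$, is injective by deriving a contradiction from the assumption that two distinct members $D_1,D_2\in F$ satisfy $A\cap D_1=A\cap D_2$. The natural tool is Theorem \ref{equivalent formulations of a KE collection}: since $F$ is an hke collection, Equality \ref{equality 1} holds for every pair of non-empty disjoint subcollections $\Gamma_1,\Gamma_2$ of $F$. I would apply this with a carefully chosen $\Gamma_1,\Gamma_2$ built from $A$, $D_1$, $D_2$ so that the two sides of the equality become something like $|D_1-D_2-A|$ versus $0$ (or $|D_1\cap D_2 - A|$ versus $0$), forcing $D_1=D_2$ modulo $A$ in a way that contradicts $D_1\neq D_2$; actually the cleanest route is to get $|D_1 \triangle D_2|=0$.

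Concretely, here is the intended computation. Suppose $D_1\neq D_2$ but $A\cap D_1=A\cap D_2$. Then $D_1\triangle D_2$ is non-empty and disjoint from $A$. Pick $x\in D_1\setminus D_2$ (WLOG). Consider $\Gamma_1=\{A,D_1\}$ and $\Gamma_2=\{D_2\}$: these are non-empty and disjoint (as $D_2\notin\{A,D_1\}$, which holds since $D_2\neq D_1$ and, if $D_2=A$, then $A\cap D_1 = A\cap D_2 = A$ gives $A\subseteq D_1$ and a short separate argument still yields $D_1=A=D_2$ via $|A|=|D_1|=\alpha$). Equality \ref{equality 1} then reads
$$|A\cap D_1 - D_2| = |D_2 - A - D_1|.$$
The left side equals $|A\cap D_1-D_2|$, which is $0$ because $A\cap D_1 = A\cap D_2\subseteq D_2$. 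Hence $D_2\subseteq A\cup D_1$. By the symmetric choice $\Gamma_1=\{A,D_2\}$, $\Gamma_2=\{D_1\}$ we get $D_1\subseteq A\cup D_2$. Combining, $D_1\setminus A = D_2\setminus A$ (each is contained in the other once we intersect the displayed inclusions with the complement of $A$), and since $D_1\cap A = D_2\cap A$ by hypothesis, we conclude $D_1=D_2$, the desired contradiction. Thus $f_A$ is injective, and $|F|\le|P(A)|=2^{|A|}=2^\alpha$, using Proposition \ref{1} for $|A|=\alpha$.

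The main obstacle is handling the degenerate configurations correctly: the pair $\Gamma_1,\Gamma_2$ must be genuinely disjoint subcollections of $F$, so I must rule out $D_2=A$ (and $D_1=A$) before invoking Equality \ref{equality 1}, and separately cover the case where one of $D_1,D_2$ equals $A$ by a direct cardinality argument ($A\cap D_1=A\cap D_2=A$ forces $A\subseteq D_1\cap D_2$, and then $|D_1|=|D_2|=|A|=\alpha$ forces $D_1=D_2=A$). Everything else is a routine set-algebra manipulation of Equality \ref{equality 1}, and no appeal to maximality of $F$ is needed — injectivity holds for every hke collection, exactly as the proposition asserts.
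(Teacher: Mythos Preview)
Your proposal is correct and follows essentially the same route as the paper: both apply Equality~\ref{equality 1} with $\Gamma_1=\{A,D_1\}$ and $\Gamma_2=\{D_2\}$ to obtain $|A\cap D_1-D_2|=|D_2-A-D_1|=0$, then conclude by symmetry that $D_1=D_2$. The only difference is cosmetic: you explicitly separate out the degenerate cases $D_i=A$ (which the paper silently absorbs), and you phrase the final step as $D_1\setminus A=D_2\setminus A$ whereas the paper writes $C-B\subseteq (A\cap C)-(A\cap B)=\emptyset$.
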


\begin{proof}
Let $A,B,C \in F$ with $A \cap B=A \cap C$. We have to show that $B=C$.
By symmetry, it is enough to prove that $C \subseteq B$. But $A \cap B-C=(A \cap B)-(A \cap C)= \emptyset$. So $|C-A-B|=|A \cap B-C|=0$, namely, $C-B \subseteq A$. Hence, $C-B \subseteq (A \cap C)-(A \cap B)=\emptyset$.
\end{proof}

\begin{proposition}\label{if |F|=2^alpha then F is maximal}
If $F$ is an hke collection and $|F|=2^\alpha$ then $F$ is a maximal hke collection.
\end{proposition}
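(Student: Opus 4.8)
The statement to prove is: if $F$ is an hke collection with $|F| = 2^\alpha$, then $F$ is a maximal hke collection. The plan is to argue by contradiction. Suppose $F$ is not maximal, so there is an hke collection $F'$ with $F \subsetneq F'$; pick $B \in F' \setminus F$. Since $F'$ is itself an hke collection, Proposition \ref{the function is an injection} applies to $F'$: for any fixed $A \in F$, the map $f_A \colon F' \to P(A)$ given by $f_A(D) = A \cap D$ is an injection into $P(A)$, hence $|F'| \le 2^\alpha$. But $|F'| \ge |F| + 1 = 2^\alpha + 1$, a contradiction.

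The only real content is invoking Proposition \ref{the function is an injection} at the level of the larger collection $F'$ rather than $F$, so I would write it out carefully: $F'$ is an hke collection by hypothesis of the (non-)maximality, every $A \in F$ also lies in $F'$, and $f_A$ as defined on all of $F'$ is an injection by that proposition, giving $|F'| \le |P(A)| = 2^{|A|} = 2^\alpha$ (using that $F'$ is relevant, so $|A| = \alpha(F') = \alpha(F) = \alpha$, which is forced since $A \in F$). Combining $|F'| \le 2^\alpha$ with $|F'| > |F| = 2^\alpha$ yields the contradiction, so no such $F'$ exists and $F$ is maximal.

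I do not anticipate any genuine obstacle here — the proof is a one-line consequence of the injectivity statement already established for arbitrary hke collections, together with the cardinality bound $2^\alpha$ it yields. The one point worth a sentence of care is confirming that $\alpha(F') = \alpha(F)$: this is immediate because $F \subseteq F'$ and $F$ is nonempty (any $A \in F$ has $|A| = \alpha(F)$ and also $|A| = \alpha(F')$ since $A \in F'$ and $F'$ is relevant by Proposition \ref{1}). With that, the containment $F \subseteq F'$ plus $|F| = 2^\alpha = $ the maximum possible size of an hke collection of this $\alpha$ forces $F = F'$, contradicting $F \subsetneq F'$.
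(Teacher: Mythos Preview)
Your proof is correct and follows essentially the same approach as the paper: both rely directly on Proposition \ref{the function is an injection} to obtain the upper bound $|F'| \le 2^\alpha$ for any hke collection $F'$, which immediately rules out a strictly larger hke collection. The paper's proof is a terse one-liner, while yours spells out the contradiction and the verification $\alpha(F') = \alpha(F)$ explicitly, but the substance is identical.
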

 
\begin{proof}
By Proposition \ref{the function is an injection}
\end{proof}

\begin{proposition}\label{maximal implies bigcap=emptyset}
Let $F$ be a maximal hke collection. Then $\bigcap F=\emptyset$.
\end{proposition}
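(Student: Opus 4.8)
The plan is to argue by contradiction: suppose $F$ is a maximal hke collection but $\bigcap F \neq \emptyset$, say $x \in \bigcap F$. The idea is to produce a set $B \notin F$ such that $F \cup \{B\}$ is still an hke collection, contradicting maximality. The natural candidate is obtained by picking some $A \in F$ and setting $B = (A \setminus \{x\}) \cup \{y\}$ for a suitably chosen element $y$; since $x \in A$ but $x \notin B$, we have $B \neq A$, and more is needed to ensure $B \notin F$ — but in fact any $A' \in F$ contains $x$, so a set omitting $x$ cannot already be in $F$, which immediately gives $B \notin F$.

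The first step is to choose $y$. I would take $y$ to be any element \emph{not} in $\bigcup F$; such a $y$ exists because $\bigcup F$ is finite (so we may enlarge the ambient universe, or simply note the collection lives inside a fixed finite set and there is room). Then $B = (A \setminus \{x\}) \cup \{y\}$ has $|B| = \alpha$, so $F \cup \{B\}$ is still a relevant collection. The second and main step is to verify that $F \cup \{B\}$ satisfies the hke equality for every subcollection $\Gamma$. By Theorem~\ref{equivalent formulations of a KE collection} it suffices to check Equality~\ref{equality 1} for every pair of non-empty disjoint subcollections $\Gamma_1, \Gamma_2$ of $F \cup \{B\}$; the only new cases are those in which $B$ appears in $\Gamma_1$ or in $\Gamma_2$.

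For such a case, write $\Gamma_1 = \Gamma_1' \cup \{B\}$ (the case $B \in \Gamma_2$ being dual). Since $y \notin \bigcup F$, the element $y$ contributes to no intersection or union coming from $F$, and since $B$ agrees with $A$ off $\{x, y\}$, the sets $\bigcap \Gamma_1$ and $\bigcap \Gamma_2$ change in a controlled way when we pass from the corresponding subcollections of $F$ (with $A$ in place of $B$) to those of $F \cup \{B\}$: roughly, replacing $A$ by $B$ removes $x$ from any intersection containing it and replaces it by nothing useful (as $y \notin \bigcup F$). The point is that both sides of Equality~\ref{equality 1} are affected identically, because $x \in \bigcap F \subseteq \bigcap \Gamma_2$ as well, so $x$ never lies in $\bigcap \Gamma_1 - \bigcup \Gamma_2$ or in $\bigcap \Gamma_2 - \bigcup \Gamma_1$ in the first place, and $y$ lies in neither side either. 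Thus Equality~\ref{equality 1} for $\Gamma_1, \Gamma_2$ in $F \cup \{B\}$ reduces to the same equality with $\Gamma_1$ replaced by $\Gamma_1' \cup \{A\}$ (or, if $\Gamma_1' = \emptyset$, by $\{A\}$), which holds because $F$ is hke. This contradicts maximality, so $\bigcap F = \emptyset$.

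The main obstacle is the bookkeeping in the last step: one must be careful about the degenerate subcases — when $\Gamma_1 = \{B\}$ alone, or when $A$ itself happens to lie in $\Gamma_2$ — and confirm in each that introducing $B$ perturbs $\bigcap \Gamma_i$ and $\bigcup \Gamma_i$ only at the "invisible" coordinates $x$ and $y$, which by construction cancel out of Equality~\ref{equality 1}. Once this case analysis is organized cleanly, the contradiction is immediate.
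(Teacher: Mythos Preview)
Your approach is essentially the paper's: it too constructs $B$ by deleting elements of $\bigcap F$ from a fixed $A\in F$ and inserting fresh elements from outside $\bigcup F$ (the paper removes all of $\bigcap F$ at once rather than a single $x$), then verifies Equality~\ref{equality 1} for $F\cup\{B\}$ via exactly the four-way case split you anticipate (is $\Gamma_1=\{B\}$, and is $A\in\Gamma_2$). One small correction to your heuristic: in the singleton case $\Gamma_1=\{B\}$ the elements $x,y$ \emph{do} appear --- $y$ on the left side and $x$ on the right --- so both sides increase by one rather than being literally unchanged, but since you already flag this as a degenerate subcase needing its own bookkeeping, the plan goes through.
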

 
\begin{proof}
Fix $A \in F$. Define $$B=A \cup C-\bigcap F,$$ where $C$ is a set of new elements ($C \cap \bigcup F=\emptyset$) of cardinality $|\bigcap F|$. We now prove that $F'=F \cup \{B\}$ is an hke collection. First note that $|B|=|A|=\alpha$. Let $\Gamma_1$ and $\Gamma_2$ be two non-empty disjoint subsets of $F'$. We have to show that Equality \ref{equality 1} holds.

If $B \notin \Gamma_1 \cup \Gamma_2$ then it holds, because $F$ is an hke collection. Assume that $B \in \Gamma_1 \cup \Gamma_2$. Since $\Gamma_1$ and $\Gamma_2$ are disjoint, by symmetry, we may assume that $B \in \Gamma_1-\Gamma_2$. The proof is separated into four cases, according to the following two questions: 
\begin{enumerate}
\item Is $B$ the only set in $\Gamma_1$?
\item Does $A$ belong to $\Gamma_2$?
\end{enumerate} 

\emph{Case a:} $\Gamma_1=\{B\}$ and $A \in \Gamma_2$. In this case, $$\bigcap \Gamma_1-\bigcup \Gamma_2 \subseteq B-A=C \subseteq \bigcap \Gamma_1-\bigcup \Gamma_2.$$ 

So $$\bigcap \Gamma_1-\bigcup \Gamma_2=C.$$ 

Similarly, $$\bigcap \Gamma_2-\bigcup \Gamma_1=\bigcap F.$$
Since $|C|=|\bigcap F|$, Equality \ref{equality 1} holds. 

\emph{Case b:} $B \in \Gamma_1$, $1<|\Gamma_1|$ and $A \in \Gamma_2$. In this case, $$\bigcap \Gamma_1-\bigcup \Gamma_2=\emptyset=\bigcup \Gamma_2-\bigcup \Gamma_1.$$ 

\emph{Case c:} $\Gamma_1=\{B\}$ and $A \notin \Gamma_2$. In this case, $\bigcap \Gamma_1=\bigcup \Gamma_1=B$ and $$|B-\bigcup \Gamma_2|=|C|+|A-\bigcup \Gamma_2|=|C|+|\bigcap \Gamma_2-A|=|\bigcap \Gamma_2-B|,$$
(the second equality holds, because $F$ is an hke collection). So Equality \ref{equality 1} holds in this case, too.

\emph{Case d:} $B \in \Gamma_1$, $1<|\Gamma_1|$  and $A \notin \Gamma_2$. In this case, we prove the following claim:
\begin{claim}\label{17.3.2016}
Define $\Gamma_3=\Gamma_1-\{B\} \cup \{A\}$.
The following things hold:
\begin{enumerate}
\item  $\bigcap \Gamma_1-\bigcup \Gamma_2=\bigcap \Gamma_3-\bigcup \Gamma_2$, \item  
$\bigcap \Gamma_2-\bigcup \Gamma_1=\bigcap \Gamma_2-\bigcup \Gamma_3$ and
\item $|\bigcap \Gamma_3-\bigcup \Gamma_2|=|\bigcap \Gamma_2-\bigcup \Gamma_3|$.
\end{enumerate}
\end{claim}

\begin{proof}
(1) Since $\bigcap F \subseteq \bigcup \Gamma_2$, it is enough to show that if $x \in \bigcup F'- \bigcap F$ then $x \in \bigcap \Gamma_1$ if and only if $x \in \bigcap \Gamma_3$. If $x \in C$ then clearly $x \notin \bigcap \Gamma_3$, so $x$ does not belong to the right side. But since $1<|\Gamma_1|$, $x$ does not belong to $\bigcap \Gamma_1$ too. If $x \notin C$ then clearly, $x \in \bigcap \Gamma_1$ if and only if $x \in \bigcap \Gamma_3$.
 
(2) Let $x \in \bigcap \Gamma_2$. So $x \notin C$. We show that $x \in \bigcup \Gamma_1$ if and only if $x \in \bigcup \Gamma_3$. If $x \in \bigcap F$ then $x$ belongs to both $\bigcup \Gamma_1$ (because $1<|\Gamma_1|$) and $\bigcup \Gamma_3$. If $x \notin \bigcap F$ then clearly it belongs to $\bigcup \Gamma_1$ if and only if it belongs to $\bigcup \gamma_3$.

(3) Since $A \notin \Gamma_2$, the collections $\Gamma_2$ and $\Gamma_3$ are disjoint subcollections of $F$. But $F$ is an hke collection. Therefore by Theorem \ref{equivalent definitions of an hke collection}, Equality \ref{equality 1} holds for $\Gamma_2$ and $\Gamma_3$.
\end{proof}

Claim \ref{17.3.2016} implies Equality \ref{equality 1} (for $\Gamma_1$ and $\Gamma_2$).
Proposition \ref{maximal implies bigcap=emptyset} is proved.
\end{proof}

Letting an hke collection, $F$, Proposition \ref{a condition for adding a subset for KE} presents a sufficient and necessary condition for $F \cup \{D\}$ being an hke collection. 
It is a preparation for Proposition \ref{the function is a surjection}.   In \cite{broken} we present an improved version of Proposition \ref{a condition for adding a subset for KE}. 

We know by Theorem \ref{equivalent definitions of an hke collection} that $F \cup \{D\}$ is an hke collection if and only if Equality \ref{equality 1} holds for each partition of $F \cup \{D\}$. Proposition \ref{a condition for adding a subset for KE} presents a weaker condition: it is enough to consider partitions of $F \cup \{D\}$ into two subcollections, such that $A$ and $D$ belong to the same subcollection.
 
\begin{proposition}\label{a condition for adding a subset for KE}
Let $F$ be an hke collection, let $A \in F$ and let $D$ be an arbitrary set. Then $F \cup \{D\}$ is an hke collection if and only if $|D|=\alpha$ and for every partition $\{\Gamma_1,\Gamma_2\}$ of $F-\{A\}$ with $\Gamma_2 \neq \emptyset$, the following holds:
$$|A \cap D \cap \bigcap \Gamma_1-\bigcup \Gamma_2|=|\bigcap \Gamma_2-\bigcup \Gamma_1-A-D|$$
(if $\Gamma_1=\emptyset$ then we ignore $\bigcup \Gamma_1$ and $\bigcap \Gamma_1$, or more formarlly stipulate $\bigcap \Gamma_1=\bigcup F \cup \{D\}$).
\end{proposition}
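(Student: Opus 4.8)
The strategy is to use Theorem~\ref{equivalent definitions of an hke collection}: $F \cup \{D\}$ is an hke collection if and only if $|D| = \alpha$ and Equality~\ref{equality 1} holds for every partition $\{\Delta_1, \Delta_2\}$ of $F \cup \{D\}$ into two non-empty subcollections. The forward direction is then immediate, so the work is the converse: assuming $|D| = \alpha$ and the displayed equality for all partitions $\{\Gamma_1, \Gamma_2\}$ of $F - \{A\}$ with $\Gamma_2 \neq \emptyset$, I must deduce Equality~\ref{equality 1} for an \emph{arbitrary} partition $\{\Delta_1, \Delta_2\}$ of $F \cup \{D\}$.

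First I would dispose of the partitions not involving $D$ (here Equality~\ref{equality 1} holds because $F$ itself is an hke collection) and those with $A$ and $D$ on the same side. For the latter, writing $\Delta_1 = \Gamma_1' \cup \{A, D\}$ with $\Gamma_1' \subseteq F - \{A\}$ and $\Delta_2 = \Gamma_2' \subseteq F - \{A\}$, one has $\bigcap \Delta_1 = A \cap D \cap \bigcap \Gamma_1'$ and $\bigcup \Delta_1 \supseteq A \cup D$, so $\bigcap \Delta_1 - \bigcup \Delta_2 = A \cap D \cap \bigcap \Gamma_1' - \bigcup \Gamma_2'$ and $\bigcap \Delta_2 - \bigcup \Delta_1 = \bigcap \Gamma_2' - \bigcup \Gamma_1' - A - D$; these two cardinalities agree precisely by the hypothesis applied to $\{\Gamma_1', \Gamma_2'\}$ (with the stated convention when $\Gamma_1' = \emptyset$). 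So the heart of the matter is the remaining case: a partition in which $A$ and $D$ lie on opposite sides.

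The key reduction is a \emph{swapping} argument, modeled on Claim~\ref{17.3.2016} in the proof of Proposition~\ref{maximal implies bigcap=emptyset}: given a partition $\{\Delta_1, \Delta_2\}$ of $F \cup \{D\}$ with, say, $D \in \Delta_1$ and $A \in \Delta_2$, I want to replace $D$ by $A$ in $\Delta_1$ to land in the previous case. Concretely, set $\Delta_1^{\flat} = (\Delta_1 - \{D\}) \cup \{A\}$. If $\Delta_1 - \{D\} \neq \emptyset$ I expect the identities $\bigcap \Delta_1 - \bigcup \Delta_2 = \bigcap \Delta_1^{\flat} - \bigcup (\Delta_2 - \{A\})$ and $\bigcap \Delta_2 - \bigcup \Delta_1 = \bigcap (\Delta_2 - \{A\}) \cup \{A\} - \bigcup \Delta_1^{\flat}$ — wait, more carefully: the goal is to match the quantities for $\{\Delta_1, \Delta_2\}$ with those for the partition $\{\Delta_1^{\flat}, \Delta_2 - \{A\} \cup \{D\}\}$ of $F \cup \{D\}$, which has $A$ and $D$ together. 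Here one must exploit that $A$ and $D$ have the same cardinality $\alpha$ and that $F$ is hke, together with the hypothesis, to control the symmetric difference introduced by the swap. The edge case $\Delta_1 = \{D\}$ has to be handled separately (as in Cases a and c of Proposition~\ref{maximal implies bigcap=emptyset}): there $\bigcap \Delta_1 = \bigcup \Delta_1 = D$, and one computes both sides of Equality~\ref{equality 1} directly, splitting according to whether $A \in \Delta_2$ and invoking the hypothesis with $\Gamma_1 = \Delta_2 - \{A\}$ (or $\Gamma_1 = \Delta_2$) and $\Gamma_2$ empty — no, rather with the roles arranged so the stated equality applies.

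The main obstacle I anticipate is bookkeeping in the swapping step: verifying that replacing $D$ by $A$ changes $\bigcap \Delta_1 - \bigcup \Delta_2$ and $\bigcap \Delta_2 - \bigcup \Delta_1$ in a controlled, symmetric way requires a careful element-chase distinguishing elements of $D - A$, of $A - D$, of $A \cap D$, and of the ambient universe, and then reassembling these into an instance of the hypothesis — essentially an inclusion–exclusion argument over the partitions of $F - \{A\}$ in the spirit of the telescoping sums in the proof of Proposition~\ref{the theorem before 23.3.16}. Once the swap is justified, the chain ``arbitrary partition $\leadsto$ $A,D$ on opposite sides $\leadsto$ $A,D$ on the same side $\leadsto$ hypothesis'' closes the argument, and $|D| = \alpha$ guarantees $F \cup \{D\}$ is relevant, so Theorem~\ref{equivalent definitions of an hke collection} applies.
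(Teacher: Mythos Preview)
Your overall scaffolding --- split the partitions of $F\cup\{D\}$ according to whether $A$ and $D$ lie on the same side, observe that the same-side case is exactly the hypothesis, and treat the opposite-side case with special attention to $\Delta_1=\{D\}$ --- matches the paper. But two of your steps do not go through as written.

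\textbf{The swap does not work.} The analogy with Claim~\ref{17.3.2016} breaks down: there the swap succeeds because the new set $B$ and the old set $A$ agree outside $C\cup\bigcap F$, so one gets literal \emph{set equalities} like $\bigcap\Gamma_1-\bigcup\Gamma_2=\bigcap\Gamma_3-\bigcup\Gamma_2$. Here $A$ and $D$ are arbitrary, so $\bigcap\Delta_1\subseteq D$ and $\bigcap\Delta_1^\flat\subseteq A$ live in unrelated sets; no such identity holds. (Also, as you half-noticed, your $\Delta_1^\flat=(\Delta_1-\{D\})\cup\{A\}$ does \emph{not} put $A$ and $D$ on the same side: the complement is $(\Delta_2-\{A\})\cup\{D\}$.) The paper handles this case --- in its notation, $\Gamma_2\neq\emptyset$ --- in one line, with no swap: since $F$ is hke, $|A\cap\bigcap\Gamma_1-\bigcup\Gamma_2|=|\bigcap\Gamma_2-\bigcup\Gamma_1-A|$; subtracting the hypothesis $(*)$ from this (split each side according to membership in $D$) gives $(**)$ immediately.

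\textbf{The edge case is where the work is.} When $\Delta_1=\{D\}$ (equivalently $\Gamma_2=\emptyset$ in the paper), one must show $|\bigcap F-D|=|D-\bigcup F|$, and here your plan dissolves into ``no, rather with the roles arranged\ldots''. There is no split on whether $A\in\Delta_2$: necessarily $\Delta_2=F\ni A$. This is precisely the case that consumes most of the paper's proof. The idea you are missing is a counting argument: partition $D$ as $\mathbb{D}=\{D\cap\bigcap\Gamma_2-\bigcup\Gamma_1:\{\Gamma_1,\Gamma_2\}\text{ partitions }F\}$, assign to each piece a ``pseudo-dual'' subset of $A$ (either the piece itself if $A\in\Gamma_2$, or $\bigcap\Gamma_1-\bigcup\Gamma_2-D$ if $A\in\Gamma_1$), check that the pseudo-duals form a partition of $A$, and observe that by the already-established Case~A together with $(*)$ every piece matches its pseudo-dual in cardinality except possibly the single piece $D-\bigcup F$. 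Since $|D|=|A|=\alpha$, that last piece must match too, giving $|D-\bigcup F|=|\bigcap F-D|$.
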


Before proving Proposition \ref{a condition for adding a subset for KE}, we present an application for a relevant collection of cardinality $3$.
\begin{corollary}
Let $F'=\{A,B,D\}$ be a relevant collection of cardinality $3$. Then $F'$ is an hke collection if and only if $|A \cap D-B|=|B-A-D|$.  
\end{corollary}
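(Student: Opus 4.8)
The plan is to derive this corollary directly from Proposition \ref{a condition for adding a subset for KE} by specializing to the case $F=\{A,B\}$ and adding the set $D$. First I note that $F=\{A,B\}$ is an hke collection by Proposition \ref{2}, since $|F|\le 2$; so Proposition \ref{a condition for adding a subset for KE} applies with this $F$, the distinguished element being $A$. We want $F'=F\cup\{D\}=\{A,B,D\}$ to be an hke collection, and since $F'$ is assumed to be a relevant collection, $|D|=\alpha=|A|$ holds automatically; so the content is precisely the family of equalities indexed by partitions of $F-\{A\}=\{B\}$ into $\{\Gamma_1,\Gamma_2\}$ with $\Gamma_2\ne\emptyset$.

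There is essentially only one such partition, namely $\Gamma_1=\emptyset$ and $\Gamma_2=\{B\}$: since $F-\{A\}$ is the singleton $\{B\}$, and $\Gamma_2$ must be non-empty, $B$ has to land in $\Gamma_2$, forcing $\Gamma_1=\emptyset$. Plugging this into the displayed condition of Proposition \ref{a condition for adding a subset for KE}, and using the stipulation that when $\Gamma_1=\emptyset$ we ignore $\bigcup\Gamma_1$ and treat $\bigcap\Gamma_1$ as the whole ambient union (so $A\cap D\cap\bigcap\Gamma_1=A\cap D$), the left-hand side becomes $|A\cap D-\bigcup\Gamma_2|=|A\cap D-B|$ and the right-hand side becomes $|\bigcap\Gamma_2-A-D|=|B-A-D|$. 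Thus the condition reads exactly $|A\cap D-B|=|B-A-D|$, which is the asserted equivalence.

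The one point requiring a little care is the degenerate convention for $\Gamma_1=\emptyset$: I should check that substituting $\bigcap\Gamma_1=\bigcup F\cup\{D\}$ (equivalently, $\bigcup F'$) into $A\cap D\cap\bigcap\Gamma_1$ indeed collapses to $A\cap D$, which it does since $A\cap D\subseteq\bigcup F'$; and that dropping $\bigcup\Gamma_1$ from $\bigcap\Gamma_2-\bigcup\Gamma_1-A-D$ leaves $\bigcap\Gamma_2-A-D=B-A-D$. Neither is a real obstacle; the proof is a pure specialization. I therefore expect the only genuine ``work'' to be making explicit that the partition $(\emptyset,\{B\})$ is the unique one to consider, so that the universally quantified condition of Proposition \ref{a condition for adding a subset for KE} reduces to the single displayed equality, after which both directions of the ``if and only if'' follow simultaneously.
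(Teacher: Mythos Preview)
Your proof is correct and follows exactly the same route as the paper: set $F=\{A,B\}$, note it is hke since $|F|\le 2$, and apply Proposition~\ref{a condition for adding a subset for KE} with the unique admissible partition $(\Gamma_1,\Gamma_2)=(\emptyset,\{B\})$ of $F-\{A\}$. Your added remarks about $|D|=\alpha$ and the degenerate convention for $\Gamma_1=\emptyset$ are accurate and simply make explicit what the paper leaves implicit.
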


\begin{proof}
$F=\{A,B\}$ is an hke collection, because its cardinality is $2$. $F-\{A\}=\{B\}$. The unique partition $\{\Gamma_1,\Gamma_2\}$ of $\{B\}$ with $\Gamma_2 \neq \emptyset$ is $\{\emptyset,\{B\}\}$. Now apply Proposition \ref{a condition for adding a subset for KE}.
\end{proof}

We now prove Proposition \ref{a condition for adding a subset for KE}.
\begin{proof}
Let us call the equality above, `Equality (*)'.

If $F \cup \{D\}$ is an hke collection then by Proposition \ref{equivalent definitions of a KE collection}($(1) \rightarrow (3)$), Equality (*) holds.

Conversely, assume that $|D|=\alpha$ and Equality (*) holds. Applying again Proposition \ref{equivalent definitions of an hke collection}($(3) \rightarrow (1)$), it remains to prove that $$ \text{(**)   } |A \cap \bigcap \Gamma_1-\bigcup \Gamma_2-D|=|D \cap \bigcap \Gamma_2-\bigcup \Gamma_1-A|,$$
for every partition $\{\Gamma_1,\Gamma_2\}$ of $F-\{A\}$ (including that case $\Gamma_2=\emptyset$).

\emph{Case A:} $\Gamma_2 \neq \emptyset$. In this case, we do not use the fact that $|D|=\alpha$. Since $F$ is an hke collection, the following equality holds:
$$ |A \cap \bigcap \Gamma_1-\bigcup \Gamma_2|=|\bigcap \Gamma_2-\bigcup \Gamma_1-A|.$$

Hence, by subtracting Equality (*) from the last equality, we obtain Equality (**). 

\emph{Case B:} $\Gamma_2=\emptyset$. In this case, we have to prove $$|\bigcap F-D|=|D-\bigcup F|.$$
 We use the assumption $|D|=\alpha$ and Case A. 

\begin{claim}\label{equality ***}
$$|\bigcap \Gamma_1-\bigcup \Gamma_2-D|=|D \cap \bigcap \Gamma_2-\bigcup \Gamma_1|$$
holds, for each partition $\{\Gamma_1,\Gamma_2\}$ of $F$ into two non-empty subcollections.
\end{claim}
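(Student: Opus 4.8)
The plan is to fix a partition $\{\Gamma_1,\Gamma_2\}$ of $F$ into two non-empty subcollections and to recognize that the asserted equality is, after substitution, one of the two equalities already at hand in the proof of Proposition~\ref{a condition for adding a subset for KE}, the choice depending on which side of the partition contains the distinguished set $A$. Since $A\in F$, exactly one of $A\in\Gamma_1$, $A\in\Gamma_2$ holds, and I would split on this. The point I want to stress is that the two cases do \emph{not} collapse to the same earlier fact: one of them is a translation of Equality~(**) (Case~A), the other a translation of the hypothesis Equality~(*).

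In the first case, $A\in\Gamma_1$, I would write $\Gamma_1=\{A\}\cup(\Gamma_1-\{A\})$, so that $\bigcap\Gamma_1=A\cap\bigcap(\Gamma_1-\{A\})$ and $\bigcup\Gamma_1=A\cup\bigcup(\Gamma_1-\{A\})$. Substituting these two identities into the asserted equality turns it verbatim into Equality~(**) for the partition $\{\Gamma_1-\{A\},\Gamma_2\}$ of $F-\{A\}$. Because $\{\Gamma_1,\Gamma_2\}$ splits $F$ into non-empty parts and $A\in\Gamma_1$, the part $\Gamma_2$ is non-empty, so this instance of $(**)$ is exactly the one already settled in Case~A (from the hke-ness of $F$ together with Equality~(*)). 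The subcase $\Gamma_1=\{A\}$ gives $\Gamma_1-\{A\}=\emptyset$ and is harmless, being covered by the stipulation $\bigcap\emptyset=\bigcup F\cup\{D\}$ under which Case~A already operates.

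In the second case, $A\in\Gamma_2$, I would instead invoke the hypothesis Equality~(*) for the partition $\{\Gamma_2-\{A\},\Gamma_1\}$ of $F-\{A\}$, whose distinguished non-empty part is $\Gamma_1$ (non-empty since $\{\Gamma_1,\Gamma_2\}$ splits $F$ into non-empty parts). Using $\bigcap\Gamma_2=A\cap\bigcap(\Gamma_2-\{A\})$ and $\bigcup\Gamma_2=A\cup\bigcup(\Gamma_2-\{A\})$, the left-hand side of Equality~(*), namely $|A\cap D\cap\bigcap(\Gamma_2-\{A\})-\bigcup\Gamma_1|$, becomes $|D\cap\bigcap\Gamma_2-\bigcup\Gamma_1|$, and its right-hand side $|\bigcap\Gamma_1-\bigcup(\Gamma_2-\{A\})-A-D|$ becomes $|\bigcap\Gamma_1-\bigcup\Gamma_2-D|$. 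Thus Equality~(*) reads precisely as the asserted identity. The degenerate subcase $\Gamma_2=\{A\}$ yields $\Gamma_2-\{A\}=\emptyset$ and is again absorbed by the empty-part stipulation in Equality~(*).

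The only real care needed is bookkeeping: tracking which part of the $F$-partition absorbs $A$ and noticing that this choice selects between Equality~(**) and Equality~(*), rather than between two instances of a single identity. (Conceptually, the asserted equality is just Equality~\ref{equality 1} for the subcollections $\Gamma_1$ and $\Gamma_2\cup\{D\}$ of $F\cup\{D\}$, and the case split on the location of $A$ mirrors whether $A$ and $D$ then lie in the same part or in opposite parts; but since $F\cup\{D\}$ is not yet known to be hke, I would derive it directly from the two translations above.) I expect no genuine obstacle beyond this translation; in particular the hypothesis $|D|=\alpha$ is not used for the Claim itself, being reserved for the subsequent deduction of Case~B.
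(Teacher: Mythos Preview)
Your proposal is correct and is exactly the unpacking of the paper's one-line proof ``By Equality~(**) in case~A and by Equality~(*)'': the case split on whether $A\in\Gamma_1$ or $A\in\Gamma_2$ and the corresponding substitutions $\bigcap\Gamma_i=A\cap\bigcap(\Gamma_i-\{A\})$, $\bigcup\Gamma_i=A\cup\bigcup(\Gamma_i-\{A\})$ are precisely what the paper intends. Your handling of the degenerate subcases $\Gamma_1=\{A\}$ and $\Gamma_2=\{A\}$ via the empty-part stipulation, and your remark that $|D|=\alpha$ is not needed here, are also accurate.
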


\begin{proof}
By Equality (**) in case A and by Equality (*).
\end{proof}

Consider the following partition of $D$:
$$\mathbb{D}=\{D \cap \bigcap \Gamma_2-\bigcup \Gamma_1:\{\Gamma_1,\Gamma_2\} \text{ is a partition of } F\}.$$

For every set $D' \in \mathbb{D}$, we define a `pseudo dual set', $(D')^d$, that is a subset of $A$ as follows:

\begin{displaymath}
(D \cap \bigcap \Gamma_2-\bigcup \Gamma_1)^d=\Big\{
\begin{matrix}
 D \cap \bigcap \Gamma_2-\bigcup \Gamma_1, && \text{ if } A \in \Gamma_2
\\ \bigcap \Gamma_1-\bigcup \Gamma_2-D, &&
\text{ if } A \notin \Gamma_2
\end{matrix}
\end{displaymath}

By Claim \ref{equality ***}, $|(D')^d|=|D'|$ holds for each $D' \in \mathbb{D}$, except maybe two sets: $D \cap \bigcap F$ and $D-\bigcup F$. But $(D \cap \bigcap F)^d=D \cap \bigcap F$. So the equality $|(D')^d|=|D'|$ holds for $D \cap \bigcap F$ as well. Our goal is to prove this equality for $D-\bigcup F$: to prove that $|(D-\bigcup F)^d|=|D-\bigcup F|$ (namely, $|\bigcap F-D|=|D-\bigcup F|$). 

Define $$\mathbb{A}=\{(D')^d:D' \in \mathbb{D}\}.$$

Since $|D|=\alpha=|A|$, if $\mathbb{A}$ is a partition of $A$ then $$\sum_{D' \in \mathbb{D}}|D'|=|D|=|A|=\sum_{D' \in \mathbb{D}}|(D')^d|$$
and so the equality $|(D')^d|=|D'|$ holds for $D'=D-\bigcup F$ as needed.

By the following claim, $\mathbb{A}$ is a partition of $A$:
\begin{claim}
$\mathbb{A}$ satisfies the following things: 
\begin{enumerate}
\item  $\bigcup \mathbb{A} \subseteq A$, \item $A \subseteq \bigcup \mathbb{A}$ and \item  the pseudo dual sets of each two different sets in $\mathbb{D}$ are disjoint.
\end{enumerate}
\end{claim}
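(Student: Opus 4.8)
The plan is to prove the three claimed properties of $\mathbb{A}=\{(D')^d:D'\in\mathbb{D}\}$ separately, using only that $F$ is an hke collection, Claim \ref{equality ***}, and the definition of the pseudo dual set. Throughout, recall that a generic element $D'$ of $\mathbb{D}$ has the form $D\cap\bigcap\Gamma_2-\bigcup\Gamma_1$ for a partition $\{\Gamma_1,\Gamma_2\}$ of $F$, and that $(D')^d$ equals $D'$ itself if $A\in\Gamma_2$ and equals $\bigcap\Gamma_1-\bigcup\Gamma_2-D$ if $A\notin\Gamma_2$.

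\textbf{Clause (1): $\bigcup\mathbb{A}\subseteq A$.} I would check the two forms of $(D')^d$ in turn. If $A\in\Gamma_2$, then $(D')^d=D\cap\bigcap\Gamma_2-\bigcup\Gamma_1\subseteq\bigcap\Gamma_2\subseteq A$ since $A\in\Gamma_2$. If $A\notin\Gamma_2$, then $A\in\Gamma_1$ (because $\{\Gamma_1,\Gamma_2\}$ partitions $F$), so $(D')^d=\bigcap\Gamma_1-\bigcup\Gamma_2-D\subseteq\bigcap\Gamma_1\subseteq A$. Either way $(D')^d\subseteq A$, so the union is contained in $A$.

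\textbf{Clause (2): $A\subseteq\bigcup\mathbb{A}$.} Given $x\in A$, I would split $F$ into $\Gamma_1=\{E\in F:x\in E\}$ and $\Gamma_2=\{E\in F:x\notin E\}$; note $A\in\Gamma_1$ since $x\in A$, and $\Gamma_1\neq\emptyset$. This is a partition of $F$ unless $\Gamma_2=\emptyset$, i.e.\ $x\in\bigcap F$; but then $x\in D\cap\bigcap F$, and $D\cap\bigcap F=(D\cap\bigcap F)^d\in\mathbb{A}$, so $x$ is covered. When $\Gamma_2\neq\emptyset$: since $A\in\Gamma_1$ we have $A\notin\Gamma_2$, so the relevant set of $\mathbb{A}$ is $(D')^d=\bigcap\Gamma_1-\bigcup\Gamma_2-D$, and $x$ lies in it provided $x\notin D$. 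If instead $x\in D$, then $x\in D\cap\bigcap\Gamma_1$ and $x\notin\bigcup\Gamma_2$; but here we need a member of $\mathbb{A}$ containing $x$ — reclassify using the partition $\{\Gamma_1',\Gamma_2'\}$ with $\Gamma_2'=\{E\in F:x\notin E\}$ as before but observe $x\in D\cap\bigcap\Gamma_1-\bigcup\Gamma_2 = D'$ is itself in $\mathbb{D}$; since $A\in\Gamma_1=\Gamma_1'$, we are in the case $A\notin\Gamma_2$, and $(D')^d=\bigcap\Gamma_1-\bigcup\Gamma_2-D$ does not contain $x$. The honest fix: every $x\in A$ lies in $D$ or not; if $x\notin D$ use the partition above to land in $\bigcap\Gamma_1-\bigcup\Gamma_2-D$; if $x\in D$ then $x\in A\cap D$, and taking the partition of $F$ by membership of $x$ and using that $A$ is on the $x$-side shows $x\in D\cap\bigcap\Gamma_1-\bigcup\Gamma_2$; to get this as some $(D'')^d$ we instead use a partition in which $A\in\Gamma_2$, namely swap so that the relevant $\mathbb D$-set of the form $D\cap\bigcap\Gamma_2-\bigcup\Gamma_1$ contains $x$ with $A\in\Gamma_2$ — this works because $x\in A$ forces nothing about which side $A$ must be on once we only demand $x\in\bigcap\Gamma_2$ and $x\notin\bigcup\Gamma_1$, which holds for $\Gamma_2=\{E:x\in E\}\ni A$. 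Hence $x\in(D')^d$ with $D'=D\cap\bigcap\Gamma_2-\bigcup\Gamma_1\in\mathbb D$ and $A\in\Gamma_2$.

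\textbf{Clause (3): the pseudo dual sets of two distinct members of $\mathbb{D}$ are disjoint.} Let $\{\Gamma_1,\Gamma_2\}$ and $\{\Gamma_3,\Gamma_4\}$ be distinct partitions of $F$, with associated $\mathbb D$-sets $D_1=D\cap\bigcap\Gamma_2-\bigcup\Gamma_1$ and $D_2=D\cap\bigcap\Gamma_4-\bigcup\Gamma_3$. Suppose $x\in(D_1)^d\cap(D_2)^d$. By Clause (1), $x\in A$. Pick $E\in\Gamma_1\triangle\Gamma_3$, say $E\in\Gamma_1\setminus\Gamma_3$, so $E\in\Gamma_4$. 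In each of the four combinations of ``$A\in\Gamma_2$ or not'' and ``$A\in\Gamma_4$ or not'', I would use that $x\in(D_1)^d$ forces $x\notin\bigcup\Gamma_1\supseteq E$ (when $A\notin\Gamma_2$, directly; when $A\in\Gamma_2$, from $x\in D_1\subseteq\bigcap\Gamma_2$ and $x\notin\bigcup\Gamma_1$), hence $x\notin E$; while $x\in(D_2)^d$ forces $x\in\bigcap\Gamma_4\subseteq E$ (when $A\notin\Gamma_4$, directly; when $A\in\Gamma_4$, from $x\in D_2\subseteq\bigcap\Gamma_4$), hence $x\in E$ — a contradiction. This mirrors the disjointness argument already used for $\bigcup\Gamma-D$ in the proof of Proposition \ref{the theorem before 23.3.16}. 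Combining the three clauses, $\mathbb{A}$ is a partition of $A$; then the cardinality counting in the paragraph preceding the claim yields $|(D-\bigcup F)^d|=|D-\bigcup F|$, i.e.\ $|\bigcap F-D|=|D-\bigcup F|$, completing Case B and hence the proof of Proposition \ref{a condition for adding a subset for KE}.

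The main obstacle is Clause (2): one has to be careful that every $x\in A$, whether or not it lies in $D$ and whatever its membership pattern across $F$, is captured by \emph{some} pseudo dual set, which means choosing the partition so that $A$ lands on the correct side ($A\in\Gamma_2$ when $x\in D$, $A\in\Gamma_1$ when $x\notin D$), and separately handling the degenerate cases $x\in\bigcap F$ and the excluded block $D-\bigcup F$. Clauses (1) and (3) are routine case analyses of the two-line definition of $(\cdot)^d$.
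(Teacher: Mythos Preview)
Your Clause~(1) is fine, and Clause~(2), though written in a meandering way with a mid-paragraph change of labeling, eventually arrives at exactly the paper's argument: take $\Gamma_2=\{B\in F:x\in B\}$ so that $A\in\Gamma_2$; if $x\in D$ then $x\in D\cap\bigcap\Gamma_2-\bigcup\Gamma_1\in\mathbb{A}$, while if $x\notin D$ then $x\in\bigcap\Gamma_2-\bigcup\Gamma_1-D$, which is the pseudo dual of the $\mathbb{D}$-element attached to the swapped partition.

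The real gap is in Clause~(3). You claim that $x\in(D_1)^d$ forces $x\notin\bigcup\Gamma_1$ ``when $A\notin\Gamma_2$, directly''; but when $A\notin\Gamma_2$ the pseudo dual is $\bigcap\Gamma_1-\bigcup\Gamma_2-D$, which yields $x\in\bigcap\Gamma_1$, the \emph{opposite} conclusion. The same reversal occurs in your treatment of $(D_2)^d$ when $A\notin\Gamma_4$. Consequently your single-witness contradiction via $E\in\Gamma_1\setminus\Gamma_3$ breaks down: for instance if $A\in\Gamma_1\cap\Gamma_4$, both pseudo duals force $x\in E$, and the actual contradiction comes from $D$-membership ($x\notin D$ from the first, $x\in D$ from the second), not from $E$ at all. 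The paper therefore does a genuine three-way case split on the position of $A$: when $A$ lies in $\Gamma_2^a$ but not $\Gamma_2^b$ (or vice versa) one pseudo dual sits inside $D$ and the other is disjoint from $D$; only when $A$ lies on the same labeled side in both partitions does one separate using some $B\in F$ in the symmetric difference. A uniform ``pick $E\in\Gamma_1\triangle\Gamma_3$'' cannot work because in the mixed cases the separating set is $D$, not any member of $F$.
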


\begin{proof}
(1) Clear.

(2) Let $x \in A$. Define $\Gamma_2=\{B \in F:x \in B\}$ and $\Gamma_1=F-\Gamma_2$. So $x \in \bigcap \Gamma_2-\bigcup \Gamma_1$ and $A \in \Gamma_2$. If $x \in D$ then $x \in D \cap \bigcap \Gamma_2-\bigcup \Gamma_1 \in \mathbb{A}$. So $x \in \bigcup \mathbb{A}$. 

If $x \notin D$ then $x \in \bigcap \Gamma_2-\bigcup \Gamma_1-D \in \mathbb{A}$, because $\bigcap \Gamma_2-\bigcup \Gamma_1-D$ is the pseudo dual of $D \cap \bigcap \Gamma_1-\bigcup \Gamma_2$.  So $x \in \bigcup \mathbb{A}$.

(3) Take two different partitions $\{\Gamma_1^a,\Gamma_2^a\}$ and $\{\Gamma_1^b,\Gamma_2^b\}$ of $F$. 

If $A \in \Gamma_2^a-\Gamma_2^b$ then the pseudo dual sets are $D \cap \bigcap \Gamma_2^a-\bigcup \Gamma_1^a$ and $\bigcap \Gamma_1^b-\bigcup \Gamma_2^b-D$. The first is included in $D$ while the second is disjoint to $D$.

If $A \in \Gamma_2^a \cap \Gamma_2^b$ then the pseudo dual sets are $D \cap \bigcap \Gamma_2^a-\bigcup \Gamma_1^a$ and $D \cap \bigcap \Gamma_2^b-\bigcup \Gamma_1^b$. Since $\Gamma_2^a \neq \Gamma_2^b$, there is a set $B \in \Gamma_2^a-\Gamma_2^b$ or vice versa. So one pseudo dual set is included in $B$, while the other is disjoint to $B$.

If $A \notin \Gamma_2^a \cup \Gamma_2^b$ then the pseudo dual sets are $\bigcap \Gamma_1^a-\bigcup \Gamma_2^a-D$ and $\bigcap \Gamma_1^b-\bigcup \Gamma_2^b-D$. So again there is a set $B \in \Gamma_1^a-\Gamma_1^b$ or vice versa. Therefore one pseudo dual set is included in $B$, while the other is disjoint to $B$.
\end{proof}
Proposition \ref{a condition for adding a subset for KE} is proved.
\end{proof}

By the following proposition, if $F$ is a maximal hke collection, then for each $A \in F$, $f_A$ is
onto the power set of $A$. Moreover, it specifies several properties of a set $D$ that can be added to $F$, if $f_A$ is not a surjection.
\begin{proposition}\label{the function is a surjection}
Let $F$ be an hke collection. Let $E \subseteq A \in F$. Then we can find a set $D$ such that the following holds:
\begin{enumerate}
\item $A \cap D=E$, 
\item $F \cup \{D\}$ is an hke collection and \item $|D-\bigcup F|=|\bigcap F-E|$.
\end{enumerate}
 In particular, if $\bigcap F=\emptyset$ then $D \subseteq \bigcup F$.

\end{proposition}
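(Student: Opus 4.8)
The plan is to construct $D$ explicitly. We want $A\cap D=E$, so $D$ must contain $E$ and avoid $A-E$; the freedom is in how $D$ meets $\bigcup F-A$ and in how many brand-new elements (outside $\bigcup F$) we throw in. The guiding principle comes from Proposition \ref{a condition for adding a subset for KE}: for $F\cup\{D\}$ to be an hke collection it suffices that $|D|=\alpha$ and that for every partition $\{\Gamma_1,\Gamma_2\}$ of $F-\{A\}$ with $\Gamma_2\neq\emptyset$ we have
$$|A\cap D\cap\bigcap\Gamma_1-\bigcup\Gamma_2|=|\bigcap\Gamma_2-\bigcup\Gamma_1-A-D|.$$
Since $A\cap D=E$, the left-hand side is $|E\cap\bigcap\Gamma_1-\bigcup\Gamma_2|$, which is already determined by $E$ and is independent of the choice of $D$ extending $E$. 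So the whole problem reduces to choosing the part of $D$ lying in $\bigcup F-A$ (call it $D'$) so that, for every such partition, $|\bigcap\Gamma_2-\bigcup\Gamma_1-A-D'|$ equals that prescribed value.

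First I would record what the hke property of $F$ already gives us. Applying Proposition \ref{a condition for adding a subset for KE}'s style of computation to $A$ inside $F$ — more precisely, using that $F$ is hke and Theorem \ref{equivalent definitions of an hke collection} — for each partition $\{\Gamma_1,\Gamma_2\}$ of $F-\{A\}$ with $\Gamma_2\neq\emptyset$ one has
$$|A\cap\bigcap\Gamma_1-\bigcup\Gamma_2|=|\bigcap\Gamma_2-\bigcup\Gamma_1-A|.$$
Now on the left, $A\cap\bigcap\Gamma_1-\bigcup\Gamma_2$ splits as the disjoint union of $E\cap\bigcap\Gamma_1-\bigcup\Gamma_2$ and $(A-E)\cap\bigcap\Gamma_1-\bigcup\Gamma_2$. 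So the target quantity $|E\cap\bigcap\Gamma_1-\bigcup\Gamma_2|$ equals $|\bigcap\Gamma_2-\bigcup\Gamma_1-A|-|(A-E)\cap\bigcap\Gamma_1-\bigcup\Gamma_2|$. Thus Equality (*) for our candidate $D$ becomes the requirement that $D'$ (the part of $D$ in $\bigcup F-A$, which is disjoint from $\bigcup F\cap A$ issues) carve out of $\bigcap\Gamma_2-\bigcup\Gamma_1-A$ exactly a subset of size $|(A-E)\cap\bigcap\Gamma_1-\bigcup\Gamma_2|$. The natural move is to \emph{define} $D'$ via a bijection: use Proposition \ref{the function is an injection}/surjection-type bookkeeping, or better, directly match each element of $A-E$ to an element of $\bigcup F-A$ that sits in the same ``cell'' of the partition lattice, i.e. lies in exactly the same members of $F$ outside $A$. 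Concretely, partition $\bigcup F$ into cells $\{x:x\in B\text{ for }B\in\Gamma\}\setminus\bigcup(F-\Gamma)$ indexed by $\Gamma\subseteq F$; the hke equalities say each cell containing $A$ has the same size as its ``dual'' cell not containing $A$ (this is exactly Equality \ref{equality 1}). Pick, for each $x\in A-E$, its dual element $x^d$ in the dual cell — this is an injection $A-E\to\bigcup F-A$ by the disjointness-of-cells argument already used in the proof of Proposition \ref{a condition for adding a subset for KE} — and set $D=E\cup\{x^d:x\in A-E\}$, possibly augmented by new elements if duals run out, which is where $\bigcap F$ enters.

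Then I would verify the three conclusions. Claim (1), $A\cap D=E$: immediate since every $x^d\notin A$. Claim (2): by construction every Equality (*) instance holds, so Proposition \ref{a condition for adding a subset for KE} applies once we also check $|D|=\alpha$; but $|D|=|E|+|A-E|=|A|=\alpha$ provided the dual assignment is a genuine bijection onto the relevant pieces, and the only place it can fail to land inside $\bigcup F$ is the cell $\bigcap F$ (dual to the cell $A-\bigcup(F-\{A\})$ of elements in $A$ and nothing else outside... ), which forces us to use $|\bigcap F-E|$ new elements — giving Claim (3), $|D-\bigcup F|=|\bigcap F-E|$, and in particular $D\subseteq\bigcup F$ when $\bigcap F=\emptyset$. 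The main obstacle I anticipate is the careful cell-by-cell bookkeeping at the two ``exceptional'' cells $\bigcap F$ and $A\cap(\bigcup F)^c$-type remnants: one must check that the dual map is total and injective everywhere except possibly out of the $\bigcap F$-cell, handle that cell by introducing exactly $|\bigcap F-E|$ fresh elements, and confirm that the resulting $D$ still satisfies Equality (*) for \emph{every} partition (the new elements lie outside $\bigcup F$ hence outside every $\bigcup\Gamma_1$ and $\bigcap\Gamma_2$, so they are harmless). This is essentially the ``pseudo dual set'' argument of Proposition \ref{a condition for adding a subset for KE} run in reverse, packaging $\mathbb{D}$ and $\mathbb{A}$ into the definition of $D$ rather than deriving a size identity from a given $D$.
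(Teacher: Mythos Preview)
Your plan is essentially the paper's: both decompose $\bigcup F$ into Venn atoms indexed by the subsets $\Gamma\subseteq F$ containing $A$, use Equality~\ref{equality 1} to equate the size of each atom with its dual, construct $D$ atom-by-atom so that the criterion of Proposition~\ref{a condition for adding a subset for KE} is satisfied, and patch the exceptional atom $\bigcap F$ with $|\bigcap F-E|$ fresh points. The only difference is cosmetic bookkeeping --- the paper starts from all of $\bigcup F-A$ and \emph{removes} $e_\Gamma=|E\cap\bigcap\Gamma-\bigcup(F-\Gamma)|$ points from each dual atom, whereas you start from $E$ and \emph{add} one dual point $x^d$ per $x\in A-E$; these yield the same $D$ up to the arbitrary choices, though note your parenthetical is off: under the $\Gamma\leftrightarrow F-\Gamma$ duality the atom $\bigcap F$ (for $\Gamma=F$) is dual to the complement of $\bigcup F$, not to $A-\bigcup(F-\{A\})$, and that is exactly why the fresh elements appear there.
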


It is recommended to assume $\bigcap F=\emptyset$ at the first reading of the following proof. 
\begin{proof} 
Let us give a rough description of the proof: The needed $D$ should be a set of cardinality $\alpha$ that includes $E$ with $A \cap D=E$. So $D$ will be the union of $E$ with $\alpha-|E|$ elements that are not in $A$. For each non-empty subset $\Gamma$  of $F-\{A\}$ the `dual set' has the same number of elements. But we subtract $e_\Gamma$ elements from the dual set, so that at the end, we will subtract $|E|$ elements. But if $\Gamma=\emptyset$ then there is no dual set and we have to choose elements outside of $\bigcup F$.

For every subset $\Gamma$ of $F$ such that $A \in \Gamma$, we define $e_\Gamma$ as follows: Define $\Gamma_1=\Gamma$ and $\Gamma_2=F-\Gamma_1$. Define $$e_\Gamma=:|E \cap \bigcap \Gamma_1-\bigcup \Gamma_2|.$$ 

Since $E \subseteq A$, the following equality holds: $$(1) \sum_{A \in \Gamma \subseteq F}e_\Gamma=|E|.$$

By Proposition \ref{equivalent definitions of a KE collection}, if $\Gamma_2$ is not empty (or equivalently, $\Gamma \neq F$) then
$$|\bigcap \Gamma_2-\bigcup \Gamma_1|=|\bigcap \Gamma_1-\bigcup \Gamma_2| \geq e_\Gamma.$$

We choose a set, $E_\Gamma$, of $e_\Gamma$ elements in $\bigcap \Gamma_2-\bigcup \Gamma_1$ for each such $\Gamma$ ($E_\Gamma$ is the set of elements in $\bigcap \Gamma_2-\bigcup \Gamma_1$ that are not going to be in the needed set, $D$). 
 
Let $C$ be a set of cardinality $|\bigcap F|-e_F=|\bigcap F-E|$ that is disjoint to $\bigcup F$. Define $$D=:E \cup [\bigcup F-A-\bigcup_{A \in \Gamma \subsetneq F} E_\Gamma] \cup C.$$ 

Clearly $D \cap A=E$.


It remains to show that $F \cup \{D\}$ is an hke collection. In order to apply Proposition \ref{a condition for adding a subset for KE}, we firstly have to prove that $|D|=\alpha$.

$$|D|=|E|+|\bigcup F-A|-\Sigma_{A \in \Gamma \subsetneq F}e_\Gamma+|C|.$$

By Equality (1) and the equality $|C|=|\bigcap F|-e_F$, we get 

$$|D|=|E|+|\bigcup F-A|-\Sigma_{A \in \Gamma \subset F}e_\Gamma+|\bigcap F|=|\bigcup F|+|\bigcap F|-|A|=2\alpha-\alpha=\alpha.$$

The equality $|D|=\alpha$ is proved. It remains to show that the second condition of Proposition \ref{a condition for adding a subset for KE} is satisfied.

Let $\{\Gamma_1,\Gamma_2\}$ be a partition of $F$ such that $A \in \Gamma_1$ and $\Gamma_2 \neq \emptyset$. 
  
We have to show that

$$|D \cap \bigcap \Gamma_1-\bigcup \Gamma_2|=|\bigcap \Gamma_2-\bigcup \Gamma_1-D|.$$

Define $\Gamma=\Gamma_1$. Since $D \cap A=E$ and $A \in \Gamma_1$, 
 $$|D \cap \bigcap \Gamma_1-\bigcup \Gamma_2|=|E \cap \bigcap \Gamma_1-\bigcup \Gamma_2|=e_\Gamma.$$

It remains to show that $$|\bigcap \Gamma_2-\bigcup \Gamma_1-D|=e_\Gamma.$$ 

By the definition of $D$ and the fact that the sets $E$ and $C$ are disjoint to the set $\bigcap \Gamma_2-\bigcup \Gamma_1$, we have

$$\bigcap \Gamma_2-\bigcup \Gamma_1-D=\bigcap \Gamma_2-\bigcup \Gamma_1-(\bigcup F-A-\bigcup_{A \in \Gamma' \subsetneq F}E_{\Gamma'}).$$

Since $\bigcap \Gamma_2-\bigcup \Gamma_1 \subseteq \bigcup F-A$, the right side of the last equality equals

$$\bigcap \Gamma_2-\bigcup \Gamma_1-(\bigcap \Gamma_2-\bigcup \Gamma_1-\bigcup_{A \in \Gamma' \subsetneq F}E_{\Gamma'}),$$

namely, $$(\bigcap \Gamma_2-\bigcup \Gamma_1) \cap \bigcup_{A \in \Gamma' \subsetneq F}E_{\Gamma'}=E_\Gamma$$

(the last equality holds, because for each $\Gamma' \neq \Gamma$, the sets $E_{\Gamma'}$ and $\bigcap \Gamma_2-\bigcup \Gamma_1$ are disjoint). 

Hence, 
$$|\bigcap \Gamma_2-\bigcup \Gamma_1-D|=|E_\Gamma|=e_\Gamma.$$

\end{proof}

\begin{remark}
We now can present a new proof of Proposition \ref{maximal implies bigcap=emptyset}. Assume that $\bigcap F \neq \emptyset$. Define $E=\emptyset$. Let $A \in F$. $E \subseteq A$ is not in the image of $f_A$, because $A \cap D =\bigcap F \neq \emptyset$ for each $D \in F$. So $f_A$ is not a surjection and by Proposition \ref{the function is a surjection}, $F$ is not a maximal hke collection. 
\end{remark}

\begin{theorem}\label{maximal iff has 2^alpha elements}
Let $F$ be an hke collection. Then $F$ is a maximal hke collection if and only if $|F|=2^{\alpha}$.
\end{theorem}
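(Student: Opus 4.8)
The plan is to establish the two implications separately, using the machinery of the function $f_A$ already developed. For the direction ``$|F| = 2^\alpha \Rightarrow F$ is maximal'', there is nothing to do: this is exactly Proposition \ref{if |F|=2^alpha then F is maximal}, which follows from the injectivity of $f_A$ (Proposition \ref{the function is an injection}), since any hke collection has at most $2^\alpha$ members and so $F$ admits no proper hke extension.

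For the converse, I would argue: suppose $F$ is a maximal hke collection. First invoke Proposition \ref{maximal implies bigcap=emptyset} to get $\bigcap F = \emptyset$. Now fix any $A \in F$; by Proposition \ref{the function is an injection}, $f_A \colon F \to P(A)$ is injective, so it suffices to show it is surjective. Given any $E \subseteq A$, apply Proposition \ref{the function is a surjection}: there is a set $D$ with $A \cap D = E$ and $F \cup \{D\}$ an hke collection. Since $F$ is maximal, $D \in F$, and hence $E = A \cap D = f_A(D)$ lies in the image of $f_A$. Therefore $f_A$ is a bijection from $F$ onto $P(A)$, giving $|F| = |P(A)| = 2^{|A|} = 2^\alpha$, where $|A| = \alpha$ by Proposition \ref{1}.

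I do not expect any genuine obstacle here, since all the substantive work—injectivity of $f_A$, the fact that $\bigcap F = \emptyset$ for maximal $F$, and above all the surjectivity-witnessing construction of $D$ in Proposition \ref{the function is a surjection}—has already been carried out in the preceding propositions. The only point requiring a little care is the logical shape of the argument: one must phrase the maximality hypothesis correctly, namely that \emph{every} hke collection properly containing $F$ fails to exist, so that the set $D$ produced by Proposition \ref{the function is a surjection} (which need not a priori be distinct from members of $F$, but if it were equal to some member the conclusion $E \in \rg(f_A)$ would hold anyway) must already belong to $F$. Thus the bijection $f_A$ is obtained and the cardinality count is immediate.
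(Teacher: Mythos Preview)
Your proof is correct and follows essentially the same route as the paper: both directions are handled by citing Proposition~\ref{if |F|=2^alpha then F is maximal} for one implication and, for the other, by combining the injectivity of $f_A$ (Proposition~\ref{the function is an injection}) with Proposition~\ref{the function is a surjection} plus maximality to conclude that $f_A$ is a bijection onto $P(A)$. The only difference is that your invocation of Proposition~\ref{maximal implies bigcap=emptyset} is unnecessary here---Proposition~\ref{the function is a surjection} applies to any hke collection, and the maximality of $F$ alone forces $D\in F$---so you may simply drop that step.
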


\begin{proof}
If $|F|=2^\alpha$ then by Proposition \ref{if |F|=2^alpha then F is maximal}, $F$ is a maximal hke collection.

Conversely, assume that $F$ is a maximal hke collection. Fix $A \in F$. By Propositions \ref{the function is an injection} and \ref{the function is a surjection}, $f_A:F \to P(A)$ is a bijection. So $|F|=2^{\alpha}$.
\end{proof}

\section{The Uniqueness of a Maximal hke Collection}
In this section (Theorem \ref{another characterization of a maximal KE collection}), we present two similar characterizations of a maximal hke collection and use it to prove the uniqueness of a maximal hke collection for a fixed $\alpha$ (Theorem \ref{uniqueness of maximal}). Moreover, this characterization yields a connection between maximal hke \emph{collections} and minimal KE \emph{graphs} (Theorem \ref{bipartite}).

The following definition is needed in order to state Theorem \ref{another characterization of a maximal KE collection}.
\begin{definition}
Let $F$ be an hke collection. We define a relation $\approx$ on $\bigcup F$ as follows: $x \approx y$ if and only if $x=y$ or $A-D=\{x\}$ and $D-A=\{y\}$ for some $A,D \in F$. If $F$ is not clear from the context, we will write $\approx_F$. If $F$ is a maximal hke collection, $x \neq y$ and $x \approx y$ then $y$ is said to be \emph{the dual} of $x$ (this name is justified in the proof of Theorem \ref{another characterization of a maximal KE collection}). If $F$ is not clear from the context, we will write `the $F$-dual'.
\end{definition}

Clauses (2) and (3) of Theorem \ref{another characterization of a maximal KE collection} are similar. In Clause (2), we require the existence of an equivalence relation satisfying several specific properties. In Clause (3), we require that $\approx$ will satisfy these properties.
\begin{theorem}\label{another characterization of a maximal KE collection}
Let $F$ be a relevant collection with $\alpha(F)=\alpha$. The following things are equivalent:
\begin{enumerate}
\item $F$ is a maximal hke collection. \item $|\bigcup F|=2\alpha$ and there is an equivalence relation on $\bigcup F$ with $\alpha$ equivalence classes, each has two elements such that $F$ equals the collection of subsets $B$ of $\bigcup F$ such that $|B|=\alpha$ and $B$ intersects each equivalence class. \item $|\bigcup F|=2\alpha$, $\approx$ has $\alpha$ equivalence classes, each equivalence class has two elements and $F$ equals the collection of subsets $B$ of $\bigcup F$ such that $|B|=\alpha$ and $B$ intersects each $\approx$-equivalence class.
\end{enumerate}
\end{theorem}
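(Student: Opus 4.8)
The plan is to prove the cycle $(2)\Rightarrow(1)$, $(1)\Rightarrow(3)$, $(3)\Rightarrow(2)$; the last implication is trivial since $\approx$ is itself an equivalence relation (once we know it is one with the stated properties), so the real work is in the first two. Throughout, write $n=|\bigcup F|$.

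For $(2)\Rightarrow(1)$: suppose $E$ is the given equivalence relation with $\alpha$ classes of size $2$, and $F$ is exactly the family of $\alpha$-element transversal-ish sets $B\subseteq\bigcup F$ meeting every class. First I would observe that this $F$ is literally (isomorphic to) the typical collection for $\alpha$ of Definition \ref{definition the typical collection}: list the classes as $\{i,i+\alpha\}$ for $i\in[\alpha]$, and note that an $\alpha$-set meeting every $2$-element class must contain \emph{exactly} one element of each class (if it contained both elements of some class it would have fewer than $\alpha$ elements available for the remaining $\alpha-1$ classes — a counting argument), so $B\leftrightarrow\{i:i\in B\}$ exhibits the bijection with the typical collection. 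Then by Proposition \ref{the typical example of a maximal KE collection} $F$ is an hke collection with $|F|=2^\alpha$, and by Proposition \ref{if |F|=2^alpha then F is maximal} it is maximal.

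For $(1)\Rightarrow(3)$: let $F$ be a maximal hke collection. By Proposition \ref{maximal implies bigcap=emptyset}, $\bigcap F=\emptyset$, and by Theorem \ref{maximal iff has 2^alpha elements}, $|F|=2^\alpha$. First I would establish $|\bigcup F|=2\alpha$: fix $A\in F$; by Proposition \ref{the function is a surjection} (with $\bigcap F=\emptyset$) every subset $E\subseteq A$ arises as $A\cap D$ for some $D\in F$ with $D\subseteq\bigcup F$, and in particular, taking $E=A\setminus\{x\}$ for each $x\in A$, we get a $D_x\in F$ with $A\setminus D_x=\{x\}$, hence $D_x\setminus A$ is a single element $x'$ (since $|D_x|=|A|=\alpha$); this shows each $x\in A$ has a dual $x'\notin A$, and a parallel argument starting from the set $D$ with $A\cap D=\emptyset$ shows $\bigcup F=A\cup\{x':x\in A\}$, giving $|\bigcup F|\le 2\alpha$; the reverse inequality and the fact that the pairing $x\mapsto x'$ is well-defined and involutive I would extract from Proposition \ref{the function is an injection} (distinct $x$ give distinct $D_x$ and distinct duals). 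This simultaneously shows $\approx$ has exactly $\alpha$ classes $\{x,x'\}$, each of size $2$. It then remains to check that $F$ equals the family of $\alpha$-sets meeting every $\approx$-class: the inclusion $\subseteq$ is immediate (any $B\in F$ has $|B|=\alpha$ and cannot miss a class $\{x,x'\}$, else... — here use that for $A\in F$, if $B$ contained neither $x$ nor $x'$ then $A\cap B\subseteq A\setminus\{x\}$ and also the dual constraint forces a size contradiction via the bijection $f_A$); and for $\supseteq$, both families have the same cardinality $2^\alpha$ (the transversal family has $2^\alpha$ members by the counting argument above), so equality of finite sets with one contained in the other follows.

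The main obstacle I anticipate is the careful bookkeeping in $(1)\Rightarrow(3)$ showing that the duality map $x\mapsto x'$ is a well-defined involution on all of $\bigcup F$ (not just on a fixed $A$) and that $\bigcup F$ is exhausted by $A$ together with the duals of elements of $A$ — i.e. that there are no "extra" elements of $\bigcup F$ lying in no convenient $A\cap D$ relationship. This is where Proposition \ref{the function is a surjection}, applied for enough subsets $E\subseteq A$, together with the injectivity of $f_A$, has to be leveraged precisely; once the pairing is in hand, identifying $F$ with the transversal family and hence with the typical collection is a clean counting argument. A secondary point to handle with care is that $\approx$ as \emph{defined} (via $A-D=\{x\},D-A=\{y\}$) really is transitive on a maximal hke collection — this is what makes Clause (3) meaningful and is exactly what the above analysis delivers, since every class turns out to be a single pair $\{x,x'\}$.
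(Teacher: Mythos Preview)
Your cycle $(3)\Rightarrow(2)\Rightarrow(1)\Rightarrow(3)$ matches the paper's, and your $(2)\Rightarrow(1)$ is the paper's argument verbatim. The gap is in $(1)\Rightarrow(3)$, precisely where you flag the main obstacle, and the tools you name (Propositions~\ref{the function is an injection} and~\ref{the function is a surjection} alone) do not close it.

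After fixing $A\in F$ and producing for each $x\in A$ the set $D_x$ with $A\setminus D_x=\{x\}$ and $D_x\setminus A=\{x'\}$, you have a candidate pairing of $\bigcup F$ into $\alpha$ two-element blocks. Two things then remain: (i) every $B\in F$ is a transversal for \emph{this} pairing; (ii) the relation $\approx$ appearing in Clause~(3)---which quantifies over \emph{all} pairs $A',D'\in F$, not just your fixed $A$---coincides with this pairing. Neither follows from injectivity of $f_A$. For (i), your sketch ``$A\cap B\subseteq A\setminus\{x\}$ \ldots\ forces a size contradiction via $f_A$'' is not an argument: what is actually needed is Equality~\ref{equality 1} applied to $\Gamma_1=\{A\}$, $\Gamma_2=\{B,D_x\}$, which yields $|\{x\}\setminus B|=|\{x'\}\cap B|$, hence exactly one of $x,x'$ lies in $B$. (The same computation, not injectivity, is also what shows distinct $x$ have distinct duals.) For (ii), injectivity of $f_A$ says nothing about witnesses $A',D'$ with $A'\neq A$, so it cannot rule out $\approx$ being strictly coarser than your pairing. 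The paper settles both points simultaneously via Claim~\ref{for some implies for all}: for $x\neq y$, one has $x\approx y$ if and only if $x\in B\leftrightarrow y\notin B$ for every $B\in F$, proved by exactly the triple computation above. With that characterization in hand, transitivity of $\approx$, the class structure, and the inclusion of $F$ in the transversal family are each one line; without it your final sentence is circular---you infer transitivity of $\approx$ from ``every class turns out to be a single pair $\{x,x'\}$'', which is the statement in question.
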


\begin{proof}
Clause (3) implies Clause (2) trivially. Assume that Clause (2) holds. By renaming, without loss of generality, $\bigcup F=[2\alpha]$ and the equivalence classes are the pairs of the form $\{m,m+\alpha\}$ for $1 \leq m \leq \alpha$. Then $F$ is the typical collection for $\alpha$. So by Proposition \ref{the typical example of a maximal KE collection}, $F$ is an hke collection and $|F|=2^\alpha$. So by Proposition \ref{if |F|=2^alpha then F is maximal}, Clause (1) holds.

It remains to prove that Clause (1) implies Clause (3). Assume that $F$ is a maximal hke collection. In Claims \ref{A and D of x}-\ref{approx}, we prove that $\approx$ is an equivalence relation and study its properties.
\begin{claim}\label{A and D of x}
For every $x \in \bigcup F$, there are two sets $A$ and $D$ in $F$ such that $A-D=\{x\}$.
\end{claim}

\begin{proof}
Take $A \in F$ such that $x \in A$. By Proposition \ref{the function is a surjection}, $f_A$ is a surjection. So there is a set $D \in F$ such that $A \cap D=A-\{x\}$. Therefore $A-D=\{x\}$.
\end{proof}

By Claim \ref{approx is transitive and more} (below), the relation $\approx$ is an equivalence relation. By the following claim, each $\approx$-equivalence class has two elements at least.
\begin{claim}\label{at least 2}
For every element $x \in \bigcup F$, there is an element $y \in \bigcup F$ such that $x \approx y$ and $x \neq y$.
\end{claim}

\begin{proof}
By Claim \ref{A and D of x}, there are two sets, $A$ and $D$ in $F$, such that $A-D=\{x\}$. By Theorem \ref{equivalent formulations of a KE collection}, $|D-A|=|A-D|=1$. Let $y$ be the unique element in $D-A$. Then $x \approx y$. 
\end{proof}

\begin{claim}\label{for some implies for all}
Let $x,y$ be two different elements in $\bigcup F$. Then $x \approx y$ if and only if $x \in B \leftrightarrow y \notin B$ holds for every $B \in F$.
\end{claim}

\begin{proof}
First assume that $x \in B \leftrightarrow y \notin B$ holds for every $B \in F$. By Claim \ref{A and D of x}, there are two sets $A$ and $D$ in $F$, such that $A-D=\{x\}$. So $|D-A|=|A-D|=1$. But $y \in A-D$. Therefore $D-A=\{y\}$ and $x \approx y$. 

Conversely, assume that $x \approx y$. So there are $A$ and $D$ in $F$ such that $A-D=\{x\}$ and $D-A=\{y\}$. Let $B \in F$. We show that $x \in B$ if and only if $y \notin B$ using the following three equalities:
\begin{enumerate}
\item $A \cap B-D=(A-D) \cap B= \{x\} \cap B$, 
\item $|A \cap B-D|=|D-A-B|$ (By Theorem \ref{equivalent formulations of a KE collection}) and \item $D-A-B=\{y\}-B$ .
\end{enumerate}

$x \in B$ if and only if $\{x\} \cap B \neq \emptyset$ if and only if $|A \cap B-D| \neq 0$ if and only if $|D-A-B| \neq 0$ if and only if $\{y\}-B \neq \emptyset$, namely $y \notin B$.

\end{proof}
 
By the following claim, the relation $\approx$ is transitive, so it is an equivalence relation. Moreover, each equivalence class has two elements at most. 
\begin{claim}\label{approx is transitive and more}
If $x \neq y$, $x \approx y$, $y \neq z$ and $y \approx z$ then $x=z$.
\end{claim}

[Here is another formulation of Claim \ref{approx is transitive and more}: if $x$ is the dual of $y$ and $z$ is the dual of $y$ then $x=z$.] 

\begin{proof}
By Claim \ref{for some implies for all}, for every $B \in F$, we have $$x \in B \Leftrightarrow y \notin B \Leftrightarrow z \in B.$$

By Claim \ref{A and D of x}, there are two sets, $A,D \in F$ such that $A-D=\{x\}$. Since $x \in A-D$, we have $z \in A-D$, as well. So $z \in \{x\}$, namely, $z=x$.
\end{proof}

\begin{claim}\label{approx}
The relation $\approx$ is an equivalence relation. It has $\alpha$ equivalence classes, each has two elements exactly.
\end{claim}

\begin{proof}
By its definition, the relation $\approx$ is reflexive and symmetric. By Claim \ref{approx is transitive and more}, $\approx$ is transitive. So it is an equivalence relation. By Claim \ref{at least 2}, each equivalence class of $\approx$ has two elements at least. By Claim \ref{approx is transitive and more} each equivalence class of $\approx$ has two elements at most. So each equivalence class of $\approx$ has exactly two elements. 

We now show that there are exactly $\alpha$ $\approx$-equivalence classes. Fix $A \in F$. Let $f$ be the function of the set of $\approx$-equivalence classes to $A$, such that $f(\{x,y\})$ is the unique element in $\{x,y\} \cap A$. The number of $\approx$-equivalence classes is $\alpha$, because $f$ is a bijection: Let $\{x,y\}$ be an $\approx$-equivalence class. By Claim \ref{for some implies for all}, $x \notin A$ if and only if $y \in A$. So $f$ is well-defined. Since each $x \in A$ is in some $\approx$-equivalence class, $f$ is surjective. Since an element in $A$ cannot be in two different $\approx$-equivalence classes at the same time, $f$ is injective.  
\end{proof} 

Let $F'$ be the collection of subsets $B$ of $\bigcup F$ such that $|B|=\alpha$ and $B$ intersects each equivalence class of $\approx$. By Claim \ref{approx}, $F'$ is the collection of subsets $B$ of $\bigcup F$ such that $|B \cap \{x,y\}|=1$ for each equivalence class $\{x,y\}$ of $\approx$. So $|F'|=2^\alpha$. 

It remains to prove that $F'=F$. Since $F$ is a maximal hke collection, by Theorem \ref{maximal iff has 2^alpha elements}, $|F|=2^\alpha$ too. So it is enough to prove that $F \subseteq F'$. Let $B \in F$. So $|B|=\alpha$. Let $\{x,y\}$ be an $\approx$-equivalence relation. Since $x \approx y$, By Claim \ref{for some implies for all}, $x \in B$ if and only $y \notin B$. So $\{x,y\} \cap B \neq \emptyset$. Therefore $B \in F'$. Hence, $F'=F$. Clause (3) is proved. So Theorem \ref{another characterization of a maximal KE collection} is proved.
\end{proof}
 
\begin{definition}
Let $F_1$ and $F_2$ be two collections of sets. We say that $F_1$ and $F_2$ are \emph{isomorphic} when: there is a bijection $g:\bigcup F_1 \to \bigcup F_2$ such that $g[S] \in F_2$ for every $S \in F_1$ and $g^{-1}[S] \in F_1$ for every $S \in F_2$.
\end{definition}

By the following theorem, up to isomorphism, there is a unique maximal hke collection with $\alpha$ fixed: 
\begin{theorem}\label{uniqueness of maximal}
If $F_1$ and $F_2$ are two maximal hke collections with $\alpha(F_1)=\alpha(F_2)$ then $F_1 \cong F_2$. 
\end{theorem}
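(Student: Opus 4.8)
The plan is to show that \emph{every} maximal hke collection with parameter $\alpha$ is isomorphic to one fixed object, namely the typical collection for $\alpha$ of Definition \ref{definition the typical collection}; then $F_1 \cong F_2$ follows at once, since $\cong$ is obviously an equivalence relation (reflexivity, symmetry and transitivity being immediate from its definition via bijections). Note first that each $F_i$ is a relevant collection with $\alpha(F_i)=\alpha$: it is an hke collection by hypothesis, hence relevant by Proposition \ref{1}.

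Fix $i \in \{1,2\}$ and apply the implication $(1)\Rightarrow(3)$ of Theorem \ref{another characterization of a maximal KE collection} to $F_i$. This tells us that $|\bigcup F_i| = 2\alpha$, that the relation $\approx_{F_i}$ is an equivalence relation on $\bigcup F_i$ with exactly $\alpha$ classes each of cardinality $2$, and that $F_i$ equals the family of those $B \subseteq \bigcup F_i$ with $|B| = \alpha$ that meet every $\approx_{F_i}$-class. Since the number of classes equals $\alpha = |B|$ and each class has size $2$, a subset $B$ meeting every class must meet it in exactly one element, i.e. the members of $F_i$ are precisely the transversals of the $\approx_{F_i}$-classes. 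Now enumerate the $\approx_{F_i}$-classes as $\{x^i_1,y^i_1\},\dots,\{x^i_\alpha,y^i_\alpha\}$ and define $g_i : \bigcup F_i \to [2\alpha]$ by $g_i(x^i_j)=j$ and $g_i(y^i_j)=j+\alpha$. Because the classes partition $\bigcup F_i$, the map $g_i$ is a bijection, and it carries the class $\{x^i_j,y^i_j\}$ onto $\{j,j+\alpha\}$; hence $g_i$ maps the transversals of the $\approx_{F_i}$-classes bijectively onto the transversals of the pairs $\{j,j+\alpha\}$ for $j\in[\alpha]$, which is exactly the typical collection for $\alpha$. Thus $g_i$ witnesses that $F_i$ is isomorphic to the typical collection for $\alpha$.

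Applying this to both $i=1$ and $i=2$ gives $F_1 \cong (\text{typical collection for }\alpha) \cong F_2$, and transitivity of $\cong$ yields $F_1 \cong F_2$. I do not expect any genuine obstacle here: the only step needing a moment's thought is the elementary upgrade from ``$B$ meets every $\approx_{F_i}$-class'' to ``$B$ meets every class in exactly one point,'' which is forced by the count $\alpha = |B|$ of size-$2$ classes; once that is noted, matching the two pair-partitions by an arbitrary bijection and transporting subsets along it is pure bookkeeping, and the heavy lifting has already been done in Theorem \ref{another characterization of a maximal KE collection}.
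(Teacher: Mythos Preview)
Your proof is correct. Both your argument and the paper's rest entirely on Theorem \ref{another characterization of a maximal KE collection}: once one knows that a maximal hke collection is exactly the family of transversals of a pairing on a $2\alpha$-element set, uniqueness up to isomorphism is immediate. The only difference is organizational. The paper fixes $A_1\in F_1$, $A_2\in F_2$, picks any bijection $A_1\to A_2$, and extends it to $\bigcup F_1\to\bigcup F_2$ by sending duals to duals, then checks directly that this carries $F_1$ onto $F_2$. You instead show each $F_i$ is isomorphic to the typical collection for $\alpha$ and conclude by transitivity of $\cong$. Your route has the small bonus that it proves the nontrivial direction of Corollary \ref{the typical example is the unique one} along the way (the paper derives that corollary \emph{from} the present theorem), but the mathematical content is the same.
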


\begin{proof}
Define $\alpha=\alpha(F_1)=\alpha(F_2)$. Fix $A_1 \in F_1$ and $A_2 \in F_2$. So $|A_1|=|A_2|=\alpha$. Let $f:A_1 \to A_2$ be a bijection. Extend $f$ to a function $g:\bigcup F_1 \to \bigcup F_2$ such that if $y$ is the dual of $x$ in $F_1$ then $g(y)$ is the dual of $g(x)$ in $F_2$ (it holds vacuously for $x,y \in A_1$, because in this case, $y$ is not the dual of $x$). So for every two elements $x,y \in \bigcup F_1$, $y$ is the dual of $x$ in $F_1$ if and only if $g(y)$ is the dual of $g(x)$ in $F_2$. By Claim \ref{approx}, $g$ is bijective.

By the symmetry between $F_1$ and $F_2$, in order to prove that $g$ is an isomorphism, it is enough to show that $g[S]$ belongs to $F_2$ for each $S \in F_1$. By Theorem \ref{another characterization of a maximal KE collection}((1) $\rightarrow$ (3)), for every subset $B$ of $\bigcup F_2$ the following holds: $B \in F_2$ if and only if $\alpha \leq |B|$ and each two different elements in $B$ are not $\approx_{F_2}$-equivalent. 

Since $g$ is injective, $\alpha=|S| \leq g[S]$. Let $g(x),g(y)$ be two different elements in $g[S]$. If $g(x) \approx_{F_2} g(y)$ then $g(y)$ is the dual of $g(x)$ and so $y$ is the dual of $x$. But $y$ cannot be the dual of $x$, because they are both in $S$.
\end{proof}

\begin{corollary}\label{the typical example is the unique one}
Let $F$ be a relavant collection with $\alpha(F)=\alpha$. Then $F$ is a maximal hke collection if and only if $F$ is isomorphic to the typical collection for $\alpha$. 
\end{corollary}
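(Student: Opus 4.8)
The plan is to deduce this corollary directly from the two main results of the preceding section, Theorem~\ref{uniqueness of maximal} (uniqueness up to isomorphism) and Proposition~\ref{the typical example of a maximal KE collection} together with Proposition~\ref{if |F|=2^alpha then F is maximal} (the typical collection is a maximal hke collection). No new combinatorial work is needed; the content is entirely an assembly of facts already proved.

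First I would establish the easy direction: if $F$ is isomorphic to the typical collection for $\alpha$, then $F$ is a maximal hke collection. By Proposition~\ref{the typical example of a maximal KE collection}, the typical collection $T$ for $\alpha$ is an hke collection with $|T|=2^\alpha$, hence by Proposition~\ref{if |F|=2^alpha then F is maximal} it is a maximal hke collection. It then suffices to observe that the property of being a maximal hke collection is invariant under isomorphism of collections: an isomorphism $g:\bigcup T\to\bigcup F$ preserves all unions, intersections and cardinalities of subcollections (since $g$ is a bijection carrying $T$ onto $F$ setwise, and $g$ restricted to any member is a bijection onto a member of $F$), so $F$ is an hke collection with $\alpha(F)=\alpha(T)=\alpha$ and $|F|=|T|=2^\alpha$; apply Proposition~\ref{if |F|=2^alpha then F is maximal} again. (One should check the definition of isomorphism really does transport the hke equalities, but this is the routine verification mentioned above.)

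For the converse, suppose $F$ is a maximal hke collection with $\alpha(F)=\alpha$. Let $T$ be the typical collection for $\alpha$, which as just noted is also a maximal hke collection with $\alpha(T)=\alpha$. Then $\alpha(F)=\alpha(T)$, so Theorem~\ref{uniqueness of maximal} applies and yields $F\cong T$, i.e. $F$ is isomorphic to the typical collection for $\alpha$. This completes both directions.

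There is essentially no obstacle here: the only point requiring care is the isomorphism-invariance of "maximal hke collection" used in the first direction, and even that reduces to unwinding the definition of isomorphism of collections and the characterization $|F|=2^\alpha$ from Theorem~\ref{maximal iff has 2^alpha elements} / Proposition~\ref{if |F|=2^alpha then F is maximal}. The corollary is really just a restatement of Theorem~\ref{uniqueness of maximal} in the language of the named typical collection, so I would keep the write-up to a few lines.
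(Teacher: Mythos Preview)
Your proposal is correct and follows essentially the same route as the paper: the paper also reduces to showing that the typical collection is a maximal hke collection (via Proposition~\ref{the typical example of a maximal KE collection} and the $|F|=2^\alpha$ criterion), and then invokes Theorem~\ref{uniqueness of maximal} for the converse. Your write-up is slightly more explicit than the paper's about the isomorphism-invariance step in the easy direction, which the paper leaves tacit.
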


\begin{proof}
By Theorem \ref{uniqueness of maximal}, we have only to prove that if $F$ is the typical collection for $\alpha$ then $F$ is a maximal hke collection. Assume that $F$ is the typical collection for $\alpha$. By Proposition \ref{the typical example of a maximal KE collection}, $F$ is an hke collection and $|F|=2^\alpha$. So by Theorem \ref{maximal iff has 2^alpha elements}, $F$ is a maximal hke collection.
\end{proof}

The following corollary will be used in the proof of Theorem \ref{bipartite}.
\begin{corollary}\label{maximal independent is realy}
Let $F$ be a maximal hke collection. Let $x$ and $y$ be two different elements in $\bigcup F$. Then the following conditions are equivalent:
\begin{enumerate}
\item $x \approx y$, \item $\{x,y\} \cap S \neq \emptyset$ for each $S \in F$ and \item $\{x,y\} \nsubseteq S$ for each $S \in F$.
\end{enumerate}
\end{corollary}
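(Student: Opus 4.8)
The plan is to prove the two implications $(1) \Rightarrow (2)$ and $(1) \Rightarrow (3)$ directly from Claim \ref{for some implies for all}, and then to close the loop by proving the contrapositives $\neg(1) \Rightarrow \neg(2)$ and $\neg(1) \Rightarrow \neg(3)$ using the explicit description of $F$ supplied by Theorem \ref{another characterization of a maximal KE collection}.

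For $(1) \Rightarrow (2)$ and $(1) \Rightarrow (3)$: assume $x \approx y$ with $x \neq y$. By Claim \ref{for some implies for all}, for every $S \in F$ we have $x \in S \leftrightarrow y \notin S$, so exactly one of $x,y$ lies in $S$. In particular $\{x,y\} \cap S \neq \emptyset$, giving (2), and $\{x,y\} \nsubseteq S$, giving (3).

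For the converses, suppose $x \not\approx y$. Since $x \neq y$ and, by Claim \ref{approx}, each $\approx$-class has exactly two elements, the classes of $x$ and $y$ are distinct, hence disjoint; writing $[x] = \{x,x'\}$ and $[y] = \{y,y'\}$, the four elements $x, x', y, y'$ are pairwise distinct. By Theorem \ref{another characterization of a maximal KE collection}, $F$ is exactly the collection of subsets $B$ of $\bigcup F$ with $|B| = \alpha$ that meet every $\approx$-class. Choosing one representative from each of the $\alpha$ classes, taking $x'$ from $[x]$ and $y'$ from $[y]$, yields a set $S_1 \in F$ with $x \notin S_1$ and $y \notin S_1$, so $\{x,y\} \cap S_1 = \emptyset$, contradicting (2). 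Similarly, choosing $x$ from $[x]$, $y$ from $[y]$, and one representative from each other class yields a set $S_2 \in F$ with $\{x,y\} \subseteq S_2$, contradicting (3). This establishes $(2) \Rightarrow (1)$ and $(3) \Rightarrow (1)$, completing the cycle.

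The only real content is in the last paragraph, and even there the work is light: everything reduces to the combinatorial normal form for maximal hke collections given by Theorem \ref{another characterization of a maximal KE collection}. I expect no serious obstacle; the one point requiring care is that $x, x', y, y'$ are genuinely four distinct elements, so that the described choices of representatives are consistent — but this is immediate from the disjointness of distinct $\approx$-classes.
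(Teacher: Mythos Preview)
Your proof is correct and rests on the same structural fact as the paper's, namely the description of a maximal hke collection given by Theorem~\ref{another characterization of a maximal KE collection}. The paper packages this more tersely: instead of arguing with the abstract $\approx$-classes, it invokes Corollary~\ref{the typical example is the unique one} to assume without loss of generality that $F$ is the typical collection for $\alpha$, and then simply notes that each of (1), (2), (3) is equivalent to $|x-y|=\alpha$. Your argument unpacks exactly this content---using Claim~\ref{for some implies for all} for the forward implications and then constructing explicit witnesses $S_1,S_2$ by choosing representatives from each $\approx$-class---so the two proofs are essentially the same, with the paper's concrete normal form replacing your abstract one.
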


\begin{proof}
By Corollary \ref{the typical example is the unique one}, without loss of generality, $F$ is the typical collection for $\alpha$ (in particular, $x$ and $y$ are numbers in $[2\alpha]$). So each condition ((1), (2) and (3)) is equivalent to the condition $|x-y|=\alpha$.
\end{proof}


\section{From a Collection to a Graph}
 
\begin{definition}
Let $F$ be a relevant collection. The \emph{graph of $F$}, $G(F)$, is the graph $(V(G),E(G))$, where $V(G)=:\bigcup F$ and $E(G)=:\{vu:v,u \in G$ and there is no $A \in F$ such that $\{u,v\} \subseteq A$\}.
\end{definition}

$G(F)$ has the maximal set of edges such that each set in $F$ is independent.

\begin{proposition}\label{alpha(F) leq alpha(G)}
Let $F$ be a relevant collection. Then $\alpha(F) \leq \alpha(G(F))$.
\end{proposition}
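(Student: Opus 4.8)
The plan is to unwind the definitions. Let $F$ be a relevant collection with $\alpha(F)=\alpha$, and set $G=G(F)$ with $V(G)=\bigcup F$. I want to exhibit an independent set of $G$ of size $\alpha$, since $\alpha(G)$ is by definition the size of a \emph{maximum} independent set and hence is at least the size of any independent set I can produce.

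The obvious candidate is any member $A\in F$ (such a member exists since $F$ is a relevant collection, so $F\neq\emptyset$, and moreover every member has exactly $\alpha$ elements by the definition of a relevant collection). First I would note $A\subseteq \bigcup F=V(G)$, so $A$ is a set of vertices of $G$, and $|A|=\alpha$. Then I would check that $A$ is independent in $G$: suppose $u,v\in A$ are distinct; by the definition of $E(G)$, the pair $uv$ is an edge precisely when there is \emph{no} $B\in F$ with $\{u,v\}\subseteq B$. But $A$ itself is such a $B$, since $\{u,v\}\subseteq A\in F$. Hence $uv\notin E(G)$, and so no two vertices of $A$ are adjacent, i.e. $A$ is an independent set. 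Therefore $\alpha(G(F))\geq |A|=\alpha=\alpha(F)$, which is exactly the claim.

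There is really no obstacle here; the only point requiring the slightest care is that $F$ is nonempty (guaranteed because $\alpha(F)$ is a positive integer, so a relevant collection contains at least one set of that size) and that the members of $F$ are genuinely subsets of the vertex set $\bigcup F$ — both immediate from the definitions. So the proof is a two-line verification: pick $A\in F$, observe $|A|=\alpha(F)$, and observe $A$ is independent in $G(F)$ directly from the edge definition.
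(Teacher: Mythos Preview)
Your proof is correct and is exactly the paper's approach: the paper simply observes that every set in $F$ is independent in $G(F)$ and concludes $\alpha(F)\leq\alpha(G(F))$. You have just spelled out that one-line observation in more detail.
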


\begin{proof}
Every set in $F$ is independent in $G(F)$. Therefore $\alpha(F) \leq \alpha(G(F))$.
\end{proof}

It is easy to find a relevant collection, $F$, such that $\alpha(F)<\alpha(G(F))$.
\begin{example}
$$F:=\{\{1,2\},\{1,3\},\{2,3\}\}$$
Here, $\alpha(F)=2$, while $\alpha(G(F))=3$.
\end{example}

\begin{example}
$$F:=\{\{0,1,2,3,4,8\},\{0,3,4,5,6,7\},\{0,1,2,5,6,9\}\}$$
Here, $\alpha(F)=6$, while $\alpha(G(F))=7$.
\end{example} 

Recall:
\begin{definition}
\emph{A well-covered graph} is a graph in which every independent set can be extended to a maximum independent set.
\end{definition}

\begin{proposition}
For every well-covered graph $G$ with $V(G)=corona(G)$, the graph of $\Omega(G)$ is $G$.
\end{proposition}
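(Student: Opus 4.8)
The plan is to show that $G(\Omega(G)) = G$ by proving the two graphs have the same vertex set and the same edge set. For the vertex set, since $G$ is well-covered with $V(G) = \mathrm{corona}(G)$, every vertex $v$ lies in some maximum independent set (this is exactly what $V(G) = \mathrm{corona}(G)$ encodes, $\mathrm{corona}(G)$ being the union of all maximum independent sets); hence $\bigcup \Omega(G) = V(G)$, which is the vertex set of $G(\Omega(G))$ by definition.

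For the edge set, I would argue both inclusions. First, if $uv \in E(G)$, then no independent set of $G$ contains both $u$ and $v$, so in particular no $A \in \Omega(G)$ has $\{u,v\} \subseteq A$; by the definition of $G(F)$ this means $uv \in E(G(\Omega(G)))$. Conversely, suppose $uv \notin E(G)$. Then $\{u,v\}$ is an independent set of $G$, and since $G$ is well-covered, $\{u,v\}$ extends to some maximum independent set $A \in \Omega(G)$ with $\{u,v\} \subseteq A$; by the definition of $G(F)$ this witnesses that $uv \notin E(G(\Omega(G)))$. Combining, $E(G) = E(G(\Omega(G)))$.

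The main (and really the only) subtlety is pinning down the definition of $\mathrm{corona}(G)$ and confirming that $V(G) = \mathrm{corona}(G)$ is precisely the hypothesis needed to guarantee $\bigcup \Omega(G) = V(G)$; once that is granted, the edge-set argument is a direct application of the definition of $G(F)$ together with the well-covered hypothesis (which supplies the crucial step of extending a two-element independent set to a maximum one). No deep structural result is needed — the proof is essentially unwinding definitions — so I expect no real obstacle beyond being careful that every vertex genuinely appears in $\bigcup \Omega(G)$, which is why the hypothesis $V(G) = \mathrm{corona}(G)$ cannot be dropped.
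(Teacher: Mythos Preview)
Your proposal is correct and follows essentially the same argument as the paper: first identify $V(G(\Omega(G)))=\bigcup\Omega(G)=\mathrm{corona}(G)=V(G)$, then verify the edge sets agree by the two inclusions you give, using the well-covered hypothesis precisely to extend a non-adjacent pair $\{u,v\}$ to a maximum independent set. The paper's proof is virtually identical in structure and detail.
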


\begin{proof}
Let $G$ be a well-covered graph with $V(G)=corona(G)$ and let $G'$ be the graph of $\Omega(G)$. We should prove that $G=G'$. They have the same set of vertices:
$$V(G')=\bigcup \Omega(G)=corona(G)=V(G).$$

They have the same set of edges as well:
First assume that $(uv) \in E(G)$. $\{u,v\} \nsubseteq X$, for every $X \in \Omega(G)$. So $(uv) \in E(G')$. Conversely, assume that $(uv) \notin E(G)$. So $\{u,v\}$ is an independent set in $G$. Since $G$ is well-covered, we can find a set $X \in \Omega(G)$ such that $\{u,v\} \subseteq X$. Therefore $(uv) \notin E(G')$.
\end{proof}

The next example exemplifies the following facts:
\begin{enumerate}
\item there is an hke collection $F$ such that $\Omega(G) \neq F$ for every graph $G$ and \item we can find two different hke collections, $F_1$ and $F_2$ such that $G(F_1)=G(F_2)$.
\end{enumerate}

\begin{example} 
$$F:=\{\{1,3,5\},\{1,4,6\},\{2,3,5\},\{2,4,5\},\{2,4,6\}\}.$$ $F$ is an hke collection and $G(F)$ is the bipartite graph $G=(V,E)$ with $V=\{1,2,3,4,5,6\}$ and $E=\{12,34,36,56\}$. But $$\Omega(G)=\{\{1,3,5\},\{1,4,6\},\{2,3,5\},\{2,4,5\},\{2,4,6\},\{1,4,5\}\}=F \cup \{1,4,5\} \neq F.$$

Actually, there is no graph $G$ with $\Omega(G)=F$. For let $G$ be a graph with $F \subseteq \Omega(G)$. So $E(G) \cap \{45,14,15\}=\emptyset$. But in this case, $\{1,4,5\} \in \Omega(G)-F$.

The graph of $\Omega(G)$ is $G$ too. So $F_1=F$ and $F_2=\Omega(G)$ examplify Fact (2).
\end{example}

\begin{example}
$$F=\{\{1,2\},\{1,3\},\{2,3\},\{1,4\}\}.$$
$\alpha(F)=2$, $\bigcup F=\{1,2,3,4\}$ and $\bigcap F=\emptyset$. But $F$ is not an hke collection, because its subcollection, $\Gamma=\{\{1,2\},\{1,3\},\{2,3\}\}$, is not a KE collection. Let $G$ be the graph of $F$. $\alpha(G)=3$ (because $\{1,2,3\} \in \Omega(G)$) and $G$ is not a KE graph. 
\end{example}


\begin{problem}
Let $G_1=(V_1,E_1)$ and $G_2=(V_2,E_2)$ be two KE graphs. We say that $G_1$ and $G_2$ are \emph{equivalent} when $\Omega(G_1) \cong \Omega(G_2)$. Find connections between equivalent graphs.
\end{problem}

\begin{definition}\label{the definition of the typical KE graph}
\emph{The typical KE graph for $\alpha$} is the following bipartite graph:
$V(G)=[2\alpha]$ and $E(G)=\{i,i+\alpha:i \in [\alpha]\}$.  
\end{definition}

\begin{theorem}\label{bipartite}
Let $G$ be a graph such that $V(G)=corona(G)$. Then $G$ is isomorphic to the typical KE graph for $\alpha(G)$ if and only if $\Omega(G)$ is a maximal hke collection.
\end{theorem}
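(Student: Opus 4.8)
The plan is to prove both directions using the characterization machinery already developed, in particular Theorem~\ref{another characterization of a maximal KE collection} and Corollary~\ref{maximal independent is realy}, together with the hypothesis $V(G)=\operatorname{corona}(G)$, which guarantees $\bigcup\Omega(G)=V(G)$ (every vertex lies in some maximum independent set). Throughout write $\alpha=\alpha(G)$ and note $\Omega(G)$ is a relevant collection with $\alpha(\Omega(G))=\alpha$.

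First I would do the forward direction. Suppose $G$ is isomorphic to the typical KE graph for $\alpha$; by transporting along the isomorphism we may assume $V(G)=[2\alpha]$ and $E(G)=\{\{i,i+\alpha\}:i\in[\alpha]\}$. Then a subset $S\subseteq[2\alpha]$ is independent iff it contains at most one element of each pair $\{i,i+\alpha\}$, hence a maximum independent set is exactly a set of size $\alpha$ meeting every pair $\{i,i+\alpha\}$ in exactly one point. Thus $\Omega(G)$ is precisely the typical collection for $\alpha$ in the sense of Definition~\ref{definition the typical collection}, which by Corollary~\ref{the typical example is the unique one} (or directly by Propositions~\ref{the typical example of a maximal KE collection} and \ref{if |F|=2^alpha then F is maximal}) is a maximal hke collection.

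For the converse, suppose $\Omega(G)$ is a maximal hke collection. Since $V(G)=\operatorname{corona}(G)$ we have $\bigcup\Omega(G)=V(G)$, so $\Omega(G)$ is a maximal hke collection with $\bigcup\Omega(G)=V(G)$ and $\alpha(\Omega(G))=\alpha$. By Theorem~\ref{another characterization of a maximal KE collection}((1)$\Rightarrow$(3)) applied to $F=\Omega(G)$, we get $|V(G)|=2\alpha$, the relation $\approx$ has exactly $\alpha$ two-element equivalence classes, and $\Omega(G)$ is exactly the family of $\alpha$-subsets of $V(G)$ meeting every $\approx$-class. It remains to identify $E(G)$. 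I claim $\{u,v\}\in E(G)$ iff $u\approx v$. Indeed, $\{u,v\}\in E(G)$ means $\{u,v\}\nsubseteq S$ for every $S\in\Omega(G)$ (using $V(G)=\operatorname{corona}(G)$ so that every edge is ``witnessed'' by the non-existence of a maximum independent set containing both endpoints — more precisely: if $\{u,v\}$ were a non-edge it would be an independent set, extendable via well-coveredness... but $G$ need not be well-covered, so instead argue directly: $\{u,v\}\in E(G)$ clearly forces $\{u,v\}\nsubseteq S$ for all $S\in\Omega(G)$; conversely if $\{u,v\}\notin E(G)$ then $\{u,v\}$ is independent, and since $\Omega(G)$ is the typical collection up to the relabeling given by $\approx$, $u$ and $v$ lie in distinct $\approx$-classes, so the $\alpha$-set picking $u$, $v$ and one point from each remaining class lies in $\Omega(G)$ and contains $\{u,v\}$). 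Hence $\{u,v\}\in E(G)\iff \{u,v\}\nsubseteq S\ \forall S\in\Omega(G)\iff u\approx v$ by Corollary~\ref{maximal independent is realy}. Since $\approx$ consists of $\alpha$ disjoint pairs on the $2\alpha$-element set $V(G)$, the map sending the $i$-th pair to $\{i,i+\alpha\}$ is a graph isomorphism from $G$ onto the typical KE graph for $\alpha$.

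The main obstacle is the converse direction's identification of the edge set: one must be careful that $G$ is not assumed well-covered, so ``$\{u,v\}$ independent'' does not immediately give a maximum independent set containing both; the fix is to use the explicit description of $\Omega(G)$ from Theorem~\ref{another characterization of a maximal KE collection} (equivalently Corollary~\ref{the typical example is the unique one}) to exhibit such a maximum independent set by hand, together with Corollary~\ref{maximal independent is realy} to convert the set-theoretic condition ``$\{u,v\}\nsubseteq S$ for all $S$'' into ``$u\approx v$''. Everything else is bookkeeping with the isomorphism.
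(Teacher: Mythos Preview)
Your forward direction is correct and matches the paper. The converse direction has a genuine gap at the point where you write ``since $\Omega(G)$ is the typical collection up to the relabeling given by~$\approx$, $u$ and $v$ lie in distinct $\approx$-classes''. This step is circular: the explicit description of $\Omega(G)$ tells you only which subsets of $V(G)$ are maximum independent sets; it says nothing directly about which pairs are edges of $G$. To construct $S\in\Omega(G)$ with $\{u,v\}\subseteq S$ via the recipe ``pick $u$, $v$, and one point from each remaining class'', you must already know $u\not\approx v$. But by Corollary~\ref{maximal independent is realy}, $u\not\approx v$ is equivalent to the existence of such an $S$ --- precisely what you are trying to establish. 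Your own concluding paragraph flags the difficulty but the proposed ``fix'' is the same circular construction.

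The paper closes the gap by a short contradiction argument that does not attempt to build $S$ directly. One first records the easy implication: if $uv\in E(G)$ then $\{u,v\}\nsubseteq S$ for every $S\in\Omega(G)$, hence $u\approx v$ by Corollary~\ref{maximal independent is realy}. Now suppose $uv\notin E(G)$ but $u\approx v$. By uniqueness of the dual (Claim~\ref{approx is transitive and more}), no $z\in V(G)\setminus\{u,v\}$ satisfies $z\approx v$; by the implication just recorded, $zv\notin E(G)$ for every such $z$. Together with $uv\notin E(G)$ this makes $v$ an isolated vertex. Pick $S\in\Omega(G)$ with $u\in S$ (possible since $V(G)=\operatorname{corona}(G)$); then $v\notin S$ by Corollary~\ref{maximal independent is realy}, and $S\cup\{v\}$ is independent of size $\alpha+1$, contradicting $\alpha(G)=\alpha$. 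This is the missing ingredient your proposal needs.
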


\begin{proof}
Assume that $G$ is isomorphic to the typical KE graph for $\alpha$. Without loss of generality $V(G)=[2\alpha]$ and $E(G)=\{i,i+\alpha:i \in [\alpha]\}$, for some $\alpha$. So a maximal independent set in $G$ has $\alpha$ elements. Clearly, $\Omega(G)$ is the typical collection for $\alpha$. Hence, by Corollary \ref{the typical example is the unique one}, $\Omega(G)$ is a maximal hke collection.

Conversely, assume that $\Omega(G)$ is a maximal hke collection. By Theorem \ref{the typical example is the unique one}, without loss of generality, $\Omega(G)$ is the typical collection for $\alpha$. So $corona(G)=\bigcup \Omega(G)=[2\alpha]$. Therefore $V(G)=corona(G)=[2\alpha]$. 

By Definition \ref{the definition of the typical KE graph}, it remains to prove that $xy \in E(G)$ if and only if $|x-y|=\alpha$, or equivalently, $x$ is the $\Omega(G)$-dual of $y$.

\begin{claim}\label{xy in E implies duality}
Let $x$ and $y$ be two different vertices. If $xy \in E(G)$ then $x$ is the $\Omega(G)$-dual of $y$.
\end{claim}

\begin{proof}
For every $S \in \Omega(G)$, $S$ is independent and so the set $\{x,y\}$ is not included in $S$. Hence, by Corollary \ref{maximal independent is realy}, $x$ is the $\Omega(G)$-dual of $y$.
\end{proof}

\begin{claim}
Let $x$ and $y$ be two different vertices. Then $xy \in E(G)$ if and only if $x$ is the $\Omega(G)$-dual of $y$.
\end{claim} 

\begin{proof}
The first direction holds by Claim \ref{xy in E implies duality}. In order to prove the second direction, we assume that $xy \notin E(G)$. For the sake of a contradiction, assume that $x$ is the $\Omega(G)$-dual of $y$. So by the uniqueness of the $\Omega(G)$-dual of $y$, for every $z \in V(G)-\{x,y\}$, $z$ is not the $\Omega(G)$-dual of $y$. By Claim \ref{xy in E implies duality}, $zy \notin E(G)$. But by assumption, $xy \notin E(G)$. So for every $z \in V(G)-\{y\}$, $zy \notin E(G)$. 

Take $S \in \Omega(G)$ such that $x \in S$. Since $x \approx y$, by Corollary \ref{maximal independent is realy}, $y \notin S$. Hence, $S \cup \{y\}$ is an independent set of cardinality $\alpha+1$, a contradiction.
\end{proof}
Theorem \ref{bipartite} is proved.
\end{proof}

\section{The Subcollections of $\Omega(G)$ where $G$ is KE} 
We now can solve a problem of Jarden, Levit and Mandrescu.

The definition of a KE collection in \cite{dam}, is different from the definition in the current paper.
\begin{definition}\label{the old definition of a KE collection}
A relevant collection, $F$, is said to be \emph{a KE collection in the old sense} if $F \subseteq \Omega(G)$, for some graph $G$ and $|\bigcup F|+|\bigcap F|=2\alpha(G)$. 
\end{definition}

We restate \cite[Problem 3.1]{dam}:
\begin{problem}
Characterize the KE collections in the old sense.  
\end{problem}

\begin{theorem}\label{old KE iff hke}
Let $F$ be a collection of sets. The following conditions are equivalent:
\begin{enumerate}
 \item $F$ is an hke collection, \item $F$ is isomorphic to a subcollection of the typical collection for $\alpha(F)$, \item $F$ is included in $\Omega(G)$ for some KE graph $G$ and \item $F$ is a KE collection in the old sense. 
\end{enumerate}
\end{theorem}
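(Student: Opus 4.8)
The plan is to prove the cycle of implications $(2)\Rightarrow(1)\Rightarrow(3)\Rightarrow(4)\Rightarrow(2)$, leaning on the structural results already established, especially Theorem~\ref{another characterization of a maximal KE collection} and Corollary~\ref{the typical example is the unique one}, together with Theorem~\ref{the main theorem of dam}. The implications $(3)\Rightarrow(4)$ and $(4)\Rightarrow(1)$ should be nearly immediate: if $F\subseteq\Omega(G)$ for a KE graph $G$, then $|\bigcup F|+|\bigcap F|=2\alpha(G)$ holds automatically since $F$ is a relevant collection of $\alpha(G)$-sets and $G$ is KE (this is essentially the content of Proposition~\ref{the first on 2016}, restricted to a subcollection), so $F$ is a KE collection in the old sense; and a KE collection in the old sense is a subcollection of $\Omega(G)$ for a graph $G$ with $|\bigcup F|+|\bigcap F|=2\alpha(G)$, and by Proposition~\ref{the first on 2016} every subcollection of $\Omega(G)$ of a KE graph is hke — one checks the ``old sense'' hypothesis forces enough KE-ness. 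Actually the cleanest route for $(4)\Rightarrow(1)$ is to observe that $|\bigcup \Gamma|+|\bigcap\Gamma|=2\alpha$ for every subcollection $\Gamma\subseteq F\subseteq\Omega(G)$ because $\Omega(G)$ itself is hke when $G$ is KE, and the old-sense hypothesis $|\bigcup F|+|\bigcap F|=2\alpha(G)$ is exactly what is needed to deduce $G$ is KE via Theorem~\ref{the main theorem of dam} — here I would need the matching $M$, which the old-sense definition does not literally hand us, so this is the delicate point (see below).

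For $(1)\Rightarrow(3)$: given an hke collection $F$ with parameter $\alpha$, by Corollary~\ref{the typical example is the unique one} (applied after extending $F$ to a maximal hke collection, which exists by the surjectivity machinery of Proposition~\ref{the function is a surjection} iterated, or directly by embedding into the typical collection) $F$ is isomorphic to a subcollection $F'$ of the typical collection $T_\alpha$ for $\alpha$. Now $T_\alpha = \Omega(G_0)$ where $G_0$ is the typical KE graph for $\alpha$ by Theorem~\ref{bipartite} (or by direct inspection: the maximal independent sets of the perfect matching graph on $[2\alpha]$ are exactly the transversals). Then $F'\subseteq\Omega(G_0)$ and $G_0$ is bipartite hence KE, so $(3)$ holds for $F'$; pulling back the isomorphism gives a graph $G\cong G_0$ restricted appropriately — more precisely one takes $G$ to be the image of $G_0$ under the isomorphism of collections, which is again a typical KE graph, and $F\subseteq\Omega(G)$. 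For $(2)\Rightarrow(1)$: a subcollection of the typical collection is hke because the typical collection is hke (Proposition~\ref{the typical example of a maximal KE collection}) and the hke property is clearly hereditary under taking subcollections — any subcollection $\Gamma$ of $F$ is also a subcollection of $T_\alpha$, so $|\bigcup\Gamma|+|\bigcap\Gamma|=2\alpha$; and isomorphism of collections preserves unions, intersections and cardinalities, hence preserves being hke.

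The main obstacle I anticipate is the passage to the ``old sense'' definition, i.e. getting the matching $M:V(G)-\bigcup F\to\bigcap F$ required by Theorem~\ref{the main theorem of dam} to conclude $G$ is a KE graph, or conversely verifying that the bare cardinality condition $|\bigcup F|+|\bigcap F|=2\alpha(G)$ in Definition~\ref{the old definition of a KE collection} is genuinely equivalent to what we need. The way around this is: for $(1)\Rightarrow(3)$ we never deal with an arbitrary $G$ — we exhibit the concrete witness $G_0$, the typical KE graph, which is bipartite and therefore KE with no matching argument needed, so that branch is clean. For $(4)\Rightarrow(2)$, suppose $F\subseteq\Omega(G)$ with $|\bigcup F|+|\bigcap F|=2\alpha(G)$; I would first argue $F$ is hke — this needs that $G$ restricted to $\bigcup F$ behaves like a KE graph, which follows because $\alpha$-sets in $\Omega(G)$ that jointly satisfy the global cardinality equality force the union–intersection equality on every subcollection (one can extract this from combining \cite[Theorem 3.6]{jlm} and \cite[Corollary 2.8]{jlm} as in Proposition~\ref{the first on 2016}, noting those results are about $\Omega$ of KE graphs, so strictly I should first show the hypothesis implies the relevant subgraph is KE) — and then feed $F$ into $(1)\Rightarrow(3)\Rightarrow$ the iso statement $(2)$. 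So the architecture is: establish $(2)\Leftrightarrow(1)$ directly and cheaply, establish $(1)\Rightarrow(3)$ via the typical graph, $(3)\Rightarrow(4)$ trivially, and $(4)\Rightarrow(1)$ using Theorem~\ref{the main theorem of dam} plus Proposition~\ref{the first on 2016}; the genuinely load-bearing lemma is the identification $T_\alpha=\Omega(G_0)$ and the heredity of the hke property, both of which are already in hand.
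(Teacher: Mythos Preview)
Your overall architecture is close to the paper's, which runs the cycle $(1)\Rightarrow(2)\Rightarrow(3)\Rightarrow(4)\Rightarrow(1)$. Your $(1)\Rightarrow(3)$ via extension to a maximal hke collection and identification with $\Omega$ of the typical KE graph is the paper's $(1)\Rightarrow(2)\Rightarrow(3)$ in compressed form, and $(2)\Rightarrow(1)$ is indeed immediate heredity plus isomorphism-invariance, as you say.

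The genuine gap is $(4)\Rightarrow(1)$. You correctly flag this as the delicate step, but your proposed route does not close. Definition~\ref{the old definition of a KE collection} does \emph{not} require $G$ to be KE, and $G$ need not be: take $G=K_{2,2,2}$ with parts $\{1,2\},\{3,4\},\{5,6\}$, so $\alpha(G)=2$, $\mu(G)=3$, and $G$ is not KE; yet $F=\{\{1,2\},\{3,4\}\}\subseteq\Omega(G)$ satisfies $|\bigcup F|+|\bigcap F|=4=2\alpha(G)$. Hence you cannot invoke Proposition~\ref{the first on 2016} on $G$. Your alternative via Theorem~\ref{the main theorem of dam} is circular (its hypothesis already demands an hke subcollection) and, as you yourself note, is missing the matching $M$. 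The specific results you reach for in \cite{jlm}, Theorem~3.6 and Corollary~2.8, are exactly the ones behind Proposition~\ref{the first on 2016} and presuppose $G$ KE, so they do not help. Your fallback that ``$G$ restricted to $\bigcup F$ behaves like a KE graph'' may well be true, but you offer no argument for it, and it is not something established elsewhere in the paper.

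The paper handles $(4)\Rightarrow(1)$ by citing \emph{different} results from \cite{jlm}, namely Corollaries~2.7 and~2.9 there, which apply to an \emph{arbitrary} graph $G$ and directly yield $|\bigcup\Gamma|+|\bigcap\Gamma|=2\alpha(G)$ for every nonempty $\Gamma\subseteq F$ once $|\bigcup F|+|\bigcap F|=2\alpha(G)$ is assumed. With those in hand the implication is one line, and no KE-ness of any graph needs to be verified.
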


\begin{proof}
We first prove that Clause (1) implies clause (2). 
Assume that $F$ is an hke collection. Let $F'$ be a maximal hke collection including $F$. Clearly, $\alpha(F')=\alpha(F)$. Let $G$ be the typical graph for $\alpha(F)$. By Theorem \ref{bipartite}, $\Omega(G)$ is a maximal hke. So by Theorem \ref{uniqueness of maximal}, $F'$ is isomorphic to $\Omega(G)$. Clause (2) is proved.

Easily, Clause (2) implies Clause (3).

If Clause (3) holds then $$|\bigcup F|+|\bigcap F| \leq |\bigcup \Omega(G)|+|\bigcap \Omega(G)|=2\alpha.$$
So Clause (4) holds.

Assume that Clause (4) holds, namely, $F$ is a KE collection in the old sense. So $F \subseteq \Omega(G)$ for some graph $G$ and $$|\bigcup F|+|\bigcap F|=2\alpha(G).$$ Let $\Gamma$ be a subcollection of $F$. By \cite[Corollary 2.7 and Corollary 2.9]{jlm}, $$|\bigcup \Gamma|+|\bigcap \Gamma|= 2\alpha(G).$$
\end{proof}


\section{Adding A Limitation on $|\bigcup F|$}

In this section, we prove that for every two positive integers $\alpha$ and $n$, the maximal cardinality of an hke collection, $F$, with $\alpha(F)=\alpha$ and $|\bigcup F|=n$ is $2^{n-\alpha}$. We conclude that for every positive integer $n$, the maximal cardinality of an hke collection, $F$, with $|\bigcup F|=n$ is $2^\alpha$ if $n=2\alpha$ or $n=2\alpha+1$.

Since every hke collection $F$ satisfies $\alpha(F) \leq |\bigcup F| \leq 2\alpha(F)$, the assumption $\alpha \leq n \leq 2\alpha$ in the following definition is needed.
\begin{definition}
Let $a(\alpha,n)$ be the maximal number of sets in an hke collection $F$ with $\alpha(F)=\alpha$ and $|\bigcup F|=n$, where $\alpha$ and $n$ are positive integers satisfying $\alpha \leq n \leq 2\alpha$.
\end{definition}

\begin{proposition}\label{a(alpha,n) leq 2^alpha}
For every two positive integers $\alpha$ and $n$ satsfying $\alpha \leq n \leq 2\alpha$, we have $a(\alpha,n) \leq 2^\alpha$. 
\end{proposition}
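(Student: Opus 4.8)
The statement to prove is Proposition \ref{a(alpha,n) leq 2^alpha}: for positive integers $\alpha \le n \le 2\alpha$, we have $a(\alpha,n) \le 2^\alpha$. The plan is to reduce this immediately to the injectivity of the map $f_A$ established in Proposition \ref{the function is an injection}. Indeed, let $F$ be any hke collection with $\alpha(F)=\alpha$ and $|\bigcup F|=n$ (the value of $n$ plays no role here). Since $F$ is nonempty (it must contain at least one set, as $\alpha \ge 1$ forces sets of size $\alpha$, and the empty collection has no well-defined $\alpha$), fix some $A \in F$; then $|A| = \alpha$ by Proposition \ref{1}. By Proposition \ref{the function is an injection}, the map $f_A : F \to P(A)$, $f_A(D) = A \cap D$, is an injection, hence $|F| \le |P(A)| = 2^{|A|} = 2^\alpha$.

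\textbf{Key steps, in order.} First I would observe that $a(\alpha,n)$ is well-defined and positive under the hypothesis $\alpha \le n \le 2\alpha$: the typical collection for $\alpha$ restricted appropriately, or more simply any single set of size $\alpha$, witnesses that the set of hke collections with the prescribed parameters is nonempty, so the maximum exists (all such collections have cardinality bounded by $2^\alpha$, as we are about to show, so we are taking a max over a nonempty finite-valued set). Second, I would take an arbitrary hke collection $F$ realizing the parameters $\alpha, n$ and pick $A \in F$. Third, I would invoke Proposition \ref{the function is an injection} verbatim to conclude $|F| \le 2^\alpha$. Fourth, since $F$ was arbitrary among the competitors, $a(\alpha,n) \le 2^\alpha$.

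\textbf{Main obstacle.} Honestly, there is no real obstacle: the proof is a one-line appeal to Proposition \ref{the function is an injection}, which has already done all the work. The only thing requiring a moment's care is the edge bookkeeping — confirming that the hypothesis $\alpha \le n \le 2\alpha$ guarantees the class of collections under consideration is nonempty (so that $a(\alpha,n)$ denotes an actual maximum rather than a supremum over the empty set), and noting that the bound is uniform in $n$. I would phrase the proof as: ``Let $F$ be an hke collection with $\alpha(F) = \alpha$ and $|\bigcup F| = n$, and fix $A \in F$. By Proposition \ref{the function is an injection}, $|F| \le 2^\alpha$. Hence $a(\alpha,n) \le 2^\alpha$.''
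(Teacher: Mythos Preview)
Your proposal is correct and essentially matches the paper's approach. The paper's proof is the single line ``By Theorem \ref{maximal iff has 2^alpha elements},'' whose content (via Proposition \ref{the function is an injection}) is exactly the injection $f_A:F\to P(A)$ you invoke; if anything, your citation of Proposition \ref{the function is an injection} is the more direct one, since that proposition already states $|F|\le 2^\alpha$ explicitly.
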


\begin{proof}
By Theorem \ref{maximal iff has 2^alpha elements}.
\end{proof}

\begin{proposition}\label{the new b(alpha)}
For every positive integer $\alpha$, we have $a(\alpha,2\alpha)=2^\alpha$.
\end{proposition}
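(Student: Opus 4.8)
The statement to prove is Proposition~\ref{the new b(alpha)}: for every positive integer $\alpha$, we have $a(\alpha,2\alpha)=2^\alpha$.

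The plan is as follows. By Proposition~\ref{a(alpha,n) leq 2^alpha} we already have $a(\alpha,2\alpha)\leq 2^\alpha$, so it suffices to exhibit a single hke collection $F$ with $\alpha(F)=\alpha$ and $|\bigcup F|=2\alpha$ achieving $|F|=2^\alpha$. The obvious candidate is the typical collection for $\alpha$ from Definition~\ref{definition the typical collection}. First I would invoke Proposition~\ref{the typical example of a maximal KE collection}, which tells us precisely that this collection $F$ is an hke collection and $|F|=2^\alpha$. Then I would observe that $\alpha(F)=\alpha$ (each member has exactly $\alpha$ elements, by construction or by Proposition~\ref{1}) and that $\bigcup F=[2\alpha]$, so $|\bigcup F|=2\alpha$; the latter holds because for each $i\in[\alpha]$, both $i$ and $i+\alpha$ occur in some member of $F$ (indeed $i$ lies in every $S$ with $i\in S$, and such $S$ exist). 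Hence $F$ witnesses $a(\alpha,2\alpha)\geq 2^\alpha$, and combined with the upper bound we get equality.

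There is essentially no obstacle here: the hard work has already been done in Proposition~\ref{the typical example of a maximal KE collection} and Proposition~\ref{a(alpha,n) leq 2^alpha}. The only thing to be slightly careful about is confirming $|\bigcup F|=2\alpha$ rather than something smaller, i.e.\ that every element of $[2\alpha]$ is actually used; this is immediate since for any $j\in[2\alpha]$ one can pick a subset $S\subseteq[2\alpha]$ with $j\in S$ satisfying the defining biconditional $i\in S\leftrightarrow i+\alpha\notin S$, so $j\in\bigcup F$.

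Concretely, the proof reads: By Proposition~\ref{a(alpha,n) leq 2^alpha}, $a(\alpha,2\alpha)\leq 2^\alpha$. For the reverse inequality, let $F$ be the typical collection for $\alpha$. By Proposition~\ref{the typical example of a maximal KE collection}, $F$ is an hke collection with $|F|=2^\alpha$, and by Proposition~\ref{1} each member of $F$ has $\alpha$ elements, so $\alpha(F)=\alpha$. Moreover $\bigcup F=[2\alpha]$: given $j\in[2\alpha]$, choosing $S$ with $j\in S$ and $i\in S\leftrightarrow i+\alpha\notin S$ for all $i\in[\alpha]$ gives $S\in F$ and $j\in S$, so $j\in\bigcup F$. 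Thus $|\bigcup F|=2\alpha$, and $F$ witnesses $a(\alpha,2\alpha)\geq 2^\alpha$. Therefore $a(\alpha,2\alpha)=2^\alpha$.
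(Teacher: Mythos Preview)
Your proof is correct and follows exactly the same approach as the paper's own proof: use Proposition~\ref{a(alpha,n) leq 2^alpha} for the upper bound and the typical collection for $\alpha$ (via Proposition~\ref{the typical example of a maximal KE collection}) for the lower bound. You simply spell out the verifications $\alpha(F)=\alpha$ and $\bigcup F=[2\alpha]$ more explicitly than the paper does.
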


\begin{proof}
By Proposition \ref{a(alpha,n) leq 2^alpha}, $a(\alpha,2\alpha) \leq 2^\alpha$.
The typical example for $\alpha$ exemplifies the converse. 
\end{proof}
 
In order to prove that $a(\alpha,n)=2^{n-\alpha}$, it remains to prove that $a(\alpha,n)=a(n-\alpha,2n-2\alpha)$. Propositions \ref{6.4} and \ref{6.5} are steps towards this goal.

\begin{proposition}\label{adding or subtracting}\label{6.4}
Let $\alpha,n$ and $d$ be positive integers with $\alpha \leq n \leq 2\alpha$.
\begin{enumerate}
\item $a(\alpha,n) \leq a(\alpha-d,n-d)$ whenever $a(\alpha-d,n-d)$ is defined (namely, $d \leq 2\alpha-n$) and \item $a(\alpha,n) \leq a(\alpha+d,n+d)$.
\end{enumerate}
\end{proposition}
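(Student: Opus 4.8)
The plan is to prove both inequalities by manipulating hke collections directly, using the characterization of hke collections via Equality \ref{equality 1} (Theorem \ref{equivalent formulations of a KE collection}) together with the structural results on maximal hke collections. For Clause (1), I would start with an hke collection $F$ with $\alpha(F)=\alpha$, $|\bigcup F|=n$, and $|F|=a(\alpha,n)$. The goal is to produce an hke collection $F'$ with $\alpha(F')=\alpha-d$, $|\bigcup F'|=n-d$, and $|F'|\geq |F|$. Since $d\leq 2\alpha-n$, we have $n-(\alpha-d)\leq \alpha-d$ still nonnegative, i.e. $\alpha-d\leq n-d\leq 2(\alpha-d)$, so the target is in range. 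The natural move is to delete $d$ elements from each set of $F$ simultaneously — but one must choose which $d$ elements carefully so that the cardinality drops by exactly $d$ and the hke property is preserved. The elements to delete should come from $\bigcup F - \bigcap F$: specifically, I would look for a set $T$ of $d$ elements such that $T$ is contained in every member of $F$ (i.e. $T\subseteq\bigcap F$) OR, more cleverly, a set $T$ of elements whose membership pattern across $F$ is ``rigid.'' Actually the cleanest approach: since $\bigcup F$ has $n<2\alpha$ elements and any hke collection extends to a maximal one (of the form of the typical collection for $\alpha$, by Theorem \ref{uniqueness of maximal} and Corollary \ref{the typical example is the unique one}), I would embed $F$ into the typical collection for $\alpha$ on $[2\alpha]$ with $\bigcup F$ corresponding to a subset of $[2\alpha]$. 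Because $|\bigcup F|=n<2\alpha$, there is some coordinate $i\in[\alpha]$ with at most one of $i, i+\alpha$ in $\bigcup F$; this lets one "collapse" that coordinate, reducing $\alpha$ by one and $|\bigcup F|$ by one, without losing any set of $F$. Iterating $d$ times gives Clause (1).

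For Clause (2), I want an hke collection $F'$ with $\alpha(F')=\alpha+d$, $|\bigcup F'|=n+d$, $|F'|\geq|F|$. Here the move is to \emph{add} $d$ fresh elements to every set: pick a set $C$ of $d$ brand-new elements disjoint from $\bigcup F$, and set $F'=\{A\cup C : A\in F\}$. Then $|A\cup C|=\alpha+d$ for each $A$, $\bigcup F'=\bigcup F\cup C$ has $n+d$ elements, and $|F'|=|F|$ since the map $A\mapsto A\cup C$ is injective. It remains only to check $F'$ is hke: for any partition $\{\Gamma_1,\Gamma_2\}$ of $F'$ corresponding to a partition $\{\Delta_1,\Delta_2\}$ of $F$, the set $C$ is contained in both $\bigcap\Gamma_1$ and $\bigcap\Gamma_2$ (and in both unions), so $\bigcap\Gamma_1-\bigcup\Gamma_2 = \bigcap\Delta_1-\bigcup\Delta_2$ and symmetrically, hence Equality \ref{equality 1} for $F'$ reduces verbatim to Equality \ref{equality 1} for $F$, which holds by Theorem \ref{equivalent formulations of a KE collection}. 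Thus $a(\alpha,n)\leq a(\alpha+d,n+d)$.

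I expect the main obstacle to be Clause (1): making the "coordinate collapse" precise. The delicate point is to verify that after embedding $F$ into the typical collection and deleting one element from the appropriate coordinate, the resulting collection is still \emph{relevant} (all sets have the same size $\alpha-1$) and still \emph{hke}. If $i\in[\alpha]$ is such that $i+\alpha\notin\bigcup F$, then every $A\in F$ must contain $i$ (since $A$ meets the equivalence class $\{i,i+\alpha\}$ but $i+\alpha\notin\bigcup F\supseteq A\cap\{i,i+\alpha\}$ forces $i\in A$), so $i\in\bigcap F$; deleting $i$ from every set drops each cardinality by exactly $1$ and drops $|\bigcup F|$ by exactly $1$. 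The hke property is then immediate because deleting a common element from every set of an hke collection preserves all the equalities in Definition \ref{definition of a KE collection} — both $\bigcup\Gamma$ and $\bigcap\Gamma$ lose exactly that one element, so each side of $|\bigcup\Gamma|+|\bigcap\Gamma|=2\alpha$ drops by $2$. The one thing to be careful about is the existence of such a coordinate $i$: since the two elements of each of the $\alpha$ equivalence classes partition $[2\alpha]$ and $\bigcup F$ corresponds to $n<2\alpha$ of them (when $d>0$ we have $n\leq 2\alpha-d<2\alpha$), by pigeonhole at least $2\alpha-n\geq d$ classes have a missing element, which is exactly enough to iterate $d$ times. This completes the proof of both clauses.
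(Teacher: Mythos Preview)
Your proof is correct and, at its core, uses the same construction as the paper: for Clause~(2) you add a fresh set $C$ of $d$ elements to every member of $F$, exactly as the paper does; for Clause~(1) you end up deleting $d$ elements of $\bigcap F$ from every member of $F$, which is again exactly the paper's move.

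The only difference is that for Clause~(1) you take an unnecessary detour through the embedding into the typical collection. The paper simply observes that $|\bigcap F|=2\alpha-n\geq d$, picks any $C\subseteq\bigcap F$ with $|C|=d$, and sets $F'=\{A\setminus C:A\in F\}$; the hke property is then immediate from the definition since both $|\bigcup\Gamma|$ and $|\bigcap\Gamma|$ drop by exactly $d$. Your pigeonhole argument on coordinates of the typical collection is just a roundabout way of rediscovering that these $d$ elements lie in $\bigcap F$ (as you yourself note), so the embedding machinery can be dropped entirely.
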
 
\begin{proof}
Let $F$ be an hke collection with $\alpha(F)=\alpha$ and $|\bigcup F|=n$. 

(1) $|\bigcap F|=2\alpha-n$. Let $C$ be a subset of $\bigcap F$ of cardinality $d$. Define $$F'=\{A-C:A \in F\}.$$  

Since $|\bigcup F'|=n-d$, $\alpha(F')=\alpha-d$ and $|F'|=|F|$, it remains to prove that $F'$ is an hke collection. 

Let $\Gamma'$ be a non-empty subcollection of $F'$. So $$\Gamma'=\{A-C:A \in \Gamma\},$$ for some non-empty subcollection  $\Gamma$ of $F$. We have to show that $\Gamma'$ is a KE collection, or equivalently, $$|\bigcup \Gamma'|+|\bigcap \Gamma'|=2(\alpha-d).$$ 

But
$$\bigcup \Gamma'=\bigcup \Gamma-C$$
and 
 $$\bigcap \Gamma'=\bigcap \Gamma-C.$$ So $$|\bigcup \Gamma'|+|\bigcap \Gamma'|=|\bigcup \Gamma|-d+|\bigcap \Gamma|-d=2\alpha-2d.$$

(2) Define $F'=\{A \cup C:A \in F\}$, where $C$ is a fixed set of cardinality $d$ with $C \cap \bigcup F=\emptyset$. Since $|\bigcup F'|=n+d$, $\alpha(F')=\alpha+d$ and $|F'|=|F|$, it remains to prove that $F'$ is an hke collection. 

Let $\Gamma'$ be a non-empty subcollection of $F'$. So $$\Gamma'=\{A \cup C:A \in \Gamma\},$$ for some non-empty subcollection  $\Gamma$ of $F$. We have to show that $\Gamma'$ is a KE collection, or equivalently, $$|\bigcup \Gamma'|+|\bigcap \Gamma'|=2(\alpha+d).$$ 

But
$$\bigcup \Gamma'=\bigcup \Gamma \cup C$$
and 
 $$\bigcap \Gamma'=\bigcap \Gamma \cup C.$$ So $$|\bigcup \Gamma'|+|\bigcap \Gamma'|=|\bigcup \Gamma|+d+|\bigcap \Gamma|+d=2\alpha+2d.$$

\end{proof}

\begin{proposition}\label{ignoring the intersection}\label{6.5}
Let $\alpha$ and $n$ be positive integers satisfying $\alpha \leq n \leq 2\alpha$.
If $-\infty<d \leq 2\alpha-n$, then $$a(\alpha,n)=a(\alpha-d,n-d).$$
\end{proposition}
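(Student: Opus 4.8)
The plan is to prove the two inequalities $a(\alpha,n) \leq a(\alpha-d,n-d)$ and $a(\alpha-d,n-d) \leq a(\alpha,n)$ separately, and then deduce the equality. For the first inequality, one can simply invoke Proposition \ref{6.4}(1): since $d \leq 2\alpha - n$, the quantity $a(\alpha-d,n-d)$ is defined, and that proposition already gives $a(\alpha,n) \leq a(\alpha-d,n-d)$. So the only new content is the reverse inequality $a(\alpha-d,n-d) \leq a(\alpha,n)$.

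For the reverse inequality, I would apply Proposition \ref{6.4}(2) in the form appropriate to the parameters $(\alpha', n') = (\alpha-d, n-d)$. Concretely, starting from an hke collection $F'$ with $\alpha(F') = \alpha - d$ and $|\bigcup F'| = n - d$, one adjoins a fixed set $C$ of $d$ new elements (disjoint from $\bigcup F'$) to every member of $F'$, obtaining $F = \{A \cup C : A \in F'\}$. By the argument in the proof of Proposition \ref{6.4}(2), $F$ is an hke collection with $\alpha(F) = (\alpha - d) + d = \alpha$, with $|\bigcup F| = (n-d) + d = n$, and $|F| = |F'|$. Hence $a(\alpha-d, n-d) \leq a(\alpha, n)$. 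The one point to check is that the hypotheses of Proposition \ref{6.4}(2) are met for the parameters $(\alpha-d, n-d, d)$: we need $\alpha - d \leq n - d \leq 2(\alpha-d)$, i.e. $\alpha \leq n$ (given) and $n - d \leq 2\alpha - 2d$, i.e. $d \leq 2\alpha - n$ (given). If $d$ is allowed to be negative or zero the statement is either trivial ($d = 0$) or reduces to part (1) with $-d$ in place of $d$, so I would briefly note that the case $d \leq 0$ follows by symmetry from part (1) of Proposition \ref{6.4}, or is vacuous.

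I do not expect a serious obstacle here: the proposition is essentially a bookkeeping consequence of the two monotonicity bounds in Proposition \ref{6.4}, combined with the observation that the two operations "delete $d$ elements of $\bigcap F$" and "adjoin $d$ fresh elements to every set" are mutually inverse on the relevant classes of collections, so the chain $a(\alpha,n) \leq a(\alpha-d,n-d) \leq a(\alpha,n)$ closes up. The only mild care needed is to confirm that both invocations of Proposition \ref{6.4} have their definedness side-conditions satisfied under the single hypothesis $d \leq 2\alpha - n$ (together with $\alpha \leq n \leq 2\alpha$), which is exactly what is stated. So the write-up is short: state the two inequalities, cite Proposition \ref{6.4}(1) and (2) with the parameter substitutions, verify the inequalities on the parameters, and conclude.
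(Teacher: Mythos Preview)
Your proposal is correct and follows essentially the same approach as the paper: both proofs obtain the equality by combining the two inequalities of Proposition~\ref{6.4} (part (1) for $a(\alpha,n)\le a(\alpha-d,n-d)$ and part (2), applied at $(\alpha-d,n-d)$, for the reverse) after checking that the definedness condition $\alpha-d\le n-d\le 2(\alpha-d)$ holds under the hypothesis $d\le 2\alpha-n$. The paper is simply terser---it says ``by Proposition~\ref{6.4} it suffices to show $a(\alpha-d,n-d)$ is defined'' and then verifies the parameter inequalities---whereas you spell out the two invocations and the $d\le 0$ case explicitly.
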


\begin{proof}
By Proposition \ref{6.4}, it is enough to show that $a(\alpha-d,n-d)$ is defined. The assumption $d \leq 2\alpha-n$ is equivalent to $\alpha' \leq n' \leq 2\alpha'$ where $\alpha'=\alpha-d$ and $n'=n-d$ [$d \leq 2\alpha-n$ yields $n' \leq 2\alpha'$, for  $2\alpha'=2\alpha-2d=2\alpha-d-d \geq 2\alpha-d+(n-2\alpha)=n-d=n'$]. So $a(\alpha-d,n-d)$ is defined. 
\end{proof}

\begin{theorem}\label{corollary a(alpha,n)}
$$a(\alpha,n)=2^{n-\alpha}$$ holds for each two positive integers $\alpha$ and $n$ satisfying $\alpha \leq n \leq 2\alpha$.
\end{theorem}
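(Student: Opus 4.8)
The plan is to combine Proposition~\ref{the new b(alpha)}, which gives $a(\alpha,2\alpha)=2^\alpha$, with Proposition~\ref{ignoring the intersection}, which says that $a(\alpha,n)=a(\alpha-d,n-d)$ whenever $-\infty<d\le 2\alpha-n$. The point of Proposition~\ref{ignoring the intersection} is that one may ``strip off'' the common intersection of an hke collection without changing its cardinality: any hke collection $F$ with $\alpha(F)=\alpha$ and $|\bigcup F|=n$ has $|\bigcap F|=2\alpha-n$, and removing a fixed subset of $\bigcap F$ of size $d$ produces an hke collection with the parameters shifted by $d$ but with the same number of members.

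The concrete computation is then immediate. Given positive integers $\alpha\le n\le 2\alpha$, set $d=n-\alpha$. Then $d\ge 0$, and $d\le 2\alpha-n$ is equivalent to $n-\alpha\le 2\alpha-n$, i.e. $2n\le 3\alpha$, which need \emph{not} hold in general; so a direct single application of Proposition~\ref{ignoring the intersection} with this $d$ is not quite legitimate. Instead I would choose $d$ so that the shifted parameters $\alpha'=\alpha-d$, $n'=n-d$ satisfy $n'=2\alpha'$, i.e. $n-d=2(\alpha-d)$, which gives $d=2\alpha-n$. This $d$ is a nonnegative integer (since $n\le 2\alpha$) and trivially satisfies $d\le 2\alpha-n$, so Proposition~\ref{ignoring the intersection} applies and yields
$$a(\alpha,n)=a(\alpha-(2\alpha-n),\,n-(2\alpha-n))=a(n-\alpha,\,2n-2\alpha)=a(n-\alpha,\,2(n-\alpha)).$$
Now apply Proposition~\ref{the new b(alpha)} with $\alpha$ replaced by $n-\alpha$ (which is a positive integer because $n\le 2\alpha$ forces... actually one must check $n-\alpha\ge 1$; if $n=\alpha$ the statement reads $a(\alpha,\alpha)=2^0=1$, which holds since an hke collection with $|\bigcup F|=\alpha=|A|$ for every $A\in F$ has a single member, so one should treat this boundary case by hand or note that $a(0,0)=1$ via the empty-union convention). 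For $n>\alpha$ we get $a(n-\alpha,2(n-\alpha))=2^{n-\alpha}$, and combining the two displayed equalities gives $a(\alpha,n)=2^{n-\alpha}$, as claimed.

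The main obstacle is the bookkeeping at the boundary $n=\alpha$, where $n-\alpha=0$ and Proposition~\ref{the new b(alpha)} is stated only for positive integers; this is handled by observing directly that $\alpha\le|\bigcup F|$ with equality forcing $\bigcup F=A$ for the unique $A\in F$, so $a(\alpha,\alpha)=1=2^{0}$. Apart from this, the proof is a two-line deduction: Proposition~\ref{ignoring the intersection} reduces $a(\alpha,n)$ to the case where the union has full size $2\alpha'$, and Proposition~\ref{the new b(alpha)} evaluates that case. Everything substantive (the constructions $F\mapsto\{A-C:A\in F\}$ preserving the hke property, and the counting $2^\alpha$ via Theorem~\ref{maximal iff has 2^alpha elements}) has already been done in the cited results.
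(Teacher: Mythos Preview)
Your proof is correct and follows exactly the paper's approach: apply Proposition~\ref{ignoring the intersection} with $d=2\alpha-n$ to reduce to $a(n-\alpha,2(n-\alpha))$, then invoke Proposition~\ref{the new b(alpha)}. Your false start with $d=n-\alpha$ is unnecessary, but your attention to the boundary case $n=\alpha$ (where $n-\alpha=0$ is not a positive integer, so Proposition~\ref{the new b(alpha)} does not literally apply) is actually more careful than the paper, which glosses over this point.
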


\begin{proof}
By Proposition \ref{ignoring the intersection} (where $2\alpha-n$ stands for $d$), we have $$a(\alpha,n)=a(n-\alpha,2n-2\alpha).$$ But by Proposition \ref{the new b(alpha)}, $$a(n-\alpha,2n-2\alpha)=2^{n-\alpha}.$$
\end{proof} 

Let $\lceil n \rceil$ denote the integral value of $n$.
\begin{corollary}\label{computing b(alpha) and c(n)}
Let $n$ be a positive integer. The maximal cardinality of an hke collection, $F$, with $|\bigcup F|=n$ is $2^{\lceil \frac{n}{2} \rceil}$. 
\end{corollary}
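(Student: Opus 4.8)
The plan is to express the quantity in question as a maximum of $a(\alpha,n)$ over the admissible values of $\alpha$, and then to read off that maximum from Theorem~\ref{corollary a(alpha,n)}. First I would observe that any hke collection $F$ with $|\bigcup F|=n$ carries a well-defined parameter $\alpha=\alpha(F)$, and since every hke collection satisfies $\alpha(F)\le|\bigcup F|\le 2\alpha(F)$, this $\alpha$ must be an integer with $n/2\le\alpha\le n$. For each such $\alpha$ the definition of $a(\alpha,n)$ gives $|F|\le a(\alpha,n)$, while $a(\alpha,n)$ is itself attained by some collection (it is a maximum over a nonempty family, as witnessed by Theorem~\ref{corollary a(alpha,n)}). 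Hence the maximal cardinality of an hke collection with $|\bigcup F|=n$ is precisely $\max\{a(\alpha,n):\ n/2\le\alpha\le n\}$.

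Next I would substitute $a(\alpha,n)=2^{n-\alpha}$ from Theorem~\ref{corollary a(alpha,n)}. Since $2^{n-\alpha}$ strictly decreases in $\alpha$, the maximum over the range $n/2\le\alpha\le n$ is achieved at the least admissible value of $\alpha$, namely the smallest integer not below $n/2$. The resulting maximal cardinality is $2^{n-\alpha}$ for this minimizing $\alpha$, and by the elementary identity $n-(\text{least integer}\ge n/2)=\lfloor n/2\rfloor$ this equals $2^{\lfloor n/2\rfloor}$. If one prefers an explicit extremal collection rather than merely invoking attainment of $a(\alpha,n)$, one may take the typical collection for $n-\alpha$ (which has $2^{n-\alpha}$ members and union of size $2(n-\alpha)$) and apply Proposition~\ref{6.4}(2) with $d=2\alpha-n$ to pad every set by a fixed block of $2\alpha-n$ fresh common elements; this yields an hke collection with $\alpha(F)=\alpha$, $|\bigcup F|=n$, and exactly $2^{n-\alpha}$ sets (for $n=1$ the claim is trivial, the single collection $\{\{x\}\}$ realizing the value $2^0$).

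It remains only to match this with the stated formula. Per the convention fixed immediately before the corollary, $\lceil\,\cdot\,\rceil$ denotes the integral (integer) part of its argument, so $\lceil n/2\rceil$ in the paper's notation is the ordinary floor $\lfloor n/2\rfloor$; under this reading $2^{\lceil n/2\rceil}=2^{\lfloor n/2\rfloor}$ and the corollary holds exactly as written. The whole argument is a short optimization riding on Theorem~\ref{corollary a(alpha,n)}, so I anticipate no genuine obstacle: the only points demanding care are the monotonicity of $2^{n-\alpha}$ together with the identity pinning down the minimizing $\alpha$, and the notational convention under which the ceiling symbol denotes the integer part.
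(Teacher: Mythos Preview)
Your proposal is correct and follows essentially the same route as the paper: both express the desired maximum as $\max\{a(\alpha,n):\alpha\le n\le 2\alpha\}$, substitute $a(\alpha,n)=2^{n-\alpha}$ from Theorem~\ref{corollary a(alpha,n)}, and minimize $\alpha$. Your write-up is slightly more careful than the paper's in two respects---you make the attainment explicit via the padded typical collection (Proposition~\ref{6.4}(2)), and you flag the nonstandard convention that $\lceil\,\cdot\,\rceil$ denotes the integer part---but the argument is the same.
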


\begin{proof}
Let $c(n)$ be the maximal cardinality of an hke collection, $F$, with $|\bigcup F|=n$. 

$$c(n)=max\{a(\alpha,n):\alpha \leq n \leq 2\alpha\}.$$

By Theorem \ref{corollary a(alpha,n)}, $a(\alpha,n)=2^{n-\alpha}$. So $a(\alpha,n)$ is maximal, when $\alpha$ is the minimal integer such that $n \leq 2\alpha$, or equivalently, $\alpha=\lceil \frac{n+1}{2} \rceil$. Therefore $c(n)=a(\lceil\frac{n+1}{2}\rceil,n)$. If $n=2k$ then $c(n)=a(\lceil\frac{n+1}{2}\rceil,n)=a(k,2k)=2^k$ and if $n=2k+1$ then $c(n)=a(\lceil\frac{n+1}{2}\rceil,n)=a(k+1,2k+1)=a(k,2k)=2^k$. In any case, $c(n)=2^{\lceil \frac{n}{2} \rceil}$.
\end{proof}

Connections between the current paper, \cite{critical}, \cite{criticallema} and \cite{criticalshort} should be studied.

\bibliographystyle{amsplain}
\bibliography{..//..//lit}






\end{document}